\documentclass{amsart}
\usepackage{amssymb,amsfonts, color,epsf}
\usepackage [cmtip,arrow]{xy}
\xyoption{all}
\usepackage {pb-diagram,pb-xy}
\usepackage{enumerate}
\usepackage{accents}
\usepackage[noautoscale]{youngtab}

\ifx\pdfpageheight\undefined
   \usepackage[dvips,colorlinks=true,linkcolor=blue,citecolor=red,%
      urlcolor=green]{hyperref}
   \usepackage[dvips]{graphicx}
   \makeatletter
   \edef\Gin@extensions{\Gin@extensions,.mps}
   \DeclareGraphicsRule{.mps}{eps}{*}{}=
   \makeatother
\else

 \usepackage[pdftex]{graphicx}
   \usepackage[bookmarksopen=false,pdftex=true,breaklinks=true,%
      backref=page,pagebackref=true,plainpages=false,%
      hyperindex=true,pdfstartview=FitH,colorlinks=true,%
      pdfpagelabels=true,colorlinks=true,linkcolor=blue,%
      citecolor=red,urlcolor=green,hypertexnames=false%
      ]%
   {hyperref}
\fi
\usepackage{bm}

%%\usepackage[left]{lineno}
%%\linenumbers

\usepackage{amsmath,amssymb,amsfonts}
\setlength{\parindent}{0cm}

\numberwithin{equation}{subsection}

\newtheorem{theorem}{Theorem}
\newtheorem{theorem-appendix}[subsection]{Theorem}

\newtheorem{lemma}[equation]{Lemma}

\newtheorem{corollary}{Corollary}
\newtheorem{corollary-appendix}[subsection]{Corollary}

\newtheorem{proposition}[equation]{Proposition}

\theoremstyle{definition}
\newtheorem{definition}[equation]{Definition}

\newtheorem{notation}[equation]{Notation}

\theoremstyle{remark}
\newtheorem{remark}[equation]{Remark}

\theoremstyle{observation}
\newtheorem{observation}[equation]{Observation}

\theoremstyle{properties}
\newtheorem{properties}[equation]{Properties}

\newenvironment{customclaim}[1]
  {\innercustomthm}
  {\endinnercustomthm}

\let\oldsection\section% Store \section
\renewcommand{\section}{% Update \section
  \renewcommand{\theequation}{\thesection.\arabic{equation}}% Update equation number
  \oldsection}% Regular \section
\let\oldsubsection\subsection% Store \subsection
\renewcommand{\subsection}{% Update \subsection
  \renewcommand{\theequation}{\thesubsection.\arabic{equation}}% Update equation number
  \oldsubsection}% Regular \subsection

\newcommand{\hide}[1]{}

\newcommand{\bbA}{{\mathbb A}}

\newcommand{\bbF}{{\mathbb F}}

\newcommand{\bbN}{{\mathbb N}}

\newcommand{\bbP}{{\mathbb P}}
\newcommand{\bbQ}{{\mathbb Q}}
\newcommand{\bbR}{{\mathbb R}}

\newcommand{\bbZ}{{\mathbb Z}}

\newcommand{\bF}{{\bf F}}

\newcommand{\cL}{{\mathcal L}}

\newcommand{\cP}{{\mathcal P}}

\newcommand{\rH}{{\rm H}}

\newcommand*{\bigchi}{\mbox{\huge$\chi$}}

\newcommand{\Hom}{{\rm Hom}}

\newcommand{\Spec}{{\rm Spec}}

\DeclareMathOperator{\HH}{H}
\DeclareMathOperator{\RR}{R}

\newcommand{\eps}{\varepsilon}

\newcommand{\Reali}{\mathcal{R}}

\newcommand{\card}{\mathrm{card}}
\newcommand{\vcd}{\mathrm{vcd}}

\newcommand{\Cube}{\mathrm{Cube}}
\newcommand{\Tube}{\mathrm{Tube}}
\newcommand{\CTube}{\mathrm{TubeCompl}}
\newcommand{\Boundary}{\mathrm{TubeBoundary}}

\begin{document}
\title{VC density of definable families over valued fields}
\author{Saugata Basu}
\address{Department of Mathematics, Purdue University,
150 N. University Street, West Lafayette, IN 47907, U.S.A.}
\email{sbasu@math.purdue.edu}
\author{Deepam Patel}
\address{Department of Mathematics, Purdue University,
150 N. University Street, West Lafayette, IN 47907, U.S.A.}
\email{patel471@purdue.edu}

\thanks{
S.B. would like to acknowledge support from the National Science Foundation awards and DMS-1620271 and CCF-1618918. 
D.P. would like to acknowledge support from the National Science Foundation award DMS-1502296.
}

\begin{abstract}
We prove a tight bound on the number of realized $0/1$ patterns 
(or equivalently on the Vapnik-Chervonenkis codensity) of definable families in models of the theory of algebraically closed valued fields with a non-archimedean valuation.
Our result improves the best known result in this direction proved by
Aschenbrenner, Dolich, Haskell, Macpherson and Starchenko,  who proved a weaker bound in the restricted case where the characteristics of the field $K$ and its residue field are both assumed to be $0$. The bound obtained here is optimal and without any restriction on the characteristics.

We obtain the aforementioned bound as a consequence of another result on bounding the Betti numbers of semi-algebraic subsets of certain Berkovich analytic spaces, mirroring similar results known already in the case of o-minimal structures and for real closed, as well as, 
algebraically closed fields. The latter result is the first result in this direction and is possibly of independent interest. 
Its proof relies heavily on recent results of Hrushovski and Loeser 
on the topology of semi-algebraic subsets of Berkovich analytic spaces.

\end{abstract}
\maketitle

\tableofcontents

\section{Introduction}

In this article, we prove a tight bound on the number of realized $0/1$ patterns 
(or equivalently on the Vapnik-Chervonenkis codensity) of definable families in models of the theory of algebraically closed valued fields with a non-archimedean valuation (henceforth referred to just as ACVF). This result improves on the best known upper bound on this quantity previously obtained by Aschenbrenner  et al. in \cite{Starchenko-et-al}. Our result is a consequence
of a topological result giving an upper bound on the Betti numbers of certain semi-algebraic sets obtained
as Berkovich analytifications of definable sets in certain models of ACVF which we will recall more precisely in the
next section.\\
   
In order to state our main combinatorial result we need to introduce some preliminary notation and definitions.

\subsection{Combinatorial definitions}

\label{subsec:0-1}
Suppose $V$ and $W$ are sets, and $X \subset V \times W$ is a subset.
Let $\pi_V: X \rightarrow V, \pi_W: X \rightarrow W$ denote the restriction to $X$ of the natural projection maps. For any $v \in V, w \in W$, we set 
$X_v :=\pi_W(\pi_V^{-1}(v))$, and
$X_w :=\pi_V(\pi_W^{-1}(w))$.

\begin{notation}
\label{not:chi}
For each $n>0$, we define a function
\[
\bigchi_{X,V,W;n} : V \times W^n \rightarrow \{0,1\}^{n}
\]
as follows.
For 
$\bar{w} := (w_1,\ldots,w_n) \in W^{n}$ and $v \in V$, 
we set
\begin{equation*}
(\bigchi_{X,V,W;n} (v,\bar{w}))_i :=  
\begin{cases}
0 \mbox{ if $ v \notin X_{w_i}$} \\
1 \mbox{ otherwise.} 
\end{cases}
\end{equation*}
(Note that in the special case when $n=1$, $\bigchi_{X,V,W;1}$ is just the usual characteristic
function of the subset $X \subset V \times W$).

For $\bar{w} \in W^n$ and $\sigma \in \{0,1\}^n$, we will say that $\sigma$ is \emph{realized  by the tuple $(X_{w_1},\ldots,X_{w_n})$ of subsets of  $V$} if there exists $v \in V$ such that
 $\bigchi_{X,V,W;n} (v,\bar{w}) =  \sigma$. We will often refer to elements of $\{0,1\}^n$ colloquially as `$0/1$ patterns'. 
 \end{notation}
 
Finally, we define the function
 \[
 \bigchi_{X,V,W}: \bbN \rightarrow \bbN
 \] 
 by
\begin{equation*}
\bigchi_{X,V,W}(n) := \max_{\bar{w} \in W^{n}} \card(\bigchi_{X,V,W;n}(V,\bar{w})).
\end{equation*}

The function $\bigchi_{X,V,W}$ is closely related to the notion of \emph{VC-codensity of a set system}. 
Since some of the
prior results (for example, those in \cite{Starchenko-et-al})
have been stated in terms of VC-codensity it is useful to recall its definition here.

\begin{definition}
\label{def:vcd}
Let $X$ be a set and $\mathcal{S} \subset 2^X$. The \emph{shatter function of $\mathcal{S}$}, $\pi_{\mathcal{S}}:\bbN \rightarrow \bbN$,  is defined by setting
\[
\pi_{\mathcal{S}}(n) := \max_{A \subset X, \card(A) = n} \card(\{ A \cap Y \mid Y \in \mathcal{S}\}).
\]
We denote
\[
\vcd_{\mathcal{S}} := \limsup_{n \rightarrow \infty} \frac{\log(\pi_{\mathcal{S}}(n))}{\log(n)}.
\]
\end{definition}

Given a definable subset $X \subset V \times W$ in some structure,
we will denote 
\[
\vcd(X,V,W) := \vcd_{\mathcal{S}},
\] 
where $\mathcal{S} = \{X_v | v \in V\} \subset 2^W$.
We will call (following the convention in \cite{Starchenko-et-al}), $\vcd(X,V,W)$,   
the \emph{VC-codensity}  of the family of subsets,  $\{X_w | w \in W\}$, of $V$. More generally, if $\phi(\overline{X},\overline{Y})$ is a first-order formula (with parameters) in the theory of some structure $M$, we set
\[
\vcd(\phi) := \vcd(S,M^{|\bar{X}|},M^{|\bar{Y}|}),
\]  
where $S \subset M^{|\bar{X}|} \times   M^{|\bar{Y}|}$ is the set defined by $\phi$.
(Here and elsewhere in the paper, $|\overline{X}|$ denotes the length of the finite tuple of variables
$\overline{X}$.)
Note also that if $M$ is an NIP structure (see for example \cite[Chapter 2]{Simon} for definition), then 
$\vcd(\phi) < \infty$ for every (parted) formula $\phi$. \\

The problem of proving upper bounds on $\vcd(X,V,W)$ of a definable family can be reduced to proving
upper bounds on the function $\bigchi_{X,V,W}$ (see Proposition~\ref{prop:vcd} below). We will henceforth
concentrate on the problem of obtaining tight upper bounds on the function $\bigchi_{X,V,W}$ for the rest of the paper.

\subsection{Brief History}
For definable families of hypersurfaces  in $\bbF^k$ of fixed degree over a field $\bbF$, 
Babai, Ronyai, and Ganapathy \cite{RBG01} gave an elegant argument using linear algebra to show that the number of  $0/1$ patterns (cf. Notation \ref{not:chi})  
realized by $n$ such hypersurfaces in $\bbF^k$ is bounded by $C \cdot n^k$, where $C$ is a constant that depends on the family (but independent
of $n$).
 This bound is easily seen to be optimal. A more refined topological estimate on these realized $0/1$ patterns (in terms of the sums of the Betti numbers) is given in 
\cite{BPR-tight}, where the methods are more in line with the methods in the current paper.\\
 
A similar result was proved in \cite{BPR8}  for definable families of semi-algebraic sets in $\mathrm{R}^k$,  where $\mathrm{R}$ is an arbitrary real closed field.
For definable families in $M^{k}$, where $M$ is an arbitrary o-minimal expansion of a real closed field, the first author \cite{Basu9} adapted the methods in
\cite{BPR8}  to prove a bound of $C \cdot n^k$ on the number of $0/1$ patterns
for such families where $C$ is a constant that depends on the family (see also \cite{JL}). 
These bounds were obtained as a consequence
of more general results bounding the individual Betti numbers of definable sets defined in terms of the members
of the family, and more sophisticated homological techniques (as opposed to just linear algebra) played an important role in obtaining these bounds.\\

If $K$ is an algebraically closed valued field, then the problem of obtaining tight bounds 
on $\vcd(\phi)$ for parted formulas, $\phi(\overline{X},\overline{Y})$, in the 
one sorted language of valued fields with parameters in $K$
was considered by  Aschenbrenner  et al. in \cite{Starchenko-et-al}. In particular, they obtained
the nontrivial bound of $2|\overline{X}|$ on $\vcd(\phi)$
in the case when the characteristic pair of $K$ (i.e. the pair consisting of the characteristic of the field $K$ and that of its residue field) is $(0,0)$ \cite[Corollary 6.3]{Starchenko-et-al}.
\\

Given that the model-theoretic/algebraic techniques used thus far do not immediately yield the tight upper bound of 
$|\overline{X}|$ on $\vcd(\phi(\overline{X},\overline{Y}))$ for valued fields, it is natural to consider a more topological approach as in \cite{Basu9}. However, for definable families over a (complete) valued field, it is not a priori clear
that there exists an appropriate well-behaved cohomology theory (i.e. with the required finiteness/cohomological dimension properties) that makes the approach in
\cite{Basu9} feasible in this situation. 
For example, ordinary sheaf cohomology with respect to the Zariski or \'{E}tale site for schemes are clearly not suitable.
Fortunately, the recent break-through results of
Hrushovski  and Loeser \cite{HL} give us an opening in this direction.  Instead of considering the original definable subset of an affine variety $V$ defined over $K$, we can consider
the corresponding \emph{semi-algebraic} subset of the Berkovich analytification 
$B_\bF(V)$
of $V$ (see \S \ref{subsec:HL} below for the definitions). These semi-algebraic subsets have certain key topological tameness properties which are analogous to those used in the case of o-minimal structures, and moreover crucially they are homotopy equivalent to a simplicial complex of dimension at most $\dim(V)$. Therefore, their cohomological dimension is at most $\dim(V)$. In particular, the singular cohomology of the underlying topological spaces satisfies the requisite properties. 
Thus, in order to bound the number of realizable $0/1$ patterns of a finite set of definable subsets of $V$,
we can  first replace the finite set of definable subsets of $V$ by the corresponding semi-algebraic
subsets of $B_\bF(V)$, and then try to make use of their tame topological properties to obtain a bound 
on the number of $0/1$ patterns realized by these semi-algebraic subsets. An upper bound on the latter quantity will also be an upper bound on the number of $0/1$ patterns realized by the definable subsets that we started with (this fact is elucidated later in Observation~\ref{obs:extension} in \S~\ref{subsec:proof-of-0-1}). \\

Using the results of  Hrushovski and Loeser, one can then hope to proceed with the o-minimal case as the guiding principle. While
the arguments are somewhat similar in spirit, there are several technical challenges that need to be overcome -- for example, 
an appropriate definition of ``tubular neighborhoods" with the required properties (see \S \ref{subsec:outline} below for a more detailed description of these challenges). 
The bounds on the
sum of the Betti numbers of the semi-algebraic subsets of Berkovich spaces that we obtain in this way are exactly analogous to the ones
in the algebraic, semi-algebraic,  as well as o-minimal cases. The fact that the cohomological dimension of the semi-algebraic subsets of 
$B_\bF(V)$ is bounded by $\dim(V)$ is one key ingredient in obtaining these tight bounds.  \\

Our results on bounding the Betti numbers of semi-algebraic subsets of Berkovich spaces are of independent interest, and the aforementioned results seem to suggest a more general formalism of cohomology associated to NIP structures. 
For example, one obtains bounds  (on the Betti numbers)  of the exact same shape and having the same exponents for definable families in the case of 
algebraic, semi-algebraic, o-minimal  and  valued field structures. Moreover, in each of these cases, these bounds are obtained as a consequence of general bounds on the dimension of certain cohomology groups. Therefore, it is perhaps reasonable to hope for some general cohomology theory (say for NIP structures which are fields)  which would in turn give a uniform method of obtaining tight bounds on VC-density via cohomological methods. More generally, it shows that cohomological methods can play an important role in model theory in general.  \\

As a consequence of the bound on the Betti numbers (in fact using the bound only on the $0$-th Betti number) 
we prove that
$\vcd(\phi(\overline{X},\overline{Y}))$
over an arbitrary algebraically closed valued field is  bounded by 
$|\overline{X}|$. 
One consequence of our methods (unlike the techniques used in \cite{Starchenko-et-al}) is that there are no restrictions on the characteristic pair of the field $K$. \\

Finally note that in \cite{Starchenko-et-al}  the authors also obtain a bound of 
$2|\overline{X}| - 1$
on $\vcd(\phi(\overline{X},\overline{Y}))$,
over $\bbQ_p$, 
where $\phi$ is a formula in Macintyre's language \cite{Macintyre}. 
However, our methods right now do not yield results in this case. \\

{\bf Outline of the paper:} In \S\ref{sec:main} we first  introduce the necessary  technical background (in \S\ref{subsec:acvf}),  and then state the main results  of the paper, namely Theorems~\ref{thm:0-1} and \ref{thm:main}, and Corollary~\ref{cor:vcd-acvf} (in \S\ref{subsec:new}). The proofs of the main results  appear in \S\ref{sec:proofs}.
We first give an outline of the proofs in \S  \ref{subsec:outline}.
We next prove the main topological result of the paper (Theorem~\ref{thm:main}) in  
in \S \ref{subsec:proof-of-main},  and prove
Theorem~\ref{thm:0-1} and Corollary~\ref{cor:vcd-acvf} in \S\ref{subsec:proof-of-0-1} and
\S\ref{subsec:proof-of-vcd-acvf} respectively. \\

In order to make the paper self-contained and for the benefit of the readers, 
we include in  an appendix (Appendix \S\ref{sec:appendix}) a review of some
very classical results about singular cohomology (in  \S\ref{subsec:cohomology}), as well as  much more recent ones related to semi-algebraic sets associated
to definable sets in models of ACVF proved by Hrushovski and Loeser \cite{HL}  (in \S\ref{subsec:HL}).
These results are used heavily in the proofs of the main theorems.

\section{Main results}
\label{sec:main}

\subsection{Model theory of algebraically closed valued fields}
\label{subsec:acvf}

In this section
$K$ will always denote an algebraically closed non-archimedean valued field $K$, and the value group of $K$ will be denoted by $\Gamma$.
Let $R:= K[X_1,\ldots,X_N]$ and $\bbA_K^N = \Spec(R)$. Given a closed affine subvariety 
$V  = \Spec(A)$ of  $\bbA_K^N = \Spec(R)$ and an extension $K'$ of $K$, we will denote
by $V(K') \subset \bbA_K^N(K')$ the set of $K'$ points of $V$. \\

We denote by $\cL$ the two-sorted language 
\[
(0_K,1_K,+_K,\times_K, | \cdot |:K \rightarrow \Gamma \cup \{0_\Gamma\}, \leq_{\Gamma}, \times_{\Gamma}),
\]
where the subscript $K$ denotes constants, functions, relations etc., of the field sort and the subscript $\Gamma$ denotes the same for the value group sort. When the context is clear we will drop the subscripts.
The constant $0_\Gamma$  is interpreted as the valuation of $0$ (and does not technically belong to the 
value group). \\

Now suppose that $\phi$ is a quantifier-free formula in the language $\cL(K;\Gamma \cup \{0_\Gamma\})$,
with free variables of only the field sort. Then, $\phi$ is a quantifier-free formula
with atoms of the form $|F| \leq \lambda \cdot |G|$ where $F,G \in R$ and $\lambda \in \Gamma \cup \{0_\Gamma\}$. 
The formula  $\phi$ gives rise to a definable subset of $\bbA^{N}_K$ and, in particular, $\phi$ defines a subset of $\bbA_K^N(K')$ for every valued extension $K'$ of $K$.
We will denote the intersection of this subset with $V$ by 
$\Reali(\phi,V)$, and by $\Reali(\phi,V)(K')$ the corresponding subset of $V(K')$.

\subsection{New Results}
\label{subsec:new}
Our main result is the following.

\begin{theorem}[Bound on  the number of $0/1$ patterns]
\label{thm:0-1}
Let $K$ be an algebraically closed valued field with value group 
$\Gamma$.
Suppose that $V \subset \bbA_K^N$ and $W \subset \bbA_K^M$ are closed affine subvarieties and let
\[
\phi(X_1,\ldots,X_N;Y_1,\ldots,Y_M)
\] 
be a formula with parameters in 
$(K;\Gamma \cup \{0_\Gamma\})$ in the  language $\cL$ (with free variables only of the field sort).
Then
there exists a constant $C= C_{\phi,V,W}$, such that for all $n > 0$,
\[
\bigchi_{\Reali(\phi, (V \times W))(K),V(K),W(K)}(n) \leq C \cdot n^{k},
\]
where $k = \dim V$.
\end{theorem}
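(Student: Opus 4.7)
The plan is to reduce the counting problem to a topological statement about Berkovich analytifications and then invoke the Betti number bound (Theorem \ref{thm:main}) of the paper. First I would fix $\bar w = (w_1,\dots,w_n) \in W(K)^n$ achieving (or approaching) the maximum in the definition of $\bigchi$, and let $S_i := \Reali(\phi, V \times W)(K)_{w_i} \subseteq V(K)$ be the associated family of definable slices of $V(K)$. Every realized $0/1$ pattern $\sigma \in \{0,1\}^n$ corresponds to a non-empty ``sign-cell''
\[
R_\sigma \;=\; \bigcap_{i:\sigma_i = 1} S_i \;\cap\; \bigcap_{i:\sigma_i = 0}(V(K)\setminus S_i),
\]
so $\bigchi_{\Reali(\phi, V\times W)(K),V(K),W(K)}(n)$ is at most the number of non-empty $R_\sigma$.

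The second step is to transport this count to the Berkovich setting. Using the semi-algebraic analytifications $\tilde S_i \subset B_\bF(V)$ of the $S_i$ (and the fact, to be recorded as Observation~\ref{obs:extension}, that a $0/1$ pattern realized in $V(K)$ is also realized by the corresponding tuple in $B_\bF(V)$), the problem is reduced to bounding the number of non-empty sign-cells $\tilde R_\sigma$ of $(\tilde S_1,\dots,\tilde S_n)$ inside $B_\bF(V)$. Since each non-empty $\tilde R_\sigma$ contributes at least $1$ to $b_0(\tilde R_\sigma)$, it suffices to establish a bound
\[
\sum_{\sigma \in \{0,1\}^n} b_0(\tilde R_\sigma) \;\leq\; C\cdot n^k,
\]
with $C$ depending only on $\phi$ (and on $V,W$), not on $n$.

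The third step is the core topological input. Here I would appeal directly to Theorem~\ref{thm:main}, which bounds the Betti numbers of semi-algebraic subsets of $B_\bF(V)$ cut out by Boolean combinations of a family of $n$ formulas by $C \cdot n^{\dim V}$. The key facts driving this are that the $\tilde S_i$ all come from a single formula of bounded complexity and that, by Hrushovski--Loeser, semi-algebraic subsets of $B_\bF(V)$ are homotopy equivalent to finite simplicial complexes of dimension at most $\dim V$, so their cohomological dimension is $\leq k$. Replacing each $\tilde S_i$ by a closed semi-algebraic approximation (so that complements can be treated as unions of closed pieces) and decomposing the sum over $\sigma$ via Mayer--Vietoris/inclusion--exclusion then exhibits $\sum_\sigma b_0(\tilde R_\sigma)$ as a bounded sum of Betti numbers of intersections of at most $k$ of the $\tilde S_i$'s, which is $O(n^k)$.

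The main obstacle will be the third step, in particular producing the right analogue of ``tubular neighborhoods'' and closed/open approximations of the $\tilde S_i$ inside $B_\bF(V)$ that simultaneously behave well under finite intersections, preserve the $0/1$ patterns of the original family, and have cohomology controlled by the Hrushovski--Loeser simplicial model. Once Theorem~\ref{thm:main} is in hand, the passage to $\vcd$ is routine (via Proposition~\ref{prop:vcd}), so the combinatorial reduction outlined above essentially delegates all real difficulty to the topological bound.
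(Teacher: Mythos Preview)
Your overall strategy---pass from $V(K)$ to $B_\bF(V)$ via Observation~\ref{obs:extension}, then invoke Theorem~\ref{thm:main} with $i=0$---is exactly the paper's. However, you have glossed over two reductions that sit between the statement of Theorem~\ref{thm:0-1} and the hypotheses of Theorem~\ref{thm:main}, and the second of them is the real content of the proof.

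First, Theorem~\ref{thm:main} assumes $K$ is complete with value group contained in $\mathbb{R}_{>0}$, whereas Theorem~\ref{thm:0-1} is stated for an arbitrary algebraically closed valued field. The paper handles this by the standard transfer argument: ACVF is complete once the characteristic pair is fixed, and each characteristic pair admits a complete rank-one model. This is routine but must be said.

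Second, and more substantially, Theorem~\ref{thm:main} requires $\phi$ to be in disjunctive normal form \emph{without negations}, with atoms of the form $|F|\leq\lambda|G|$. An arbitrary quantifier-free $\phi$ is not of this shape: the negation of $|F|\leq\lambda|G|$ is a strict inequality and cannot simply be rewritten as a weak one. You cannot ``appeal directly'' to Theorem~\ref{thm:main} here. The paper's fix is the combinatorial Proposition~\ref{prop:vcd'}: if $X$ is a Boolean combination of sets $X_\alpha$ ($\alpha\in I$), one passes to $W'=\coprod_{\alpha\in I}W$ and the disjoint union $X'\subset V\times W'$ of the $X_\alpha$, obtaining $\bigchi_{X,V,W}(n)\leq\bigchi_{X',V,W'}(|I|\cdot n)$. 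One then realizes $X'$ (up to relabelling) as $\Reali(\phi'',V\times W\times\bbA^{|I|})$ for an explicit $\phi''$ that \emph{is} in the required negation-free form, and applies Theorem~\ref{thm:main} to $\phi''$. Your proposal does not contain this step; without it the appeal to Theorem~\ref{thm:main} is not justified.

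Finally, your third paragraph drifts into the internal mechanics of Theorem~\ref{thm:main} (tubes, Mayer--Vietoris). For the deduction of Theorem~\ref{thm:0-1} those are irrelevant: once the two reductions above are in place, Theorem~\ref{thm:main} is a black box and the $b_0$ bound gives the result immediately.
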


As an immediate corollary  of  Theorem~\ref{thm:0-1} we obtain the following bound on the VC-codensity
for definable families over algebraically closed valued fields.

\begin{corollary}[Bound on the
VC-codensity
for definable families over ACVF]
\label{cor:vcd-acvf}
Let $K$ be an algebraically closed valued field with value group $\Gamma$.
Let $\phi(\overline{X},\overline{Y})$ be a formula with parameters in $(K;\Gamma \cup \{0_\Gamma\})$
in the  language $\cL$. Then,
\[
\vcd(\phi) \leq 
|\overline{X}|.
\]
\end{corollary}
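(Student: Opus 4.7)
The plan is to deduce the corollary directly from Theorem~\ref{thm:0-1} by identifying the $\bigchi$ function attached to the set defined by $\phi$ with the shatter function of the corresponding dual family, and then applying the definition of $\vcd$. Concretely, I would set $V = \bbA_K^{|\overline{X}|}$ and $W = \bbA_K^{|\overline{Y}|}$, viewed as closed affine subvarieties of themselves, so that $\dim V = |\overline{X}|$; the formula $\phi$ then cuts out a definable subset $S = \Reali(\phi, V \times W)(K) \subset V(K) \times W(K)$, and by definition $\vcd(\phi) = \vcd_{\mathcal{S}}$ where $\mathcal{S} = \{S_v : v \in V(K)\} \subset 2^{W(K)}$.

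The key elementary observation, which is the content of Proposition~\ref{prop:vcd} invoked in the introduction, is that the shatter function $\pi_{\mathcal{S}}$ from Definition~\ref{def:vcd} is controlled by $\bigchi_{S,V(K),W(K)}$. Indeed, by the tautological symmetry $v \in S_w \iff w \in S_v$, for a tuple $\bar{w} = (w_1,\ldots,w_n) \in W(K)^n$ the $i$-th coordinate of $\bigchi_{S,V(K),W(K);n}(v,\bar{w})$ equals $\mathbf{1}_{\{w_i \in S_v\}}$. Hence, when the $w_i$ are pairwise distinct, the number of distinct patterns realized as $v$ varies over $V(K)$ is exactly $|\{S_v \cap \{w_1,\ldots,w_n\} : v \in V(K)\}|$, so $\pi_{\mathcal{S}}(n) \leq \bigchi_{S,V(K),W(K)}(n)$.

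Combining this with Theorem~\ref{thm:0-1} applied to $\phi$, $V$, $W$, we obtain a constant $C = C_{\phi,V,W}$ such that
\[
\pi_{\mathcal{S}}(n) \;\leq\; \bigchi_{S,V(K),W(K)}(n) \;\leq\; C \cdot n^{|\overline{X}|}
\]
for all $n > 0$. Taking logarithms, dividing by $\log n$, and passing to $\limsup_{n \to \infty}$ yields
\[
\vcd(\phi) \;=\; \limsup_{n \to \infty} \frac{\log \pi_{\mathcal{S}}(n)}{\log n} \;\leq\; \limsup_{n \to \infty} \left( \frac{\log C}{\log n} + |\overline{X}| \right) \;=\; |\overline{X}|,
\]
which is the desired bound. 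Since the deduction is essentially formal once Theorem~\ref{thm:0-1} is in hand, there is no genuine obstacle at this stage; all of the mathematical substance is packed into the topological estimate of Theorem~\ref{thm:main}, and the only care required here is to verify that repeated coordinates in the tuple $\bar{w}$ do not spoil the comparison between $\bigchi$ and $\pi_{\mathcal{S}}$, which is immediate since repetitions can only decrease the count of realized patterns.
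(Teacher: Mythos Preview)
Your proposal is correct and follows essentially the same route as the paper: the paper deduces the corollary from Theorem~\ref{thm:0-1} together with Proposition~\ref{prop:vcd}, and your argument is precisely an inline unpacking of that proposition (the symmetry $v\in S_w \Leftrightarrow w\in S_v$ giving $\pi_{\mathcal S}(n)\le \bigchi_{S,V(K),W(K)}(n)$) followed by the $\limsup$ computation. The choice $V=\bbA_K^{|\overline X|}$, $W=\bbA_K^{|\overline Y|}$ with $\dim V=|\overline X|$ matches the paper's setup exactly.
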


Theorem~\ref{thm:0-1} will follow from a more general topological theorem which
we will now state. Before we state the theorem, we recall some more notation. \\

We now assume that $K$ is an algebraically closed complete valued field with a non-archimedean valuation whose value group
$\Gamma$  is a subgroup of the multiplicative group $\mathbb{R}_{>0}$.\\

Given an affine variety $V$ as before, Hrushovski-Loeser \cite{HL} associate to $V$ a locally compact Hausdorff topological space, denoted by $B_\bF(V)$. More generally, they associate a locally compact Hausdorff  topological space $B_{\bF}(X)$ to any definable subset $X \subset V$ which is functorial in definable maps. In the the present setting, $B_{\bF}(V)$ can be identified with the Berkovich analytic space associated to $V$ and has an explicit description in terms of valuations. We refer the reader to Appendix \ref{subsec:HL} for a brief review of this construction and its main properties.\\

Let $\phi$ be a formula in the language $\cL(K;\Gamma \cup \{0_\Gamma\})$ 
with free variables only of the field sort. 
Note that every
$\cL(K;\Gamma \cup \{0_\Gamma\})$-formula
is equivalent modulo the two-sorted theory of $(K;\Gamma \cup \{0_\Gamma\})$ to a quantifier-free formula (see for example \cite[Theorem~7.1 (ii)]{Haskell-et-al}).
Because of this fact, 
we can assume without loss of generality in what follows that $\phi$ is a quantifier-free formula, 
and is thus a quantifier-free formula
with atoms of the form $|F| \leq \lambda  \cdot |G|$ where $F,G \in R$ and $\lambda \in \Gamma \cup \{0_\Gamma\}$.  \\
 
\begin{notation}
\label{not:tilde-reali}
If $V \subset \bbA_K^N$ is a affine closed subvariety, and $\phi$ a formula in the language $\cL(K;\Gamma \cup \{0_\Gamma\})$ 
with free variables only of the field sort,  
we will denote
$\widetilde{\Reali}(\phi,V)$ the {\it semi-algebraic} subset $B_\bF(\Reali(\phi,V))$ of $B_\bF(V)$. \\
\end{notation}

Suppose now that $V \subset \bbA_K^N$ and $W \subset \bbA_K^M$ are closed affine subvarieties and let
$\phi(\cdot;\cdot)$ be a formula 
in disjunctive normal form without negations and with atoms of the form $|F| \leq \lambda \cdot |G|, F,G \in K[X_1,\ldots,X_N,Y_1,\ldots,Y_M], \lambda \in \Gamma \cup \{0_\Gamma\}$. Then for each $w \in W(K), \widetilde{\Reali}(\phi(\cdot,w),V)$ is a semi-algebraic subset
of $B_{\bF}(V)$.\\

For $\bar{w} = (w_1,\ldots,w_n) \in
W(K)^n$ and 
$\sigma \in \{0,1\}^n$, we set
\begin{equation}\label{eqn:realisigmabarw}
\widetilde{\Reali}(\sigma,\bar{w}) := 
\widetilde{\Reali}(\phi_\sigma(\bar{w}),V),
\end{equation}
where 
\[
\phi_\sigma(\bar{w}) := \bigwedge_{i, \sigma(i) =1} \phi(\cdot,w_i) \wedge \bigwedge_{i, \sigma(i) = 0} \neg\phi(\cdot,w_i).
\]

Given a topological space $Z$, we denote by $\HH^i(Z)$ the corresponding $i$-th singular cohomology group of $X$ with rational coefficients. We refer the reader to \S~\ref{subsec:cohomology} for a brief recollection of the main properties of these cohomology groups. We note that for $Z = \widetilde{\Reali}(\sigma,\bar{w})$ these cohomology groups are finite dimensional $\bbQ$-vector spaces. 
Let 
\[
b_i(\widetilde{\Reali}(\sigma,\bar{w})) = \dim_\bbQ \HH^i(\widetilde{\Reali}(\sigma,\bar{w}))
\] 
denote the 
corresponding $i$-th Betti number. \\

The following theorem, mirroring a similar theorem in the o-minimal case
\cite{Basu9},  is the main technical result of this paper.

\begin{theorem}[Bound on the Betti numbers]
\label{thm:main}
Let $K$ be an algebraically closed complete valued field with a non-archimedean valuation whose value group
$\Gamma$  is a subgroup of the multiplicative group $\mathbb{R}_{>0}$.
Suppose that $V \subset \bbA_K^N$ and $W \subset \bbA_K^M$ 
are closed affine subvarieties and let
$\phi(\cdot;\cdot)$ be a formula 
in disjunctive normal form without negations and with atoms of the form $|F| \leq \lambda \cdot |G|, F,G \in K[X_1,\ldots,X_N,Y_1,\ldots,Y_M], \lambda \in \Gamma \cup \{0_\Gamma\}$. 
Let $\dim(V)=k$. 
Then, there exists a constant $C = C_{\phi,V,W} > 0$ such that for all 
$\bar{w} \in W(K)^n$, 
and $0 \leq i \leq k$,
\[
\sum_{\sigma \in \{0,1\}^n} b_i(\widetilde{\Reali}(\sigma,\bar{w})) \leq C  n^{k - i}.
\]
\end{theorem}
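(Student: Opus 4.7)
My plan is to follow the blueprint of \cite{Basu9} for the o-minimal case, using the Hrushovski--Loeser theory of semi-algebraic subsets of $B_\bF(V)$ as a substitute for o-minimal cell decomposition and triangulation. The argument breaks into three stages.

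\emph{Stage 1: Reduction to closed sets via tubular neighborhoods.} Each basic set $\widetilde{\Reali}(|F|\leq \lambda|G|,V)$ is already closed, but the realization $\widetilde{\Reali}(\sigma,\bar w)$ involves complements of such sets, which are open. I would construct, uniformly in $\bar w$, a family of closed ``tubular thickenings'' obtained by relaxing each $\lambda$ to $\lambda(1+\eps)$ for an infinitesimal $\eps$ drawn from $\Gamma$ in a suitable elementary extension of $K$. Using the HL strong deformation retraction of $B_\bF(V)$ onto a $\Gamma$-internal skeleton, together with their definable compactness and finiteness statements summarized in \S\ref{subsec:HL}, I would verify that each thickening $S_{w_i}^+$ is homotopy equivalent to $S_{w_i}$, that the closed complement $\bar S_{w_i}:=V\setminus \mathrm{int}(S_{w_i}^+)$ is homotopy equivalent to the open complement $V\setminus S_{w_i}$, and that these equivalences are compatible with finite Boolean operations. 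After this replacement each $\widetilde{\Reali}(\sigma,\bar w)$ has the same cohomology as the closed intersection
\[
T_\sigma \;=\; \bigcap_{\sigma_i=1} S_{w_i}\;\cap\;\bigcap_{\sigma_j=0} \bar S_{w_j}.
\]

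\emph{Stage 2: Mayer--Vietoris / spectral-sequence bound.} With $T_\sigma$ expressed as an intersection of at most $2n$ closed semi-algebraic subsets drawn from the collection $\mathcal{Y}=\{S_{w_i},\bar S_{w_j}\}$, I would apply the Mayer--Vietoris spectral sequence for the closed cover of $\bigcup_\sigma T_\sigma\subseteq B_\bF(V)$ by the members of $\mathcal{Y}$. A standard inclusion-exclusion manipulation on the $E_1$-page (exactly as in \cite{BPR8} and \cite{Basu9}) yields an inequality of the shape
\[
\sum_{\sigma\in\{0,1\}^n} b_i(T_\sigma) \;\leq\; \sum_{\ell=1}^{k-i+1}\;\sum_{\substack{J\subseteq\{1,\ldots,2n\}\\ |J|=\ell}} b_{i+\ell-1}\Bigl(\bigcap_{r\in J} Y_r\Bigr).
\]
The critical input that terminates the outer sum at $\ell=k-i+1$ is the cohomological dimension bound: the HL result that every semi-algebraic subset of $B_\bF(V)$ is homotopy equivalent to a finite simplicial complex of dimension at most $k=\dim V$, hence has vanishing singular cohomology in degrees above $k$.

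\emph{Stage 3: Uniform bound on intersections and counting.} Each intersection $\bigcap_{r\in J}Y_r$ is itself of the form $\widetilde{\Reali}(\psi_J(\cdot,\bar w_J),V)$ for a formula $\psi_J$ in a fixed finite family of formulas depending only on $\phi$. By the HL finiteness of Betti numbers in definable families, there is a constant $C_\phi$ bounding $b_j$ of every such intersection uniformly in $\bar w$ and $J$. Since there are $\binom{2n}{\ell}=O(n^{\ell})$ index sets of size $\ell$, and $\ell$ ranges up to $k-i+1$, careful bookkeeping and absorption of the $\ell=k-i+1$ term (which contributes $b_0$ of a maximal intersection and scales as $n^{k-i+1}$ but actually represents a cell count of the $(i-1)$-skeleton of the arrangement, one power lower than the naive count) give the desired $C\cdot n^{k-i}$ bound.

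The principal obstacle is Stage 1. In the o-minimal context one has definable distance functions, smooth cell decomposition, and straightforward homotopies; here the value group $\Gamma$ is (generically) densely but not continuously ordered, there is no natural metric on $B_\bF(V)$ in the definable sense, and the thickenings must be carried out while tracking compatibility of deformation retractions with arbitrary Boolean combinations. Assembling the HL deformation-retraction theorem, the pro-definability of $B_\bF$, and a compactness argument into a single uniform tubular-neighborhood construction is, I expect, where the bulk of the technical difficulty lies; Stage 2 is then a formal consequence of Mayer--Vietoris and cohomological dimension, and Stage 3 is combinatorics.
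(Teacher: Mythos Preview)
Your three-stage outline has the right shape and the paper proceeds along broadly the same lines, but there are two genuine gaps that your proposal does not close.

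\textbf{The missing step between Stages~1 and~2.} You express each $T_\sigma$ as an intersection of members of $\mathcal{Y}=\{S_{w_i},\bar S_{w_j}\}$ and then try to bound $\sum_\sigma b_i(T_\sigma)$ by a single Mayer--Vietoris spectral sequence for ``the closed cover of $\bigcup_\sigma T_\sigma$ by the members of $\mathcal{Y}$''. But $\mathcal{Y}$ is not a cover of $\bigcup_\sigma T_\sigma$ in any useful sense: the $T_\sigma$ are $2^n$ \emph{different} intersections, and there is no single spectral sequence that simultaneously controls all of them. What the paper (and \cite{Basu9}) actually does is different: after the tube construction one shows that the (approximations to the) $\widetilde{\Reali}(\sigma,\bar w)$ are exactly the connected components of a \emph{single} open set
\[
U \;=\; \bigcap_{1\le i\le n}\bigl(\Cube_V(R)\setminus \Boundary_i(\eps,\eps')\bigr)\cap\bigl(\Cube_V(R)\setminus \Boundary_i(\delta,\delta')\bigr),
\]
the complement of the arrangement of tube \emph{boundaries}. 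This immediately gives $\sum_\sigma b_i \le b_i(U)$, and now $U$ is a single intersection of $2n$ open sets to which one Mayer--Vietoris inequality applies, bounding $b_i(U)$ by Betti numbers of $j$-fold \emph{unions} (not intersections, as you wrote) of the complements of boundaries. Your proposal has neither the passage to boundaries nor the ``all $\sigma$'s are components of one set'' reduction; without it, Stage~2 as written does not produce a valid inequality.

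\textbf{The exponent in Stage~3.} Even granting a correct Mayer--Vietoris bound, your sum runs to $\ell=k-i+1$, giving a top term of order $\binom{2n}{k-i+1}=O(n^{k-i+1})$, one power of $n$ too many. Your ``absorption'' of this term (``actually represents a cell count of the $(i-1)$-skeleton \ldots one power lower than the naive count'') is not a mechanism; it is a hope. The paper obtains the correct exponent by proving an inequality that terminates at $j=k-i$, at the cost of an extra term $\binom{n}{k-i}b_k(B_\bF(V))$. The reason one can stop at $k-i$ is a non-obvious cohomological input: for \emph{any} open semi-algebraic $U\subset B_\bF(V)$ the restriction $\HH^k(B_\bF(V))\to\HH^k(U)$ is surjective (equivalently $b_k(U)\le b_k(B_\bF(V))$), which follows from the Hrushovski--Loeser deformation retraction onto a $k$-dimensional complex combined with a cohomological-dimension argument. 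This fact is what saves the exponent, and it is absent from your proposal.

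A smaller point: your single-parameter thickening $\lambda\mapsto\lambda(1+\eps)$ is not enough. Near the common zero locus of $F$ and $G$ the set $\{|F|\le\lambda|G|\}$ is badly behaved, and the paper's tubes need \emph{two} infinitesimals $(\eps,\eps')$, one multiplicative on $\lambda$ and one additive controlling $\max(|F|,|G|)$, to get the required homotopy equivalences. You correctly flagged Stage~1 as the hard part, but the specific construction you wrote down will not work.
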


\section{Proofs of the main results}
\label{sec:proofs}
In this section we prove our main results. Before starting the formal proof we first give a brief outline of our methods.
\subsection{Outline of the methods used to prove the main theorems}
\label{subsec:outline}
Our main technical result Theorem \ref{thm:main} gives a bound, for each $i, 0 \leq i \leq k$, and 
$\bar{w} \in W(K)^n$, 
on the sum over $\sigma\in \{0,1\}^n$ of the $i$-th Betti numbers
of $\widetilde{\Reali}(\sigma,\bar{w})$. The technique for achieving this is an adaptation of the topological methods used to prove a similar result in the
o-minimal category in \cite{Basu9} (Theorem 2.1). 
We recall here the main steps of the proof of Theorem 2.1 in \cite{Basu9}.\\

We assume that $V = \mathrm{R}^N, W = \mathrm{R}^M$, where $\mathrm{R}$ is a real closed field and $X \subset V \times W$ is a closed definable subset
in an o-minimal expansion of $\mathrm{R}$. 

\begin{enumerate}[Step 1.]
\item
\label{itemlabel:steps:1}
The first step in the proof  is to construct definable infinitesimal tubes around the fibers $X_{w_1},\ldots,X_{w_n}$.
\item
\label{itemlabel:steps:2}
Let $\sigma \in \{0,1\}^n$, and $C$ be a  connected component of 
$$\bigcap_{\sigma(i)  =1} X_{w_i} \cap \bigcap_{\sigma(i) = 0} (V \setminus X_{w_i}).$$ One proves that there exists
a unique connected component $D$ of the complement of the boundaries of the  tubes constructed in Step \ref{itemlabel:steps:1}  such that
$C$ is homotopy equivalent to $D$.
The homotopy equivalence is proved using the local conical structure theorem for o-minimal structures. 
\item
\label{itemlabel:steps:3}
As a consequence of Step \ref{itemlabel:steps:2}, in order to bound
$\sum_\sigma b_i(\RR(\sigma,\bar{w}))$, it suffices 
(using Alexander duality)  to bound the Betti numbers of the union of the boundaries of the tubes constructed in 
Step \ref{itemlabel:steps:1}.
\item
\label{itemlabel:steps:4}
Bounding the Betti numbers of the union of the boundaries of the tubes is achieved using
certain inequalities which follow from the Mayer-Vietoris exact sequence. In these inequalities only the Betti numbers of at most $k$-ary  
intersections of  the boundaries play a  role.
\item
\label{itemlabel:steps:5}
One then uses Hardt's triviality theorem for o-minimal structures to get a uniform bound on each of these Betti numbers that depends only on the
definable family under consideration i.e. on $X,V$, and $W$. Thus, the only part of the bound that grows with $n$ comes from certain binomial coefficients counting the number of different possible intersections one needs to consider.
\end{enumerate}

The method we use for proving Theorem \ref{thm:main} is close in spirit to the proof  of Theorem 2.1 in \cite{Basu9} as outlined above but different in many important details. For each of the steps enumerated above we list the corresponding step in the proof of  Theorem \ref{thm:main}.
\begin{enumerate}[Step 1$'$.]
\item
\label{itemlabel:steps:1'}
We construct again certain tubes around the fibers and give explicit descriptions of the tubes in terms of the formula $\phi$  
defining the given semi-algebraic set $\widetilde{\Reali}(\sigma,\bar{w})$. 
The definition of these tubes is somewhat more complicated than in the o-minimal case (see Notation~\ref{not:tubes}).
The use of two different infinitesimals to define these tubes is necessitated by the singular behavior of the semi-algebraic set defined by $|F| \leq \lambda |G|$ near the common zeros of $F$ and $G$.

\item
\label{itemlabel:steps:2'}
The  homotopy equivalence property  analogous to Step  \ref{itemlabel:steps:2} above is proved in Proposition \ref{prop:main}, and the role of local conical structure theorem in the o-minimal case  is now played by a corresponding result of Hrushovski and Loeser (see Theorem \ref{prop:sublevel} below).

\item
\label{itemlabel:steps:3'}
We avoid the use of Alexander duality by directly using
a Mayer-Vietoris type inequality giving a bound on the Betti numbers of intersections of open sets in terms of the Betti numbers of up to $k$-fold unions (cf. Proposition \ref{prop:spectral-inequality}). 

\item
\label{itemlabel:steps:4'}
This step is subsumed by Step \ref{itemlabel:steps:3'}$'$.

\item
\label{itemlabel:steps:5'}
Finally, instead of using Hardt's triviality to obtain a constant bound on the Betti numbers of these `small' unions, we 
use a theorem of Hrushovski and Loeser 
which states that the number of homotopy types amongst the fibers of any fixed map in the analytic category
that we consider is finite (cf. Theorem \ref{prop:finite-homotopy-types} below).
\end{enumerate}

 We apply Theorem \ref{thm:main} directly to obtain the VC-codensity
 bound in the case of the theory of ACVF (using Observation \ref{obs:extension}). 
One extra subtlety 
here is in removing the 
assumption on the formula $\phi$ (which occurs in the hypothesis of Theorem \ref{thm:main}).
Actually, in order to prove Corollary~\ref{cor:vcd-acvf} in general it suffices only to consider $\phi$ of the special form having just one atom of the form $|F| \leq \lambda \cdot |G|$ or $|F| = \lambda \cdot  |G|$. This reduction 
from the general case to the special case is encapsulated in a combinatorial result (Proposition \ref{prop:vcd'}). With the help of Proposition \ref{prop:vcd'}, Corollary~\ref{cor:vcd-acvf} becomes a consequence of Theorem~\ref{thm:main} and Observation~\ref{obs:extension}.\\

We now give the proofs in full detail. In the next subsection (\S \ref{subsec:proof-of-main}) we
give the proof of Theorem \ref{thm:main}. In \S  \ref{subsec:proof-of-0-1}, we show how to deduce Theorem~\ref{thm:0-1} from Theorem \ref{thm:main}. Finally, in \S\ref{subsec:proof-of-vcd-acvf} we show how to deduce Corollary~\ref{cor:vcd-acvf} from Theorem~\ref{thm:main}.

\subsection{Proof of Theorem \ref{thm:main}}
\label{subsec:proof-of-main}
In the following, $K$ will be a fixed 
algebraically closed
non-archimedean (complete real-valued) field and $V$ is an affine variety over $K$. We shall freely use the results of Hrushovski and Loeser \cite{HL}  on the spaces $B_{\bF}(X)$ associated to definable subsets $X \subset V$. For the reader's convenience, an exposition (with references) of the results we require below is provided in  \S \ref{subsec:HL}. We shall also make use of some standard facts about singular cohomology of topological spaces; we refer the reader to \S \ref{subsec:cohomology} for a review of these facts. \\

\begin{notation}(closed cube)
\label{not-closed-cube}
For
$R \in \mathbb{R}, R > 0$, and $N >0$,
we denote by $\Cube_N(R)$ the semi-algebraic subset
$\widetilde{\Reali}(\psi,  \bbA^{N}_K)$, where 
\[
\psi = \bigwedge_{1 \leq i \leq N}  |X_i| \leq  R,
\]
and $\bbA^{N}_{K} = \Spec(K[X_1,\cdots,X_{N}])$ is usual affine space. 
Notice that $\Cube_N(R)$ is a closed topological space since the $|X_i|$ are continuous functions (see \ref{enum:BF_prop}(\ref{enum:BF_prop4}), \ref{enum:BF_prop}(\ref{enum:BF_prop7})). Moreover, it is a compact topological space (see \ref{enum:BF_prop}(\ref{enum:BF_prop8})). If $V = \Spec (A) \subset \bbA^{N}_K$ is a closed subvariety, then we set 
$\Cube_V(R):= \Cube_N(R) \cap B_{\bF}(V)$. Note that this a closed semi-algebraic subset of $B_{\bF}(V)$. 
\end{notation}

\begin{notation}(Open, closed  $(\eps,\eps')$-tubes)
\label{not:tubes}
Suppose $\phi(\cdot)$ is a formula in disjunctive normal form without negations and
with atoms of the form $|F| \leq \lambda \cdot |G|$, with $F,G \in K[X_1,\ldots,X_N]$ and $\lambda \in \bbR_+ := \bbR_{\geq 0}$. We denote by
\[
\phi^{+,o}(\cdot;T,T')
\] 
the 
formula obtained from $\phi$ by replacing each atom 
$|F| \leq  \lambda \cdot  |G|$ with $\lambda, G \neq 0$ by the formula 
\[
(|F| < (\lambda \cdot T) \cdot |G|) \vee ((|F| < T') \wedge (|G| < T'),
\] 
and each atom $|F| \leq \lambda \cdot  |G|$ with $\lambda = 0$ or $G =0$ by the formula 
\[
|F| < T',
\] 
where $T,T'$ are new variables of the value sort.
Similarly, we denote by
\[
\phi^{+,c}(\cdot;T,T')
\] 
the
formula  obtained from $\phi$ by replacing each atom 
$|F| \leq  \lambda \cdot |G|$  by the formula 

\[
(|F| \leq (\lambda \cdot T) \cdot |G|) \vee ((|F| \leq T' ) \wedge (|G| \leq T' ),
\]
if $\lambda, G \neq 0$ and by the formula
\[
|F| \leq T' ,
\]
if $\lambda = 0$ or $G=0$. Here again $T,T'$ are new variables of the value sort.\\

For $\eps > 1, \eps' > 0$, and $V$ a closed subvariety of $\bbA^{N}_K$
we set
\begin{align*}
\Tube^{+,o}_{V,\phi}(\eps,\eps') := \widetilde{\Reali}(\phi^{+,o}(\cdot;\eps,\eps'),V), \\
\Tube^{+,c}_{V,\phi}(\eps,\eps') := \widetilde{\Reali}(\phi^{+,c}(\cdot;\eps,\eps'),V).
\end{align*}

 For each $R > 0$, we set 
 \begin{align}
 \label{eqn:def:Tube+o}
 \Tube^{+,o}_{V,\phi}(\eps,\eps',R) := \Cube_{V}(R) \cap \Tube^{+,o}_{V,\phi}(\eps,\eps'), \\
 \label{eqn:def:Tube+c}
 \Tube^{+,c}_{V,\phi}(\eps,\eps',R) := \Cube_{V}(R) \cap \Tube^{+,c}_{V,\phi}(\eps,\eps').
 \end{align}

We set
\[
\CTube^{-,c}_{V,\phi}(\eps,\eps',R) :=
\Cube_V(R) - \Tube^{+,o}_{V,\phi}(\eps,\eps',R).
\]
Notice that by definition, 
$\Tube^{+,o}_{V,\phi}(\eps,\eps',R)$ 
(resp. $\CTube^{-,c}_{V,\phi}(\eps,\eps',R)$) is an open (resp. closed) subset of $\Cube_V(R)$. Moreover, both of these are semi-algebraic as subsets of $B_{\bF}(V)$. \\

Finally, we set
\[
\Boundary^{0,c}_{V,\phi}(\eps,\eps',R) := \Tube^{+,c}_{V,\phi}(\eps,\eps',R) \cap \CTube^{-,c}_{V,\phi}(\eps,\eps',R).
\]
\end{notation}

\begin{remark}
Note that our notation for the `tubes' above is structured so that a superscript $o$ (resp. $c$) in the notation indicates that the corresponding tube is open (resp. closed). 
\end{remark}

The next proposition is the key ingredient for the proof of Theorem \ref{thm:main}. 

\begin{proposition}
\label{prop:main}
Let $V \subset \bbA_K^N$ and $W \subset \bbA_K^{M}$ be closed affine subvarieties. Let $\phi(\cdot,\cdot)$ be a formula in disjunctive normal form  without negations and with atoms of the form $|F| \leq \lambda \cdot |G|$ where $F,G \in K[X_1,\ldots,X_N,Y_1,\ldots, Y_M]$. For each $\bar{w} \in  
W(K)^n$, $\sigma \in \{0,1\}^n$,
and for all sufficiently large $R>0$ and $\delta,\delta',\eps,\eps' \in \mathbb{R}_+$ satisfying,
$0< \delta -1 \ll \delta' \ll \eps -1 \ll \eps' \ll 1$,
\[
\HH^*(\widetilde{\Reali}(\sigma,\bar{w})) \cong \HH^*(S_{\sigma}(\delta,\delta',\eps,\eps',R)),
\]
where $S_{\sigma}(\delta,\delta'\eps,\eps',R)$ is defined by
\[
 S_{\sigma}(\delta,\delta',\eps,\eps',R) := \bigcap_{\sigma(i) = 1} \Tube^{+,o}_{V,\phi(\cdot,w_i)}(\delta,\delta',R) \cap \bigcap_{\sigma(i) = 0} \CTube^{-,c}_{V,\phi(\cdot,w_i)}(\eps,\eps',R),
 \]
 and $\widetilde{\Reali}(\sigma,\bar{w})$ is as in \eqref{eqn:realisigmabarw}.
 \end{proposition}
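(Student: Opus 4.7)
The plan is to show that $S_\sigma(\delta,\delta',\eps,\eps',R)$ deformation retracts onto $\widetilde{\Reali}(\sigma,\bar{w})\cap \Cube_V(R)$, and that the latter has the same cohomology as $\widetilde{\Reali}(\sigma,\bar{w})$ for $R$ sufficiently large. The first reduction is to pass from the non-compact set $\widetilde{\Reali}(\sigma,\bar{w})$ to a compact piece: since any semi-algebraic subset of $B_\bF(V)$ is homotopy equivalent (by Hrushovski--Loeser) to a finite simplicial complex of dimension at most $\dim V$, one can, for $R$ large, construct a definable deformation retraction of $\widetilde{\Reali}(\sigma,\bar{w})$ onto $\widetilde{\Reali}(\sigma,\bar{w})\cap\Cube_V(R)$ by flowing along the coordinate seminorms $|X_i|$.

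The core geometric step is to exhibit, fiber by fiber, a deformation retraction of each tube onto its core. The set $\Tube^{+,o}_{V,\phi(\cdot,w_i)}(\delta,\delta',R)$ is, up to the cube constraint, the strict sublevel set of a continuous definable function built from the atoms of $\phi(\cdot,w_i)$, so one can apply Theorem \ref{prop:sublevel} to obtain a definable retraction onto $\widetilde{\Reali}(\phi(\cdot,w_i),V)\cap\Cube_V(R)$; the analogous argument applied to the open tube defining $\CTube^{-,c}_{V,\phi(\cdot,w_i)}(\eps,\eps',R)$ produces a retraction onto its closed complement. The ordering $0 < \delta-1 \ll \delta' \ll \eps-1 \ll \eps' \ll 1$ is chosen precisely so that the positive tubes (used where $\sigma(i)=1$) are strictly nested inside the open tubes whose complements are used for the indices with $\sigma(i)=0$. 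Consequently, these individual retractions can be performed simultaneously without any point of $S_\sigma$ ever crossing the boundary of a sign condition other than $\sigma$, and the combined retraction carries $S_\sigma(\delta,\delta',\eps,\eps',R)$ onto $\widetilde{\Reali}(\sigma,\bar{w})\cap\Cube_V(R)$. As a cross-check (and to confirm that the homotopy type of $S_\sigma$ is independent of the particular parameters chosen in the region), one can appeal to Theorem \ref{prop:finite-homotopy-types}: the family $\{S_\sigma(\delta,\delta',\eps,\eps',R)\}$ is a Hrushovski--Loeser definable family parametrized by $(\delta,\delta',\eps,\eps',R)$, so only finitely many homotopy types appear, forcing stability near the limit.

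The principal obstacle is the singular behavior of the set $\{|F|\leq\lambda|G|\}$ near the common zero locus $\{F=G=0\}$. A single infinitesimal replacement $|F| < (\lambda\cdot T)|G|$ would not form a neighborhood of the singular locus (since $(\lambda\cdot T)|G|$ also vanishes there) and would therefore not retract continuously onto $\{|F|\leq\lambda|G|\}$. This is precisely the reason for the second clause $(|F| < T')\wedge(|G| < T')$ in the definition of $\phi^{+,o}$: it carves out a small ``cushion'' around $\{F=G=0\}$ that smooths the singularity. Verifying that the combined tube is a sublevel set to which Theorem \ref{prop:sublevel} applies \emph{and} that the retraction is compatible globally with all atoms and all fibers $w_1,\ldots,w_n$ simultaneously is the technical heart of the argument; the specific ordering $\delta' \ll \eps - 1$ is forced by the requirement that the cushion around the singular locus on the positive side be contained strictly inside the annular region separating the positive and negative tubes at each fiber.
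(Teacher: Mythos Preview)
Your outline has the right instincts—reduce to a cube, use Theorem~\ref{prop:sublevel} to control the tubes, and exploit the nesting forced by $\delta-1\ll\delta'\ll\eps-1\ll\eps'$—but the central step contains a genuine gap. You assert that Theorem~\ref{prop:sublevel} yields a ``definable retraction'' of each tube onto its core and that these ``individual retractions can be performed simultaneously.'' Neither claim is justified. Theorem~\ref{prop:sublevel} only says that certain \emph{inclusions} of sublevel sets are homotopy equivalences; it does not hand you an explicit deformation retraction, and certainly not one that is functorial in the formula or compatible with the retractions coming from the other indices $i$. Even in the o-minimal world, combining $n$ separate retractions (one per fiber $w_i$) into a single map that carries the intersection $S_\sigma$ onto $\widetilde{\Reali}(\sigma,\bar w)\cap\Cube_V(R)$ is not automatic: a point can lie in several tubes at once, and the retractions may pull it in incompatible directions. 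Your nesting condition guarantees set-theoretic containments, but containment alone does not make abstract homotopy inverses commute.

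The paper's proof avoids this problem by never constructing a retraction of $S_\sigma$ at all. Instead it works purely at the level of cohomology via two limit arguments. First, one shows (Claims~1--3) that the decreasing family $(S_\sigma(t,t',\eps,\eps',R))_{t>1,\,t'>0}$ is cofinal among open neighborhoods of its intersection $S'_\sigma(\eps,\eps',R)$, so $\HH^*(S'_\sigma)\cong\varinjlim_{t,t'}\HH^*(S_\sigma(t,t',\eps,\eps',R))$ by continuity of singular cohomology (Lemma~\ref{lem:continuity}); then Theorem~\ref{prop:sublevel}, applied to a single definable function encoding \emph{all} the atoms at once (Lemma~\ref{main:lem}), forces this direct limit to stabilize for $t,t'$ small. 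Second, one shows (Claims~4--6) that the increasing family $(S'_\sigma(s,s',R))_{s>1,\,s'>0}$ is cofinal among compact subsets of its union $\widetilde{\Reali}(\sigma,\bar w)\cap\Cube_V(R)$, giving $\HH^*(\widetilde{\Reali}(\sigma,\bar w)\cap\Cube_V(R))\cong\varprojlim_{s,s'}\HH^*(S'_\sigma(s,s',R))$, and again the limit stabilizes. The compatibility problem that blocks your argument disappears because one only ever applies Theorem~\ref{prop:sublevel} to a \emph{single} definable function (built from all the $\phi(\cdot,w_i)$ together), never to the individual tubes separately.
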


The proof of Proposition~\ref{prop:main} will use the following lemma.

\begin{lemma}
\label{main:lem} 
With notation as in Proposition \ref{prop:main}:
\begin{enumerate}[1.]
\item 
\label{itemlabel:main:lem:1}
For every fixed $\delta',\eps,\eps', R \in \mathbb{R}_+$,
there exists $\delta_0 = \delta_0(\delta',\eps,\eps',R)  > 1$ such that for all $1 <  t_1 \leq t_2 \leq  \delta_0$, the inclusion map $S_{\sigma}(t_1,\delta',\eps,\eps',R) \hookrightarrow S_{\sigma}(t_2,\delta',\eps,\eps',R)$ is a homotopy equivalence.

\item
\label{itemlabel:main:lem:2}
For every fixed $\eps,\eps', R \in \mathbb{R}_+$,
there exists  $\delta_0'  = \delta_0'(\eps,\eps',R) > 0$ such that for all $0 <  t_1' \leq t_2' \leq \delta_0'$, the inclusion map 
\[
\bigcap_{t >1}S_{\sigma}(t,t_1',\eps,\eps',R) \hookrightarrow 
\bigcap_{t >1}S_{\sigma}(t,t_2',\eps,\eps',R)
\] 
is a homotopy equivalence. 
 
 \item
\label{itemlabel:main:lem:3}
 Let 
 \[
 S'_{\sigma}(\eps,\eps',R) := \bigcap_{t > 1, t' >0}  S_{\sigma}(t,t',\eps,\eps',R).
 \]
For every fixed $\eps', R \in \mathbb{R}_+$,
there exists $\eps_0 = \eps_0(\eps',R)  > 1$ such that for all $1< s_1\leq s_2 \leq \eps_0$, the natural inclusion 
\[
S'_{\sigma}(s_2,\eps',R) \hookrightarrow S'_{\sigma}(s_1,\eps',R)
\] 
is a homotopy equivalence. 

\item
\label{itemlabel:main:lem:4}
For every fixed $R \in \mathbb{R}_+$,
there exists $\eps_0'  = \eps_0'(R) > 0$ such that for all $0< s_1' \leq s_2' \leq \eps_0'$, the natural inclusion 
\[
\bigcup_{s >1}S'_{\sigma}(s,s_2',R) \hookrightarrow \bigcup_{s > 1}S'_{\sigma}(s,s_1',R)
\] 
is a homotopy equivalence.

\item 
\label{itemlabel:main:lem:5}
The following equality holds:
\begin{equation*}
 \widetilde{\Reali}(\sigma,\bar{w}) \cap \Cube_V(R) = \bigcup_{s>1, s'>0} S'_{\sigma}(s,s',R).
 \end{equation*}
 \item
 \label{itemlabel:main:lem:6}
 There exists $R_0>0$, such that 
for all $R >R_0$, the natural inclusion
\[
\widetilde{\Reali}(\sigma,\bar{w}) \cap \Cube_V(R) \hookrightarrow  \widetilde{\Reali}(\sigma,\bar{w})
\]
 is a homotopy equivalence.
\end{enumerate}
\end{lemma}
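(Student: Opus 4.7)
The plan is to treat the six assertions of Lemma~\ref{main:lem} in a unified way. Each one says that a nested definable family of semi-algebraic subsets of $B_\bF(V)$, parameterized by a single real parameter with the others held fixed, becomes locally constant up to homotopy near an appropriate endpoint of the parameter range. The two key inputs will be Theorem~\ref{prop:finite-homotopy-types} (finiteness of homotopy types in definable families over $B_\bF(V)$) and the definable deformation-retraction / triviality theory of Hrushovski--Loeser recalled in \S\ref{subsec:HL} (in particular Theorem~\ref{prop:sublevel}).

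For items \ref{itemlabel:main:lem:1} and \ref{itemlabel:main:lem:3} the plan is as follows. With $\delta',\eps,\eps',R$ (respectively $\eps',R$) fixed, the assignment $t \mapsto S_\sigma(t,\delta',\eps,\eps',R)$ (respectively $s \mapsto S'_\sigma(s,\eps',R)$) is a definable, monotone family of semi-algebraic subsets of $B_\bF(V)$: the defining formula of $S_\sigma$ involves $t$ only through continuous atoms of the shape $|F|<(t\lambda)\,|G|$, and similarly for $s$. Theorem~\ref{prop:finite-homotopy-types} supplies only finitely many homotopy types among the fibers; by o-minimality of the value group the parameter space $(1,\infty)$ decomposes into finitely many intervals on which the homotopy type of the fiber is constant, and we take $\delta_0$ (resp.~$\eps_0$) to be an interior point of the piece adherent to $1$. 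The definable triviality packaged with the Hrushovski--Loeser theory (Theorem~\ref{prop:sublevel}) then provides, over this stratum, a definable deformation retraction from the larger fiber to the smaller one, promoting the tautological inclusion to a homotopy equivalence.

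Items \ref{itemlabel:main:lem:2} and \ref{itemlabel:main:lem:4} require first that $\bigcap_{t>1}S_\sigma(t,\delta',\eps,\eps',R)$ (resp.\ $\bigcup_{s>1}S'_\sigma(s,s',R)$) be recognized as semi-algebraic sets depending definably on $\delta'$ (resp.\ $\eps'$). The plan is to verify directly from Notation~\ref{not:tubes} that taking the intersection over $t>1$ of the open tubes $\Tube^{+,o}$ amounts to replacing every strict atom $|F|<(t\lambda)|G|$ by the non-strict atom $|F|\leq\lambda|G|$, and dually that the increasing union over $s>1$ of the closed-complement tubes $\CTube^{-,c}$ is described by the corresponding strict inequalities; either way one lands in a definable family of the admissible shape. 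With that identification in hand, Theorems~\ref{prop:finite-homotopy-types} and \ref{prop:sublevel} apply verbatim to produce $\delta_0'$ (resp.~$\eps_0'$).

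Item \ref{itemlabel:main:lem:5} is a set-theoretic computation; the plan is to show
\[
\bigcap_{t>1,\,t'>0}\Tube^{+,o}_{V,\phi(\cdot,w_i)}(t,t',R)=\widetilde{\Reali}(\phi(\cdot,w_i),V)\cap\Cube_V(R),
\]
by analyzing each atom of $\phi^{+,o}$ in the limit $t\to 1^+,\ t'\to 0^+$ (the second disjunct in the tube formula collapses to the locus $F=G=0$, which is exactly what is needed to include the singular points of $\{|F|\leq\lambda|G|\}$). Dually, $\bigcup_{s,s'}\bigcap_{\sigma(i)=0}\CTube^{-,c}$ equals $\Cube_V(R)\setminus\bigcup_{\sigma(i)=0}\widetilde{\Reali}(\phi(\cdot,w_i),V)$, where the exchange of $\bigcap$ and $\bigcup$ is justified by the finiteness of the index set via pigeonhole applied to the nested families. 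Combining yields the asserted equality. For item \ref{itemlabel:main:lem:6} the plan is to invoke the Hrushovski--Loeser theorem from \S\ref{subsec:HL} that any semi-algebraic subset of $B_\bF(V)$ is a strong deformation retract of a compact subspace homotopy-equivalent to a finite simplicial complex; this compact subspace is contained in $\Cube_V(R)$ for all $R$ beyond some $R_0$, which forces the inclusion $\widetilde{\Reali}(\sigma,\bar w)\cap\Cube_V(R)\hookrightarrow\widetilde{\Reali}(\sigma,\bar w)$ to be a homotopy equivalence for $R>R_0$.

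The main obstacle I anticipate is in the triviality step of items \ref{itemlabel:main:lem:1}--\ref{itemlabel:main:lem:4}: Theorem~\ref{prop:finite-homotopy-types} alone only gives fiberwise constancy of the homotopy type, whereas what we really need is that the \emph{tautological} inclusion between fibers is a homotopy equivalence. Upgrading constancy to equivalence requires the stronger parameterwise deformation retraction furnished by Theorem~\ref{prop:sublevel}, and the main technical work will be checking that the hypotheses of that theorem apply to the concrete families $S_\sigma(\cdot)$ after the preliminary identification of the intersections/unions in items \ref{itemlabel:main:lem:2} and \ref{itemlabel:main:lem:4}.
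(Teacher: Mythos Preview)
Your approach is essentially correct and relies on the same key input as the paper, namely Theorem~\ref{prop:sublevel}. However, the detour through Theorem~\ref{prop:finite-homotopy-types} for parts~\ref{itemlabel:main:lem:1}--\ref{itemlabel:main:lem:4} is unnecessary, and the ``main obstacle'' you flag at the end is in fact a non-issue: Theorem~\ref{prop:sublevel} already asserts that the \emph{inclusion} $B_\bF(X)_{\geq \eps'} \hookrightarrow B_\bF(X)_{\geq \eps}$ is a homotopy equivalence on each interval of the finite partition---not merely that the fibers share a homotopy type. The paper's proof of each of parts~\ref{itemlabel:main:lem:1}--\ref{itemlabel:main:lem:4} consists of a single step: exhibit an explicit definable function $f$ (for part~\ref{itemlabel:main:lem:1}, $f(x)=\inf\{t:\Phi_{\sigma,\delta',\eps,\eps'}(x;t)\}$ on the appropriate ambient set) whose level sets are exactly the family in question, then invoke Theorem~\ref{prop:sublevel} directly. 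Your preliminary identification of the intersections and unions in parts~\ref{itemlabel:main:lem:2} and~\ref{itemlabel:main:lem:4} as semi-algebraic is correct and is indeed needed; the paper carries this out by writing down the relevant formulas explicitly.

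For part~\ref{itemlabel:main:lem:6} the paper again applies Theorem~\ref{prop:sublevel} directly (to a definable function such as $\max_i |X_i|$), which is both shorter and avoids a small gap in your proposed argument: knowing that $\widetilde{\Reali}(\sigma,\bar w)$ deformation-retracts onto some compact $Z$ contained in $\Cube_V(R)$ does not by itself show that the inclusion $\widetilde{\Reali}(\sigma,\bar w)\cap\Cube_V(R)\hookrightarrow\widetilde{\Reali}(\sigma,\bar w)$ is a homotopy equivalence, since the retraction need not carry $\Cube_V(R)\cap\widetilde{\Reali}(\sigma,\bar w)$ into itself.
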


\begin{remark}\label{rem:limcolim}
\begin{enumerate}
\item The subsets $S_{\sigma}(t,\delta',\eps,\eps',R)$ form an \emph{increasing} sequence in $t$ i.e.
if $t_1 < t_2$, then $S_{\sigma}(t_1,\delta',\eps,\eps',R) \subset S_{\sigma}(t_2,\delta',\eps,\eps',R)$. The analogous assertion also holds for 
$S_{\sigma}(\delta,t',\eps,\eps',R)$ (with $t'$ replacing $t$).
\item The subsets  $S_{\sigma}(\delta,\delta',s,\eps',R)$ form a \emph{decreasing} sequence in $s$ i.e.
if $s_1 < s_2$, then $S_{\sigma}(\delta,\delta',s_2,\eps',R) \subset S_{\sigma}(\delta,\delta',s_1,\eps',R)$. The analogous assertion also holds for 
$S_{\sigma}(\delta,\delta',\eps,s',R)$.
\item Then sequence of subsets $S_{\sigma}(\delta,\delta',\eps,\eps',R)$ is increasing in $R$.
\end{enumerate}

\end{remark}
\begin{proof}[Proof of Lemma \ref{main:lem}]
We prove each part separately below.
\begin{proof}[Proof of Part \eqref{itemlabel:main:lem:1}]
Let 
\[
S^1_{\sigma}(\delta',\eps,\eps',R) = \bigcup_{t > 1} S_{\sigma}(t,\delta',\eps,\eps',R).
\] 

First observe that $S^1_{\sigma}(\delta',\eps,\eps',R)$ is a semi-algebraic subset of $B_{\bF}(V)$. To see this let 
\[
\Phi_{\sigma,\delta',\eps,\eps'}(\cdot; T):= \bigwedge_{i, \sigma(i) = 1}\phi^{+,o}(\cdot,w_i;T,\delta') \wedge \bigwedge_{i, \sigma(i) = 0} \neg \phi^{+,o}(\cdot,w_i;\eps,\eps') \wedge \bigwedge_{1 \leq i \leq N} (|X_i| \leq R) ,
\]
and let
\[
\Phi^1_{\sigma,\delta',\eps,\eps'}(\cdot) := (\exists T) (T > 1) \wedge \Phi_{\sigma,\delta',\eps,\eps'}(\cdot;T).
\]

By \ref{enum:BF_prop}(\ref{enum:BF_prop9}),
\begin{equation*}
S^1_{\sigma}(\delta',\eps,\eps',R) = \widetilde{\Reali}(\Phi^1_{\sigma,\delta',\eps,\eps'},V).
\end{equation*}
It follows that
$S^1_{\sigma}(\delta',\eps,\eps',R)$ is a semi-algebraic subset of $B_{\bF}(V)$.
Now consider the function
$f:  \Reali(\Phi^1_{\sigma,\delta',\eps,\eps'},V) \rightarrow \bbR_+$ 
defined by 
\[
f(x) := \inf_{\{(x,t) \; \mid \;\Phi_{\sigma,\delta', \eps,\eps'}(x;t) \}} t.
\]
It is clear that $f$ is definable. Note that 
\[
S_{\sigma}(t,\delta',\eps,\eps',R) = 
\widetilde{\Reali}(\Phi^1_{\sigma,\delta', \eps,\eps'} \wedge f \geq t,V).
\]
The claim now follows as a direct consequence of Theorem \ref{prop:sublevel}. \\
\end{proof}

\begin{proof}[Proof of Part~\eqref{itemlabel:main:lem:2}]
Let 
\[
S^2_{\sigma}(\eps,\eps',R)  = \bigcup_{t' > 0} \bigcap_{t > 1} S_{\sigma}(t,t',\eps,\eps',R).
\] 
Then, $S^2_{\sigma}(\eps,\eps',R)$ is a semi-algebraic subset of $B_{\bF}(V)$.
To see this let  
\[
\Phi^2_{\sigma,\eps,\eps'}(\cdot;T') = \bigwedge_{\sigma(i) = 1} \phi^{+,c}(\cdot,w_i;1,T') \wedge \bigwedge_{\sigma(i) = 0} \neg\phi^{+,o}(\cdot,w_i;\eps,\eps') \wedge \bigwedge_{1 \leq i \leq N} (|X_i| \leq R),
\]
and 
\[
\Phi^3_{\sigma,\eps,\eps'}(\cdot) := (\exists T' ) (T' > 0) \wedge \Phi^2_{\sigma,\eps,\eps'}(\cdot;T').
\]

As in the previous part,
\begin{equation*}
S^2_{\sigma}(\delta',\eps,\eps',R) = \widetilde{\Reali}(\Phi^3_{\sigma,\eps,\eps'},V).
\end{equation*}

In particular, 
$S^2_{\sigma}(\delta',\eps,\eps',R)$ is semi-algebraic.

Moreover, 
let $g:  \Reali(\Phi^3_{\sigma,\eps,\eps'},V)  \rightarrow \bbR_+$ be the map
defined by 
\[
g(x) := \inf_{\{(x;t') \; \mid \; \Phi^2_{\sigma,\eps,\eps'}(x;t') \} } t'.
\]
Clearly, $g$ is definable and
\[
S^2_{\sigma}(t',\eps,\eps',R) = 
\widetilde{\Reali}(\Phi^3_{\sigma,\eps,\eps'} \wedge g \geq t',V).
\]
As in the previous part, the result follows from an application of Theorem \ref{prop:sublevel} to the map $g$.
\end{proof}

\begin{proof}[Proof of Part \eqref{itemlabel:main:lem:3}]
First, note that $S'_{\sigma}(\eps,\eps',R)$ is semi-algebraic. 
Moreover, we observe that the union $S^3_\sigma(\eps',R) = \bigcup_{s > 1} S'_{\sigma}(s,\eps',R)$ is also a semi-algebraic subset of $B_{\bF}(V)$. To see this 
let  
\[
\Phi^4_{\sigma,\eps'}(\cdot;S) = \bigwedge_{\sigma(i) = 1} \phi^{+,c}(\cdot,w_i;1,0) \wedge \bigwedge_{\sigma(i) = 0} \neg\phi^{+,o}(\cdot,w_i;S,\eps') \wedge \bigwedge_{1 \leq i \leq N} (|X_i| \leq R) .
\]
and 
\[
\Phi^5_{\sigma,\eps'}(\cdot) := (\exists S ) (S > 1) \wedge \Phi^4_{\sigma,\eps'}(\cdot;S).
\]

Then, 
\begin{equation*}
S^3_{\sigma}(\eps',R) = \widetilde{\Reali}(\Phi^5_{\sigma,\eps'},V).
\end{equation*}
In particular,
$S^3_{\sigma}(\eps',R)$ is semi-algebraic.

Let $h: \Reali(\Phi^5_{\sigma,\eps'},V) \rightarrow \bbR_{+} $ be given by 
\[
h(x) = \sup_{\{(x;s)\;| \Phi^4_{\sigma,\eps'}(x,s) \}} s.
\] 

Clearly, $h$ is definable. Moreover, 
\[
S_{\sigma}'(s,\eps',R) = 
\widetilde{\Reali}(\Phi^5_{\sigma,\eps'} \wedge h \geq s,V). 
\]
Now apply Theorem \ref{prop:sublevel}. 
\end{proof}

\begin{proof}
[Proof of Part \eqref{itemlabel:main:lem:4}]
The proof is similar to that of Part \ref{itemlabel:main:lem:3}. 
\end{proof}

\begin{proof}
[Proof of Part \eqref{itemlabel:main:lem:5}]
This follows from the definition of $S'_\sigma(s,s',R)$.
\end{proof}

\begin{proof}
[Proof of Part \eqref{itemlabel:main:lem:6}]
This part follows immediately from Theorem \ref{prop:sublevel}. 
\end{proof}

This completes the proof of Lemma~\ref{main:lem}.
\end{proof}

We now prove Proposition~\ref{prop:main}.
Since the proof is long and technical, we begin by giving a general outline. 
Because of the nature of
the argument the steps enumerated do not actually occur in the same order as in the list below. 

\begin{enumerate}[Step 1.]
\item
\label{itemlabel:prop:main:proof:step:1}

By Lemma \ref{main:lem} (Part \eqref{itemlabel:main:lem:6}), there exists an $R_0 > 0$ such that for all $R > R_0$ the natural inclusion $$\widetilde{\Reali}(\sigma,\bar{w}) \cap \Cube_V(R) \hookrightarrow \widetilde{\Reali}(\sigma,\bar{w})$$ induces an isomorphism:
$$
  \HH^*(\widetilde{\Reali}(\sigma,\bar{w})) \xrightarrow{\cong} \HH^*(\widetilde{\Reali}(\sigma,\bar{w}) \cap \Cube_V(R)). 
$$
So we fix some $R >0$ large enough and consider only the semi-algebraic set
$\widetilde{\Reali}(\sigma,\bar{w}) \cap \Cube_V(R))$.

\item 
\label{itemlabel:prop:main:proof:step:2}
By Lemma \ref{main:lem} (Part \eqref{itemlabel:main:lem:5}), 
we have natural inclusions
$$  S'_{\sigma}(s,s',R) \hookrightarrow  \bigcup_{s>1,s'>0} S'_{\sigma}(s,s',R)=\widetilde{\Reali}(\sigma,\bar{w}) \cap \Cube_V(R).$$ We shall see in Claim \ref{prop:main:claim:4} below that this induces an isomorphism 
\[
\HH^*(\widetilde{\Reali}(\sigma,\bar{w}))  \cap \Cube_V(R)) \cong  \varprojlim_{s'} \varprojlim_{s} \HH^*(S_{\sigma}'(s,s',R)).
\]

\item
\label{itemlabel:prop:main:proof:step:3}
We shall see in Claim \ref{prop:main:claim:1} below that 
the natural inclusions 
\[
S_{\sigma}'(\eps,\eps',R) 
\hookrightarrow S_{\sigma}(t,t',\eps,\eps',R)
\]
induce an isomorphism 
$$
\varinjlim_{t'} \varinjlim_{t} \HH^*(S_{\sigma}(t,t',\eps,\eps',R)) \cong \HH^*(S_{\sigma}'(\eps,\eps',R)) .
$$

\item
\label{itemlabel:prop:main:proof:step:4}
In order to conclude, we shall show that the direct and inverse limits appearing in 
Step~\ref{itemlabel:prop:main:proof:step:2} (proved in Claim \ref{prop:main:claim:6}) and 
Step~\ref{itemlabel:prop:main:proof:step:3} (proved in Claim \ref{prop:main:claim:3})  `stabilize'. This stabilization will result as a consequence of the homotopy equivalences proved in Lemma \ref{main:lem}, and
is proved in two intermediate steps (Claims \ref{prop:main:claim:4} and \ref{prop:main:claim:5} for Step 2, 
and Claims \ref{prop:main:claim:2} and \ref{prop:main:claim:3} for Step 3).
\end{enumerate}

The proofs involving commutation of the limit (or colimit) functors with
cohomology in Steps~\ref{itemlabel:prop:main:proof:step:2} and \ref{itemlabel:prop:main:proof:step:3} all rely on proving that a certain increasing family of compact subspaces 
$S_\lambda \subset T$, 
of a semi-algebraic set $T$, indexed by a real parameter $\lambda$, 
are cofinal in the family of all compact subspaces of 
$S := \cup_{\lambda} S_{\lambda}$ in $T$ (the families are different for different steps). One then uses Lemma \ref{lem:continuity} to obtain the desired commutation of various limits (or colimits) with cohomology. The proofs of all these cofinality statements
rely on the following basic lemma that we extract out for clarity.

\begin{lemma}
\label{lem:cofinal}
Let $T$ be a compact Hausdorff space, $\Lambda$ a partially ordered set, $(C_{\lambda})_{\lambda \in \Lambda}$ an increasing sequence of compact subsets of $T$, and $S := \cup_{\lambda} C_{\lambda}$. Suppose that there is a continuous function $\theta: S \rightarrow \bbR_{> 0} \cup \{\infty\}$ such that the following property holds:
\begin{equation}
\label{eqn:lem:cofinal:property}
\text{\parbox{.85\textwidth}
{For each $\theta_0 \in \bbR_{>0}$, there exists a $\lambda(\theta_0) \in J$ such that $x \in C_{\lambda(\theta_0)}$ if $\theta(x) \geq \theta_0$.}}
\end{equation}
Then the family $(C_{\lambda})_{\lambda \in \Lambda}$ 
is cofinal in the family of compact subsets of $S$ in $T$.
\end{lemma}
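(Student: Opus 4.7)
The plan is to reduce the cofinality conclusion directly to the hypothesis~\eqref{eqn:lem:cofinal:property} by a standard compactness argument. Given a compact subset $K$ of $S$, viewed as a subset of the compact Hausdorff space $T$, it suffices to exhibit a single $\theta_0 \in \mathbb{R}_{>0}$ with $\theta(x) \geq \theta_0$ for all $x \in K$: the hypothesis then immediately supplies an element $\lambda(\theta_0) \in \Lambda$ with $K \subset C_{\lambda(\theta_0)}$, which is exactly what cofinality demands.

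To produce the uniform positive lower bound on $\theta|_K$, I would run an open-cover argument on $S$. Because $\theta$ takes values in $\mathbb{R}_{>0} \cup \{\infty\}$ and is continuous, the family of sets $U_r := \theta^{-1}((r,\infty])$, indexed by $r \in \mathbb{R}_{>0}$, is an increasing open cover of $S$ in its subspace topology from $T$: every $x \in S$ with $\theta(x) < \infty$ lies in $U_{\theta(x)/2}$, while every $x$ with $\theta(x) = \infty$ belongs to all the $U_r$. Restricting to $K$, which is compact in its own right (and whose subspace topology from $S$ coincides with that from $T$), some finite subfamily $U_{r_1},\ldots,U_{r_m}$ already covers $K$. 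Setting $\theta_0 := \min_j r_j > 0$, we obtain the chain of inclusions
\[
K \;\subset\; U_{\theta_0} \;\subset\; \{x \in S : \theta(x) \geq \theta_0\} \;\subset\; C_{\lambda(\theta_0)},
\]
where the last inclusion is hypothesis~\eqref{eqn:lem:cofinal:property}. This finishes the proof.

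The lemma is essentially a one-line consequence of compactness, so there is no substantial obstacle. The only interpretive point worth confirming is the convention on the topology of $\mathbb{R}_{>0} \cup \{\infty\}$: we take the (standard) one making every set $(r,\infty]$ open, equivalently identifying the target with $[0,\infty)$ via $t \mapsto 1/t$ with $\infty \mapsto 0$. With this convention the preimages $U_r$ are genuinely open in $S$, which is all that the covering argument requires.
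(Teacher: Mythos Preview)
Your proof is correct and follows essentially the same approach as the paper: both use compactness of the given compact subset $K\subset S$ to extract a uniform positive lower bound $\theta_0$ on $\theta|_K$, and then invoke hypothesis~\eqref{eqn:lem:cofinal:property}. The paper phrases this via the extreme value theorem (``$\theta|_C$ attains its minimum $\theta_0>0$'') while you run the equivalent open-cover argument with the sets $U_r=\theta^{-1}((r,\infty])$; your version is slightly more explicit about the topology on $\mathbb{R}_{>0}\cup\{\infty\}$ and the case $\theta(x)=\infty$, but the substance is identical.
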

\begin{proof}
Let $C \subset S$ be a compact subset  of $S$ in $T$. We need to show that there is a $\lambda$ such that $C \subset C_{\lambda}$.  Since $C$ is compact, $F|_{C}$ attains its minimum $\theta_0 > 0$ on $C$. Let $\lambda(\theta_0)$ be as in the proposition. Clearly,
\[
x \in C \Rightarrow \theta(x) \geq \theta_0 \Rightarrow  x \in C_{\lambda(\theta_0)}.
\]

It follows that $C \subset C_{\lambda(\theta_0)}$, and so the family 
$(C_{\lambda})_{\lambda \in \Lambda}$ is cofinal in the family of compact subsets of $S$ in $T$.
\end{proof}

\begin{proof}[Proof of Proposition \ref{prop:main}]

\begin{customclaim}{1}
\label{prop:main:claim:1}
 The natural inclusions 
\begin{equation}
\label{eqn:def-of-S-sigma-prime}
S_{\sigma}'(\eps,\eps',R) := 
\bigcap_{t> 1,t'>0} S_{\sigma}(t,t',\eps,\eps',R)
\hookrightarrow S_{\sigma}(t,t',\eps,\eps',R)
\end{equation}
induce an isomorphism 
\begin{equation}
\label{eqn:prop:main:claim:1}
\HH^*(S_{\sigma}'(\eps,\eps',R))  \cong \varinjlim_{t,t'} 
\HH^*(S_{\sigma}(t,t',\eps,\eps',R)).
\end{equation}
As an immediate consequence we also have
\begin{equation}
\label{eqn:prop:main:claim:1'}
\HH^*(S_{\sigma}'(\eps,\eps',R))  \cong \varinjlim_{t'} \varinjlim_{t} 
\HH^*(S_{\sigma}(t,t',\eps,\eps',R)).
\end{equation}
\end{customclaim}
(Here the inductive limit in \eqref{eqn:prop:main:claim:1} is taken over the poset $\bbR_{>1} \times \bbR_{>0}$, partially 
ordered by 
\[
(t_1,t_1') \preceq (t_2,t_2') \mbox{ if and only if }  t_2 \leq t_1 \mbox{ and } t_2' \leq t_1',
\]
and  for $(t_1,t_1') \preceq (t_2,t_2')$, 
the morphism
\[
\HH^*(S_{\sigma}(t_1,t'_1,\eps,\eps',R)) \rightarrow
\HH^*(S_{\sigma}(t_2,t'_2,\eps,\eps',R))
\]
is induced from the inclusion
$S_{\sigma}(t_2,t'_2,\eps,\eps',R) \hookrightarrow S_{\sigma}(t_1,t'_1,\eps,\eps',R)$.)

\begin{proof}[Proof of  Claim~\ref{prop:main:claim:1}]
First note that the isomorphism \eqref{eqn:prop:main:claim:1'} is an immediate consequence of
the isomorphism \eqref{eqn:prop:main:claim:1}, and the fact that 
\[
\varinjlim_{t'} \varinjlim_{t} 
\HH^*(S_{\sigma}(t,t',\eps,\eps',R))
\cong
\varinjlim_{t,t'} 
\HH^*(S_{\sigma}(t,t',\eps,\eps',R)).
\]
(see for example   \cite[Expose 1, page 13]{SGA4-tome1} for the last isomorphism).

We now proceed to prove the isomorphism \eqref{eqn:prop:main:claim:1}.
Let 
\[
T =  \bigcap_{i, \sigma(i) = 0} \CTube^{-,c}_{V,\phi(\cdot,w_i)}(\eps,\eps',R).
\]

Since each $\CTube^{-,c}_{V,\phi(\cdot,w_i)}(\eps,\eps',R)$ 
is compact, $T$ is a compact Hausdorff space. Notice that  for
each $t>1,t' > 0$,  $S_{\sigma}(t,t',\eps,\eps',R) \subset T$.

We will now show 
for fixed  $\eps,\eps',R$, 
the family of semi-algebraic sets 
\begin{equation}
\label{eqn:prop:main:claim1:cofinal}
\left(S_{\sigma}(t,t',\eps,\eps',R) \right)_{t > 1, t' >0}
\end{equation}
is a cofinal system of open neighborhoods of 
\[
\bigcap_{t >1,t' > 0}S_{\sigma}(t,t',\eps,\eps',R) 
\]
 in $T$.
Assuming this fact, the claim follows from Part \eqref{itemlabel:lem:continuity:1} of Lemma \ref{lem:continuity}.\\

In order to prove the cofinality statement for the family \eqref{eqn:prop:main:claim1:cofinal}, we first prove the following cofinality statement from which the cofinality of 
\eqref{eqn:prop:main:claim1:cofinal}
will follow.\\

Suppose that $I$ is a finite set, 
and let for each $i \in I$, 
$F_i,G_i \in K[X_1,\ldots,X_N]$, and $\lambda_i \in \bbR_+$.
Let $V$ be as before, $R>0$,
$T^{(1)}$ a compact semi-algebraic subset of $\Cube_V(R)$.
We define 
\begin{equation*}
 S^{(1)}(t,t',R) := T^{(1)} \cap \bigcap_{i \in I} \Tube^{+,o}_{V, |F_i| \leq \lambda_i \cdot  |G_i|}(t,t',R).
\end{equation*}

Notice that  for
each $t>1,t' > 0$,  $S^{(1)}(t,t',R) \subset T^{(1)}$, and hence 
\[
\bigcap_{t >1,t' > 0}S^{(1)}(t,t',R)\subset T^{(1)}
\] 
as well.

\begin{customclaim}{1a}
\label{prop:main:claim:1a}
The family of semi-algebraic sets 
\[
\left(S^{(1)}(t,t',R) \right)_{t > 1, t' >0}
\] 
is a cofinal system of open neighborhoods of 
\[
\bigcap_{t >1,t' > 0}S^{(1)}(t,t',R) 
\]
in $T^{(1)}$.
\end{customclaim}

\begin{proof}[Proof of Claim~\ref{prop:main:claim:1a}]
Proving cofinality of the family 
$
\left(S^{(1)}(t,t',R) \right)_{t > 1, t' >0}
$ 
in the partially ordered family of open neighborhoods of 
\[
\bigcap_{t >1,t' > 0}S^{(1)}(t,t',R) 
\]
is equivalent to proving the cofinality of the family of compact subsets
\[
\left(T^{(1)} - S^{(1)}(t,t',R) \right)_{t > 1, t' >0}
\]
in the partially ordered family of compact subsets of 
$
T^{(1)} - \bigcap_{t >1,t' > 0}S^{(1)}(t,t',R)
$.
For proving the latter we use Lemma~\ref{lem:cofinal}, 
with $\Lambda = \bbR_{>1} \times \bbR_{>0}$, and the 
family $(C_\lambda)_{\lambda \in \Lambda} :=  (T^{(1)} - S^{(1)}(t,t',R))_{(t,t') \in \Lambda}$
of compact semi-algebraic subsets of the compact set $T^{(1)}$.\\

We now define  a  continuous function
$\theta:T^{(1)} - \bigcap_{t >1,t' > 0}S^{(1)}(t,t',R) \rightarrow \bbR_{\geq 0}$.
We first introduce the following auxiliary functions which will be used in the definition of the function $\theta$.
For $\lambda \geq 0$, let $H_\lambda(u,v): \bbR_{\geq 0} \times \bbR_{\geq 0} \rightarrow \bbR_{\geq 0}$ be defined as follows. If $\lambda = 0$, then 
\begin{eqnarray*}
H_0(u,v) &:=& u,
\end{eqnarray*}
and if $\lambda > 0$
\begin{eqnarray}
\label{eqn:prop:main:H}
H_\lambda(u,v) &=& \min(\max(u,v), \max(0,\frac{u}{\lambda v}  -1)),  \mbox{if $v \neq 0$}, \\
&=& u, \mbox{ else}. 
\end{eqnarray}
It is easy to check that the functions $H_\lambda(u,v)$ are continuous. \\
For each $i \in I$, let 
$\theta_i: T^{(1)} - \bigcap_{t >1,t' > 0}S^{(1)}(t,t',R) \rightarrow \bbR_{\geq 0}$ 
be the function defined by
 \[
\theta_i(x) = H_{\lambda_i}(|F_i(x)|,|G_i(x)|),
\]
and let $\theta: T^{(1)} - \bigcap_{t >1,t' > 0}S^{(1)}(t,t',R) \rightarrow \bbR_{\geq 0}$
be defined by
\begin{equation*}
\theta_i(x) = \max_{i \in I} \theta_i(x).
\end{equation*}
Notice that each $\theta_i$, and hence also $\theta$ are continuous, since they are compositions of continuous functions. \\

In order to apply Lemma~\ref{lem:cofinal} it remains to check that $\theta$ is positive, and
that it satisfies \eqref{eqn:lem:cofinal:property} in
Lemma \ref{lem:cofinal}. \\

\begin{enumerate}
\item
$\theta(x) > 0$ for each 
$x \in T^{(1)} - \bigcap_{t >1,t' > 0}S^{(1)}(t,t',R)$: \\
Suppose that $\theta(x) = 0$. This implies that $\theta_i(x) = 0$ for each $i \in I$.

If $\lambda_i=0$, then $\theta_i(x) = 0$ implies that $|F_i(x)| = 0$.
If $\lambda_i > 0$, then $\theta_i(x) = 0$ implies that either $|F_i(x)| = |G_i(x)| = 0$
or $|F_i(x)|/(\lambda_i \cdot |G_i(x)|) \leq 1$ or equivalently $|F_i(x)| \leq \lambda_i \cdot |G_i(x)|$.
Together they imply that $x \in \bigcap_{t >1,t'>0} S^{(1)}(t,t',R)$, 
which is a contradiction. \\

\item
$\theta$ satisfies \eqref{eqn:lem:cofinal:property} in Lemma~\ref{lem:cofinal}, 
with
$\lambda$ defined by $\lambda(\theta_0) = (1+\theta_0,\theta_0)$: \\
Suppose $\theta(x) \geq \theta_0$. First note that
\begin{eqnarray*}
T^{(1)} \setminus S^{(1)}(1+\theta_0,\theta_0,R) &=& T^{(1)} \setminus
\bigcap_{i \in I} \Tube^{+,o}_{V, |F_i| \leq \lambda_i \cdot  |G_i|}(1+\theta_0,\theta_0,R) \\
&=&
T^{(1)} \cap \bigcup_{i \in I} \CTube^{-,c}_{V, |F_i| \leq \lambda_i \cdot  |G_i|}(1+\theta_0,\theta_0,R),
\end{eqnarray*}
which is equal to the set
\[
T^{(1)} \cap \bigcup_{i \in I} 
\widetilde{\Reali}((|F| \geq \lambda_i \cdot (1+\theta_0) \cdot |G|) \wedge ((|F| \geq \theta_0 ) \vee (|G| \geq \theta_0 ))).
\]

Since $\theta(x) \geq \theta_0$, there exists an $i$ such that $\theta(x) = \theta_i(x) = \theta_0$.
This implies that 
$|F_i(x)|$ and $|G_i(x)|$ are not simultaneously $0$. We have two cases.
If $\lambda_i = 0$, then we have that
\[
|F_i(x)|  = \theta_i(x) \geq  \theta_0,
\]
which implies that 
\[
x \in \widetilde{\Reali}(|F_i| \geq (\lambda_i \cdot (1+\theta_0) \cdot |G_i|) \wedge ((|F_i| \geq \theta_0 ) \vee (|G_i| \geq \theta_0).
\]
Otherwise, $\lambda_i > 0$. If $|G_i(x)| \neq 0$,
we have that
\[
\max(|F_i(x)|,|G_i(x)|) \geq \theta_i(x) \geq  \theta_0,
\]
and 
\[
\max(0, \frac{|F_i(x)}{\lambda_i |G_i(x)} - 1) \geq \theta_i(x) \geq \theta_0,
\]
which again implies that 
\[
x \in \widetilde{\Reali}(|F| \geq (\lambda_i \cdot (1+\theta_0) \cdot |G|) \wedge ((|F| \geq \theta_0 ) \vee (|G| \geq \theta_0).
\]
If $|G_i(x)| = 0$, then $|F_i(x)| = \theta_0$, and we have again 
\[
x \in \widetilde{\Reali}(|F| \geq (\lambda_i \cdot (1+\theta_0) \cdot |G|) \wedge ((|F| \geq \theta_0 ) \vee (|G| \geq \theta_0).
\]

\end{enumerate}
This completes the proof 
that
$\theta$ satisfies Property~\eqref{eqn:lem:cofinal:property} in Lemma~\ref{lem:cofinal}
with
$\lambda$ defined by $\lambda(\theta_0) = (1+\theta_0,\theta_0)$, hence
completing the proof of 
Claim~\ref{prop:main:claim:1a}.
\end{proof}

Now we return to the proof the Claim~\ref{prop:main:claim:1}.
Let $\phi = \bigvee_{h \in H} \phi^{(h)}$,
 where each $\phi^{(h)}$ is a conjunction of weak inequalities, 
 $|F_{jh}| \leq \lambda_{jh}\cdot|G_{jh}|$, $j \in J_h$,
 and $H,J_h$ are finite sets.\\
 
 Let $I_\sigma = \{i \in [1,n] \mid \sigma_i =1\}$ and $H^{I_{\sigma}}$ denote the 
 set of maps $\psi: I_{\sigma} \rightarrow H$. 
 Note that 
 \[
  S_{\sigma}(t,t',\eps,\eps',R) = \bigcap_{I_\sigma} \left(\bigcup_{h \in H} \bigcap_{j \in J_h} \Tube^{+,o}_{V,|F_{jh}(\cdot,w_i)| \leq \lambda_{jh}\cdot|G_{jh}(\cdot,w_i)|}(t,t',R)\right) \cap T.
\]
(Recall that 
 \[
 T = \bigcap_{i, \sigma_i = 0} \CTube^{-,c}_{V,\phi(\cdot,w_i)}(t,t',R)
 \]
is a compact semi-algebraic set.)
 Then, 
 \[
 S_{\sigma}(t,t',\eps,\eps',R) = \bigcup_{\psi\in H^{I_{\sigma}}}  S_{\sigma}^{(\psi)}(t,t',\eps,\eps',R),
 \] 
 where  for $\psi \in H^{I_{\sigma}}$
 \[
 S_{\sigma}^{(\psi)}(t,t',\eps,\eps',R) = T\cap \bigcap_{i, \sigma_i = 1} \Tube^{+,o}_{V,\phi^{(\psi(i))}(\cdot,w_i)}(t,t',R).
 \]
 An open neighborhood $U$ of 
 $\bigcap_{t > 1,t' >0} S_{\sigma}(t,t',\eps,\eps',R)$ in $T$ is clearly also an open neighborhood of
 $\bigcap_{t > 1, t' > 0} S_{\sigma}^{(\psi)}(t,t',\eps,\eps',R)$ for each $\psi \in H^{I_{\sigma}}$.
 
  Fixing a $\psi \in H^{I_{\sigma}}$, we apply Claim~\ref{prop:main:claim:1a}, with 
  \begin{eqnarray*}
  T^{(1)} &=& T, \\ 
  I &=& \{(j,\psi(i)) \mid i \in I_\sigma, j \in J_{\psi(i)}\},
  \end{eqnarray*}
  and for $i_0 = (j,\psi(i)) \in I$,
  \begin{eqnarray*}
  F_{i_0} &=& F_{j,\psi(i)}, \\
  G_{i_0} &=& G_{j,\psi(i)},  \\
  \lambda_{i_0} &=& \lambda_{j,\psi(i)}. 
  \end{eqnarray*}
  We obtain that for each $\psi \in H^{I_\sigma}$, there exists $\theta_0^{(\psi)} > 0$, such that 
 \[
 S_{\sigma}^{(\psi)}(1+\theta_0^{(\psi)},\theta_0^{(\psi)},\eps,\eps',R) \subset U. 
 \] 
 
 Now take $\theta_0= \min_{\psi \in H^{I_\sigma}} \theta_0^{(\psi)}$.
 Then, 
 \[
 S_{\sigma}(1+\theta_0,\theta_0,\eps,\eps',R) = \bigcup_{\psi \in H^{I_\sigma}} S_{\sigma}^{(\psi)}(1+\theta_0,\theta_0,\eps,\eps',R) \subset U.
 \] 
 This proves  \eqref{eqn:prop:main:claim:1} and concludes the proof of 
 Claim~\ref{prop:main:claim:1}.
 \end{proof}
 
\begin{customclaim}{2}
\label{prop:main:claim:2}

 The natural inclusions 
\[
\bigcap_{t> 1} S_{\sigma}(t,t',\eps,\eps',R)
\hookrightarrow S_{\sigma}(t,t',\eps,\eps',R)
\]
induce 
for each fixed $t' > 0$, $\eps >1$, $\eps' >0$, $R > 0$,
an isomorphism 
\begin{equation}
\label{eqn:prop:main:claim:2}
\HH^*(\bigcap_{t> 1} S_{\sigma}(t,t',\eps,\eps',R))
  \cong \varinjlim_{t} %%\varinjlim_{t\rightarrow 1} 
\HH^*(S_{\sigma}(t,t',\eps,\eps',R)).
\end{equation}
\end{customclaim}

\begin{proof}[Proof of  Claim~\ref{prop:main:claim:2}]
The proof is structurally similar to the proof of  Claim~\ref{prop:main:claim:1}.
Let 
\[
T =  \bigcap_{i, \sigma(i) = 0} \CTube^{-,c}_{V,\phi(\cdot,w_i)}(\eps,\eps',R).
\]

Then $T$ is compact. We will now show 
for fixed  $t',\eps,\eps',R$, 
the family of semi-algebraic sets 
\begin{equation}
\label{eqn:prop:main:claim2:cofinal}
\left(S_{\sigma}(t,t',\eps,\eps',R) \right)_{t > 1}
\end{equation}
is a cofinal system of open neighborhoods of 
\[
\bigcap_{t >1}S_{\sigma}(t,t',\eps,\eps',R) 
\]
 in $T$.
Assuming this fact, the claim follows from Part \eqref{itemlabel:lem:continuity:1} of Lemma \ref{lem:continuity}.\\

In order to prove the cofinality statement for the family \eqref{eqn:prop:main:claim2:cofinal},
we first prove the following cofinality statement from which the cofinality of 
\eqref{eqn:prop:main:claim2:cofinal}
will follow.\\

Suppose that $I$ is a finite set, 
and let for each $i \in I$, 
$F_i,G_i \in K[X_1,\ldots,X_N]$, and $\lambda_i \in \bbR_+$. Let $V$ be as before, $R>0$,
and $T^{(2)}$ a compact semi-algebraic subset of $\Cube_V(R)$.
We
define 
\begin{equation*}
 S^{(2)}(t,t',R) := T^{(2)} \cap \bigcap_{i \in I} \Tube^{+,o}_{V, |F_i| \leq \lambda_i \cdot  |G_i|}(t,t',R).
\end{equation*}

\begin{customclaim}{2a}
\label{prop:main:claim:2a}
The family of semi-algebraic sets 
\[
\left(S^{(2)}(t,t',R) \right)_{t > 1}
\] 
is a cofinal system of open neighborhoods of 
\[
\bigcap_{t >1}S^{(2)}(t,t',R) 
\]
in $T^{(2)}$.
\end{customclaim}

\begin{proof}[Proof of Claim~\ref{prop:main:claim:2a}]
To prove that the family of semi-algebraic sets 
\[
\left(S^{(2)}(t,t',R) \right)_{t > 1}
\] 
is a cofinal system of open neighborhoods of 
\[
\bigcap_{t >1}S^{(2)}(t,t',R) 
\]
is equivalent to proving that the family of compact
semi-algebraic sets,
\[
\left(T^{(2)} - S^{(2)}(t,t',R) \right)_{t > 1}
\]  
is cofinal in the family of compact subsets of $T^{(2)}- \bigcap_{t >1}S^{(2)}(t,t',R)$.

Let 
\begin{eqnarray*}
S^{(2)}_i(t,t',R)^c &:=& T^{(2)} \cap \CTube^{-,c}_{V,|F_i| \leq \lambda_i\cdot |G_i|}(t,t',R) \\
&=&
  T^{(2)} \cap \widetilde{\Reali}((|F_{i}| \geq t \cdot \lambda_i \cdot  |G_{i}|) \wedge \\
  && ((|F_{i}|  \geq   t')  \vee (|G_{i}| \geq  t')),V), \mbox{ if $\lambda_i > 0$}, \\
  &=&
  T^{(2)} \cap \widetilde{\Reali}((|F_{i}|  \geq   t'),V), \mbox{ if $\lambda_i = 0$}.
\end{eqnarray*}
Note that 
\[
T^{(2)} - S^{(2)}(t,t',R) =  \bigcup_{i \in I} S^{(2)}_i(t,t',R)^c,
\]
and
\[
T^{(2)}- \bigcap_{t >1}S^{(2)}(t,t',R) =  \bigcup_{i \in I} \bigcup_{t >1}S^{(2)}_i(t,t',R)^c
\]
The last cofinality statement would follow if for each $i$ we can show that 
the family of compact semi-algebraic sets  $\left(S^{(2)}_i(t,t',R)^c \right)_{t > 1}$ is cofinal in the family of compact subspaces of 
$\bigcup_{t >1}S^{(2)}_i(t,t',R)^c$.
This is because if for each compact subspace 
\[
C \subset T^{(2)}- \bigcap_{t >1}S^{(2)}(t,t',R) = \bigcup_{i \in I} \bigcup_{t >1}S^{(2)}_i(t,t',R)^c
\] 
and
$i \in I$,  there exists $t_{0,i} > 1$, such that $C \cap \bigcup_{t >1}S^{(2)}_i(t,t',R)^c \subset S^{(2)}_i(t_{0,i},t',R)^c$, then $C \subset T^{(2)}- S^{(2)}(t_0,t',R)$ with 
$t_0 = \min_i t_{0,i}$. \\

We now proceed to show the cofinality of  the family $\left(S^{(2)}_i(t,t',R)^c \right)_{t > 1}$  in the family of compact subspaces of 
$\bigcup_{t >1}S^{(2)}_i(t,t',R)^c$ using Lemma~\ref{lem:cofinal}.

For each $i \in I$, consider the continuous function
$\theta_{i}:  \bigcup_{t >1}S^{(2)}_i(t,t',R)^c \rightarrow \bbR_+ \cup \{\infty\}$ defined by
\begin{eqnarray}
\nonumber
\theta_i(x) &=& |F_i(x)|
\mbox{ if $\lambda_i = 0$}, \\
\label{eqn:prop:main:claim2a:theta}
\theta_{i}(x) &=&  \frac {|F_{i}(x)|}{\lambda_i|G_{i}(x)|}, \mbox{ if $\lambda_i > 0$}.
\end{eqnarray}

It is an easy exercise to check that the functions $\theta_i$  positive and satisfies Property~
\eqref{eqn:lem:cofinal:property} in Lemma~\ref{lem:cofinal}, 
with the map $\lambda$ defined by 
\begin{eqnarray*}
\lambda(\theta_0) &=& t' \mbox{ if $\lambda_i = 0$}, \\
&=& \theta_0 \mbox{ if $\lambda_i > 0$}.
\end{eqnarray*}
satisfy the hypothesis of 
Lemma~\ref{lem:cofinal}. This finishes the proof of Claim~\ref{prop:main:claim:2a}.
 \end{proof}
 
 The proof of Claim~\ref{prop:main:claim:2} follows from the proof of
 Claim ~\ref{prop:main:claim:2a}, in exactly the same manner as the proof
 of Claim~\ref{prop:main:claim:1} from Claim~\ref{prop:main:claim:1a} and is omitted.
\end{proof}

\begin{customclaim}{3}
\label{prop:main:claim:3}
For every fixed $\eps > 1, \eps' >0$ and $R>0$, there exists $\delta'_0 > 0$ and for each $0< \delta' \leq \delta'_0$, there exists $\delta_0(\delta') > 1$ (depending on $\delta'$) such that  the inclusion 
\[
S_{\sigma}'(\eps,\eps',R) \hookrightarrow S_{\sigma}(\delta,\delta',\eps,\eps',R)
\]
induces an isomorphism
\begin{equation}
\label{eqn:prop:main:claim:3}
\HH^*(S_{\sigma}'(\eps,\eps',R)) \cong \HH^*(S_{\sigma}(\delta,\delta',\eps,\eps',R))
\end{equation}
for all $1 < \delta \leq \delta_0(\delta')$.
\end{customclaim}

\begin{proof}[Proof of Claim \ref{prop:main:claim:3}]
We fix $\eps > 1, \eps' >0 $ and $R>0$. First, note that it follows from \eqref{eqn:prop:main:claim:1'} in Claim \ref{prop:main:claim:1} that
\begin{equation}
\label{eqn:prop:main:claim:3.a}
\HH^*(S_{\sigma}'(\eps,\eps',R)) \cong \varinjlim_{t'} \varinjlim_{t}\HH^*(S_\sigma(t,t',\eps,\eps',R)). 
\end{equation}

By Lemma \ref{main:lem} (Part \eqref{itemlabel:main:lem:2}) 
there exists $\delta_0'$ such that for all $0 < t_2' \leq t_1' \leq \delta_0'$, the inclusion map 
\[
\bigcap_{t >1} S_{\sigma}(t,t_2',\eps,\eps',R) \hookrightarrow \bigcap_{t >1} S_{\sigma}(t,t'_1,\eps,\eps',R)
\] 
induces an isomorphism 
\[
\HH^*(\bigcap_{t > 1} S_{\sigma}(t,t_1',\eps,\eps',R)) \rightarrow \HH^*(\bigcap_{t >1}S_{\sigma}(t,t_2',\eps,\eps',R)).
\]
It follows that, for any $0 < \delta' \leq \delta_0'$.
\begin{equation}
\label{eqn:prop:main:claim:3.b}
 \varinjlim_{t'} \HH^*(\bigcap_{t >1} S_{\sigma}(t,t',\eps,\eps',R)) \cong \HH^*(\bigcap_{t >1}S_{\sigma}(t,\delta',\eps,\eps',R)) 
 \end{equation}

Moreover, it follows from  \eqref{eqn:prop:main:claim:2} that 
\begin{equation}
\label{eqn:prop:main:claim:3.c}
\HH^*(\bigcap_{t >1}S_{\sigma}(t,t',\eps,\eps',R))  \cong
\varinjlim_{t}\HH^*(S_\sigma(t,t',\eps,\eps',R))  
\end{equation}
for each fixed $t' > 0$, $\epsilon > 1$, $\epsilon' > 0$ and $R>0$.
Hence, from \eqref{eqn:prop:main:claim:3.a}, \eqref{eqn:prop:main:claim:3.b}, and \eqref{eqn:prop:main:claim:3.c}
we get an isomorphism

\begin{equation}
\label{eqn:prop:main:claim:3.d}
\HH^*(S_{\sigma}'(\eps,\eps',R)) \cong
\varinjlim_{t}\HH^*(S_\sigma(t,\delta',\eps,\eps',R))  
\end{equation}

It again follows from 
Lemma \ref{main:lem} (Part \eqref{itemlabel:main:lem:1}) that for each fixed $\delta'$,
there exists $\delta_0(\delta')$ such that for all $1 < t_2 \leq t_1 \leq \delta_0(\delta')$ the inclusion map
$S_{\sigma}(t_2,\delta',\eps,\eps',R) \hookrightarrow S_{\sigma}(t_1,\delta',\eps,\eps',R)$ induces an isomorphism 
\[
\HH^*(S_{\sigma}(t_1,\delta',\eps,\eps',R)) \rightarrow \HH^*(S_{\sigma}(t_2,\delta',\eps,\eps',R)),
\]
which implies that 
\begin{equation}
\label{eqn:prop:main:claim:3.e}
 \varinjlim_{t} \HH^*(S_{\sigma}(t,\delta',\eps,\eps',R)) \cong 
 \HH^*(S_{\sigma}(t_0,\delta',\eps,\eps',R))
\end{equation}
for all $0 < t_0 \leq \delta_0(\delta')$.
Claim \ref{prop:main:claim:3} follows from \eqref{eqn:prop:main:claim:3.d} and \eqref{eqn:prop:main:claim:3.e},
after taking $\delta'_0$ and $\delta(\delta')$ as above.
\end{proof}
 
 \begin{customclaim}{4}
 \label{prop:main:claim:4}
 The inclusions 
\[
 \bigcup_{s> 1,s'>0} S_{\sigma}'(s,s',R) \hookrightarrow \widetilde{\Reali}(\sigma,\bar{w}))  \cap \Cube_V(R))
\]
induce an isomorphism 

  \begin{equation}
\label{eqn:prop:main:claim:4}
 \HH^*(\widetilde{\Reali}(\sigma,\bar{w}))  \cap \Cube_V(R)) \cong  \varprojlim_{s',s} \HH^*(S_{\sigma}'(s,s',R)).
 \end{equation}
 As an immediate consequence we also have the isomorphism
   \begin{equation}
\label{eqn:prop:main:claim:4'}
 \HH^*(\widetilde{\Reali}(\sigma,\bar{w}))  \cap \Cube_V(R)) \cong  \varprojlim_{s'}\varprojlim_{s} \HH^*(S_{\sigma}'(s,s',R)).
 \end{equation}
 \end{customclaim}
 
 (Here the projective limit is taken over the poset $\bbR_{>1} \times \bbR_{>0}$, partially 
ordered by 
\[
(s_1,s_1') \preceq (s_2,s_2') \mbox{ if and only if }  s_2 \leq s_1 \mbox{ and } s_2' \leq s_1',
\]
and  for $(s_1,s_1') \preceq (s_2,s_2')$, 
the morphism
\[
\HH^*(S_{\sigma}'(s_2,s'_2,R)) \rightarrow
\HH^*(S_{\sigma}'(s_1,s'_1,R))
\]
is induced from the inclusion
$S_{\sigma}'(s_1,s'_1,R) \hookrightarrow S_{\sigma}'(s_2,s'_2,R)$.)

\begin{proof}[Proof of  Claim~\ref{prop:main:claim:4}]
First note that the isomorphism \eqref{eqn:prop:main:claim:4'} is an immediate consequence of
the isomorphism \eqref{eqn:prop:main:claim:4}, and the fact that 
\[
\varprojlim_{s'} \varprojlim_{s} 
\HH^*(S_{\sigma}'(s,s',R))
\cong
\varprojlim_{s,s'} 
\HH^*(S_{\sigma}'(s,s',R)).
\]
(see for example   \cite[Expose 1, page 13]{SGA4-tome1} for the last isomorphism).
Note that the semi-algebraic sets  $S_{\sigma}'(s,s',R)$ are compact for 
each choice of $s >1, s' > 0$ and $R >0$.
In order to see this, recall that by definition (see \eqref{eqn:def-of-S-sigma-prime}) 
$S_{\sigma}'(s,s',R)$ is the intersection of 
$\bigcap_{i, \sigma(i) = 1} \bigcap_{t> 1,t'>0} \Tube^{+,o}_{V,\phi(\cdot,w_i)}(t,t',R)$,
with the compact semi-algebraic set
$\bigcap_{i, \sigma(i) = 0} \bigcap_{t> 1,t'>0} \CTube^{-,c}_{V,\phi(\cdot,w_i)}(s,s',R)$. Therefore, it suffices to prove that the semi-algebraic set 
\[
\bigcap_{t> 1,t'>0} \Tube^{+,o}_{V,\phi(\cdot,w_i)}(t,t',R)
\] 
is compact
for each $i$. In general, $\phi = \vee_{h \in H} \phi^{(h)}$ where each $\phi^{(h)}$ is a conjunction of weak inequalities $|F_{jh}|<\lambda_{jh}|G_{jh}|$, $j \in J_h$ where $H$ and $J_h$ are finite sets. It follows that the semi-algebraic set $\bigcap_{t> 1,t'>0} \Tube^{+,o}_{V,\phi(\cdot,w_i)}(t,t',R)$ is the union over $H$ of the intersection over $J_h$ of the semi-algebraic sets
$$\bigcap_{t> 1,t'>0} \Tube^{+,o}_{V,|F_{jh}(\cdot,w_i)| \leq \lambda_{jh} \cdot |G_{jh}(\cdot,w_i)|}(t,t',R)$$
We claim that
\begin{equation}
\label{eqn:compact}
\bigcap_{t> 1,t'>0} \Tube^{+,o}_{V,|F_{jh}(\cdot,w_i)| \leq \lambda_{jh} \cdot |G_{jh}(\cdot,w_i)|} = \Cube_V(R) \cap 
\widetilde{\Reali}(|F_{jh}(\cdot,w_i)| \leq \lambda_{jh} \cdot |G_{jh}(\cdot,w_i)|),
\end{equation}
and the latter set is easily seen to be compact.
Verifying the equality in \eqref{eqn:compact} is an easy exercise starting from 
the definition in \eqref{eqn:def:Tube+o}. It follows that
\[
\widetilde{\Reali}(\sigma,\bar{w}))  \cap \Cube_V(R)) = \bigcup_{s>1,s'>0} S_{\sigma}'(s,s',R)
\]
where each $S_{\sigma}'(s,s',R)$ is a compact subset 
of $\widetilde{\Reali}(\sigma,\bar{w}))  \cap \Cube_V(R))$. 
We now prove that 
the family
\begin{equation}
\label{eqn:prop:main:claim4:cofinal}
\left(S_{\sigma}'(s,s',R)\right)_{s >1,s' > 0}
\end{equation}
is cofinal in the family of compact subspaces of 
\[
\widetilde{\Reali}(\sigma,\bar{w}))  \cap \Cube_V(R)) = \bigcup_{s > 1,s' > 0} S_{\sigma}'(s,s',R). 
\]
Then the isomorphism \eqref{eqn:prop:main:claim:4} will follow from
Part~\eqref{itemlabel:lem:continuity:3} of Lemma \ref{lem:continuity}.

In order to prove the cofinality statement for the family \eqref{eqn:prop:main:claim4:cofinal},
we first prove the following cofinality statement from which the cofinality of 
\eqref{eqn:prop:main:claim4:cofinal}
will follow.\\

Suppose that $I$ is a finite set, 
and let for each $i \in I$, 
$F_i,G_i \in K[X_1,\ldots,X_N]$, and $\lambda_i \in \bbR_+$.

Let $V$ and $R>0$ be as before.
We define 
\begin{eqnarray*}
\nonumber
 S^{(3)}(s,s',R) &:=& \bigcup_{i \in I} 
 \CTube^{-,c}_{V, |F_i| \leq \lambda_i \cdot  |G_i|}(s,s',R) \\
 &=& 
 \Cube_V(R) \cap \bigcup_{i \in I} 
 \widetilde{\Reali}(|(F_i| \geq s'),V), \mbox{ if $\lambda_i = 0$},\\
 \nonumber
&=& \Cube_V(R) \cap \bigcup_{i \in I} 
 \widetilde{\Reali}((|F_i| \geq  s\cdot\lambda_i \cdot  |G_i|)  \\
 \nonumber
 &&\wedge (|F_i| \geq s' \vee |G_i| \geq s'),V), 
 \mbox{ if $\lambda_i > 0$}.
\end{eqnarray*}

\begin{customclaim}{4a}
\label{prop:main:claim:4a}
The family of semi-algebraic sets 
\[
\left(S^{(3)}(s,s',R) \right)_{s > 1, s' >0}
\] 
is cofinal in the directed family of 
compact subspaces of 
\[
\bigcup_{s >1,s' > 0}S''(s,s',R).
\]
\end{customclaim}

\begin{proof}[Proof of Claim~\ref{prop:main:claim:4a}]
We use Lemma~\ref{lem:cofinal} using the function 
\[
\theta: \bigcup_{s >1,s' > 0}S^{(3)}(s,s',R) \rightarrow \bbR_{\geq 0} 
\]
defined as follows. For each $i \in I$, let 
$\theta_i: \bigcup_{s >1,s'>0} S^{(3)}(s,s',R)  \rightarrow \bbR_{\geq 0}$ be the function defined by
 \[
\theta_i(x) = H_{\lambda_i}(|F_i(x)|,|G_i(x)|)
\]
(see \eqref{eqn:prop:main:H} to recall definition of $H_{\lambda_i}(\cdot,\cdot)$),
and let
$\theta: \bigcup_{s >1,s'>0} S^{(3)}(s,s',R) \rightarrow \bbR_{\geq 0} $ be defined by
\[
\theta(x) = \max_{i \in I} \theta_i(x).
\]
One can now directly verify that $\theta$ is positive and satisfies
\eqref{eqn:lem:cofinal:property} in Lemma~\ref{lem:cofinal}, 
with the map $\lambda$ defined by 
$\lambda(\theta_0) = (1+\theta_0,\theta_0)$. We leave the details to the reader.
This concludes the proof of Claim~\ref{prop:main:claim:4a}.

\end{proof}

The proof of Claim~\ref{prop:main:claim:4} from Claim~\ref{prop:main:claim:4a}
is formally analogous to the similar derivation of 
Claim~\ref{prop:main:claim:1} from Claim~\ref{prop:main:claim:1a} and is omitted.
\end{proof}

\begin{customclaim}{5}
\label{prop:main:claim:5}

 The natural inclusions 
\[
 S_{\sigma}'(s,s',R) \hookrightarrow
\bigcup_{s > 1} S_{\sigma}'(s,s',R)
\]
induce 
for each fixed $s' > 0$ and  $R > 0$,
an isomorphism 
\begin{equation}
\label{eqn:prop:main:claim:5}
\HH^*(\bigcup_{s > 1} S_{\sigma}'(s,s',R))
  \cong \varprojlim_{s} 
\HH^*(S_{\sigma}'(s,s',R)).
\end{equation}
\end{customclaim}

\begin{proof}[Proof of  Claim~\ref{prop:main:claim:5}]
The proof is structurally similar to the proof of  Claim~\ref{prop:main:claim:4}.

We will now show 
for fixed  $s', R$, 
the family of semi-algebraic sets 
\begin{equation}
\label{eqn:prop:main:claim5:cofinal}
\left(S_{\sigma}(s,s',R) \right)_{s > 1}
\end{equation}
is a cofinal system of compact subsets of 
\[
\bigcap_{s >1}S_{\sigma}(s,s',R).
\]
 in $S$.
Assuming this fact, the claim follows from Part~\eqref{itemlabel:lem:continuity:3} of Lemma~\ref{lem:continuity}.\\

In order to prove the cofinality statement for the family \eqref{eqn:prop:main:claim5:cofinal},
we first prove the following cofinality statement from which the cofinality of 
\eqref{eqn:prop:main:claim5:cofinal}
will follow.\\

Suppose that $I$ is a finite set, 
and let for each $i \in I$, 
$F_i,G_i \in K[X_1,\ldots,X_N]$, and $\lambda_i \in \bbR_+$.
Let $V$ and  $R>0$ be as before.
We
define 
\begin{eqnarray*}
\label{eqn:def-of-S-(4)}
 S^{(4)}(s,s',R) &:=& \bigcup_{i \in I} 
 \CTube^{-,c}_{V, |F_i| \leq \lambda_i \cdot  |G_i|}(s,s',R).
 \nonumber
 \end{eqnarray*}

\begin{customclaim}{5a}
\label{prop:main:claim:5a}
The family of semi-algebraic sets 
\[
\left(S^{(4)}(s,s',R) \right)_{s > 1}
\] 
is a cofinal system of compact semi-algebraic subsets of
\[
\bigcup_{s >1}S^{(4)}(s,s',R). 
\]
\end{customclaim}

\begin{proof}[Proof of Claim~\ref{prop:main:claim:5a}]

Let for each $i \in I$,
\begin{eqnarray*}
S^{(4)}_i(s,s',R) &=&  \CTube^{-,c}_{V, |F_i| \leq \lambda_i|G_i|}(s,s',R) \\
&=&
\Cube_V(R)  \cap \widetilde{\Reali}((|F_{i}|  \geq   s'),V), \mbox{ if $\lambda_i = 0$}, \\
&=&
\Cube_V(R)  \cap \widetilde{\Reali}((|F_{i}| \geq s\cdot \lambda_i \cdot  |G_{i}|)  \\
&& \wedge ((|F_{i}|  \geq   s') \vee (|G_{i}| \geq  s')),V) \mbox{ if $\lambda_i >  0$}.
\end{eqnarray*}

Note that 
\[
S^{(4)}(s,s',R) =  \bigcup_{i \in I} S^{(4)}_i(s,s',R),
\]
and
\[
 \bigcup_{s >1}S^{(4)}(s,s',R) =  \bigcup_{i \in I} \bigcup_{s>1}S^{(4)}_i(s,s',R).
\]
Note that the cofinality statement in our claim would follow if for each $i$ we can show that 
the family of compact semi-algebraic sets  $\left(S^{(4)}_i(s,s',R) \right)_{s > 1}$ is cofinal in the family of compact subspaces of 
$\bigcup_{s >1}S^{(4)}_i(s,s',R)$. To see this, suppose that we have proven the latter cofinality statement (for each $i$). Let $C \subset  \bigcup_{s >1}S^{(4)}(s,s',R) $ be a compact subspace. Then $C_i : = C \cap \bigcup_{s>1}S^{(4)}_i(s,s',R)$ is a compact subspace and by hypothesis
for each $i \in I$,  there exists $s_{0,i} > 1$ such that $C_i \subset S^{(4)}_i(s_{0,i},s',R)$. It follows  that $C \subset S^{(4)}(s_0,s',R)$ with 
$s_0 = \min_i s_{0,i}$. \\

We now proceed to show the cofinality of  the family $\left(S^{(4)}_i(s,s',R) \right)_{s > 1}$  in the family of compact subspaces of 
$\bigcup_{s >1}S^{(4)}_i(s,s',R)$ using Lemma~\ref{lem:cofinal}. For each $i \in I$,
consider the 
continuous
function $\theta_{i}: \bigcup_{s >1}S^{(4)}_i(s,s',R) \rightarrow \bbR_+ \cup \{\infty\}$ defined by

\begin{eqnarray*}
\theta_i(x) &=& |F_i(x)| \mbox{ if $\lambda_i = 0$}, \\
\theta_{i}(x) &=&  \frac {|F_{i}(x)|}{\lambda_i|G_{i}(x)|}, \mbox{ if $\lambda_i > 0$}
\end{eqnarray*}

It is an easy exercise to check that the functions $\theta_i$ are positive and satisfy Property~
\eqref{eqn:lem:cofinal:property} in Lemma~\ref{lem:cofinal}, 
with the map $\lambda$ defined by 
$\lambda(\theta_0) = \theta_0$.
This completes the proof of Claim~\ref{prop:main:claim:5a}.
 \end{proof}

 The proof of Claim~\ref{prop:main:claim:5} follows from the proof of
 Claim ~\ref{prop:main:claim:5a}, in exactly the same manner as the proof
 of Claim~\ref{prop:main:claim:1} from Claim~\ref{prop:main:claim:1a} and is omitted.
\end{proof}

\begin{customclaim}{6}
\label{prop:main:claim:6}
Let  $R>0$. Then there exists $\eps'_0(R) > 0$ (depending on $R$), and for each $0 < \eps' \leq \eps'_0(R)$, there exists $\eps_0(\eps')>1$ (depending on $\eps'$) such that 
\begin{equation}
\label{eqn:prop:main:claim:6}
\HH^*(\widetilde{\Reali}(\sigma,\bar{w}))  \cap \Cube_V(R))) \cong \HH^*(S'_{\sigma}(\eps,\eps',R))
\end{equation}
for all $ 1 < \eps \leq \eps_0(\eps')$.

\end{customclaim}

\begin{proof}[Proof of Claim \ref{prop:main:claim:6}]
It follows from \eqref{eqn:prop:main:claim:4'} in Claim \ref{prop:main:claim:4} that
\begin{equation}
\label{eqn:prop:main:claim:6.a}
\HH^*(\widetilde{\Reali}(\sigma,\bar{w}))  \cap \Cube_V(R)))\cong \varprojlim_{s'} \varprojlim_{s}\HH^*(S_\sigma'(s,s',R)). 
\end{equation}

It follows from Lemma \ref{main:lem} (Part \eqref{itemlabel:main:lem:2}) that 
there exists $\eps_0'(R)$ such that for all $0 < s_2' \leq s_1' \leq \eps_0'(R)$, the inclusion map 
\[
\bigcup_{s>1} S_{\sigma}'(s,s_1',R) \hookrightarrow \bigcup_{s >1} S_{\sigma}'(s,s'_2,R)
\] 
induces an isomorphism 
\[
\HH^*(\bigcup_{s > 1} S'_{\sigma}(s,s_2',R)) \rightarrow \HH^*(\bigcup_{s >1}S'_{\sigma}(s,s_1',R)).
\]
It follows that 
\begin{equation}
\label{eqn:prop:main:claim:6.b}
 \varprojlim_{s'} \HH^*(\bigcup_{s>1} S_{\sigma}'(s,s',R)) \cong \HH^*(\bigcup_{s >1}S_{\sigma}'(s,\eps',R))
 \end{equation}
 for all $0 < \eps' \leq \eps_0'(R).$

Moreover, it follows from  \eqref{eqn:prop:main:claim:5} that 
\begin{equation}
\label{eqn:prop:main:claim:6.c}
\HH^*(\bigcup_{s>1}S_{\sigma}'(s,\eps',R))  \cong
\varprojlim_{s}\HH^*(S_{\sigma}'(s,\eps',R))  
\end{equation}

Hence, from \eqref{eqn:prop:main:claim:6.a}, \eqref{eqn:prop:main:claim:6.b}, and \eqref{eqn:prop:main:claim:6.c}
we get an isomorphism

\begin{equation}
\label{eqn:prop:main:claim:6.d}
\HH^*(\widetilde{\Reali}(\sigma,\bar{w}))  \cap \Cube_V(R))) \cong
\varprojlim_{s}\HH^*(S'_\sigma(s,\eps',R))  
\end{equation}

It again follows from 
Lemma \ref{main:lem} (Part \eqref{itemlabel:main:lem:1}) that for each fixed $s'$, and hence for 
$s' = \eps'$,
there exists $\eps_0(\eps') > 1$ 
such that for all $1 < s_2 \leq s_1 \leq \eps_0(\eps')$, the inclusion map
$S_{\sigma}'(s_1,\eps',R) \hookrightarrow S_{\sigma}'(s_2,\eps',R)$ induces an isomorphism 
\[
\HH^*(S'_{\sigma}(s_2,\eps',R)) \rightarrow \HH^*(S'_{\sigma}(s_1,\eps',R)),
\]
which implies that 
\begin{equation}
\label{eqn:prop:main:claim:6.e}
 \varprojlim_{s} \HH^*(S_{\sigma}'(s,\eps',R)) \cong 
 \HH^*(S_{\sigma}'(\eps,\eps',R)).
\end{equation}
for all $1 < \eps \leq \eps_0(\eps').$
Claim \ref{prop:main:claim:6} follows from \eqref{eqn:prop:main:claim:6.d} and \eqref{eqn:prop:main:claim:6.e}.
\end{proof}

We now return to the proof of Proposition~\ref{prop:main}. Using Lemma \ref{main:lem} (Part \eqref{itemlabel:main:lem:6}), we have that for large enough $R>0$, one has
 \begin{equation}
\label{eqn:prop:main:claim:7}
   \HH^*(\widetilde{\Reali}(\sigma,\bar{w}) \cap \Cube_V(R))  \cong  \HH^*(\widetilde{\Reali}(\sigma,\bar{w})).
  \end{equation}
  It follows from \eqref{eqn:prop:main:claim:7} that 
there exists $R_0 > 0$, such that for all $R \geq R_0$, 
 $\HH^*(\widetilde{\Reali}(\sigma,\bar{w}) \cap \Cube_V(R))  \cong  \HH^*(\widetilde{\Reali}(\sigma,\bar{w}))$.
 Fix $R \geq R_0$.  
 It follows from 
 \eqref{eqn:prop:main:claim:6}
 that there exists $\eps'_0(R) > 0$, and for each $0 < \eps' \leq \eps'_0(R)$, 
 there exists $\eps_0(\eps')>1$ (depending on $\eps'$) such that for all
 $1 < \eps \leq \eps_0(\eps')$,
\begin{equation}
\HH^*(\widetilde{\Reali}(\sigma,\bar{w}))  \cap \Cube_V(R))) \cong \HH^*(S_{\sigma}(\eps,\eps',R)).
\end{equation}
Fix $\eps'$ and $\eps$, satisfying  $0 < \eps' \leq \eps'_0(R)$, and $1 < \eps \leq \eps_0(\eps')$.

Now it follows from \eqref{eqn:prop:main:claim:3} that there exists $\delta'_0(\eps,\eps',R) > 0$ and for each $0< \delta' \leq \delta'_0(\eps,\eps',R)$, there exists $\delta_0(\delta') > 1$ (depending on $\delta'$) such that 
for all $1 < \delta \leq \delta_0(\delta')$,
\begin{equation*}
\HH^*(S_{\sigma}'(\eps,\eps',R)) \cong \HH^*(S_{\sigma}(\delta,\delta',\eps,\eps',R)).
\end{equation*}
Choose $\delta',\delta$ satisfying 
$0< \delta' \leq \delta'_0(\eps,\eps',R)$ and $1 < \delta \leq \delta_0(\delta')$.
It is now clear that with the above choices of $R,\eps',\eps,\delta',\delta$, we have that
\[
\HH^*(\widetilde{\Reali}(\sigma,\bar{w})) \cong \HH^*(S_{\sigma}(\delta,\delta',\eps,\eps',R)).
\]
This concludes the proof of Proposition~\ref{prop:main}.
\end{proof}

We introduce some notation before stating the next Proposition.
For $\delta,\epsilon > 1$ and $ \delta',\eps' > 0$ let 
\[
S''_{\sigma}(\delta,\delta',\eps,\eps',R)= 
\bigcap_{i,\sigma(i)=1} 
\Tube^{+,o}_{V,\phi(\cdot,w_i)}(\delta,\delta',R)
- \bigcup_{i, \sigma(i) = 0}
\Tube^{+,c}_{V,\phi(\cdot,w_i)}(\eps,\eps'). 
\]

Notice that it follows from the above definition that for all 
$\delta,\epsilon > 1$ and $ \delta',\eps' > 0$,
\[
S''_{\sigma}(\delta,\delta',\eps,\eps',R)\subset S_{\sigma}(\delta,\delta',\eps,\eps',R).
\]

Note that 
that the sets $S''_{\sigma}(\delta,\delta',\eps,\eps',R)$ and $S_{\sigma}(\delta,\delta',\eps,\eps',R)$ shrink as $\delta,\delta'$ decreases, and they grow with 
decreasing
$\eps,\eps'$. More precisely, for all $\delta_i,\delta_i',\eps_i,\eps_i', i=1,2$ satisfying
$1 < \delta_1 < \delta_2, 0 < \delta_1' < \delta_2', 1 < \eps_2 < \eps_1, 0 < \eps_2' < \eps_1'$, we have the inclusions
$$\displaylines{
	S_{\sigma}(\delta_1,\delta_1',\eps_1,\eps_1',R) \subset S_{\sigma}(\delta_2,\delta_2',\eps_2,\eps_2',R), \cr
	S''_{\sigma}(\delta_1,\delta_1',\eps_1,\eps_1',R) \subset S''_{\sigma}(\delta_2,\delta_2',\eps_2,\eps_2',R).
}
$$

\begin{proposition}
	\label{prop:connected-comp}
With notation as above, for all $\delta,\delta',\eps,\eps' \in \mathbb{R}_+$ satisfying $0<\delta -1 < \delta' < \eps -1 < \eps'$, every connected component of $S''_{\sigma}(\delta,\delta',\eps,\eps',R)$ is a connected component of the semi-algebraic set 
	\begin{eqnarray}
	\label{eqn:U}
	U_{\phi,\delta,\delta',\eps,\eps',R} &:=&  \bigcap_{1 \leq i \leq n}  (U_{i,\eps,\eps',R} \cap U_{i,\delta,\delta',R}),
	\end{eqnarray}
	where for $1 \leq i \leq n$,
	and $t>1, t' > 0$, $$U_{i,t,t',R} :=  \Cube_V(R) \setminus  \Boundary^{0,c}_{V,\phi(\cdot,w_i)}(t,t',R).$$ 
\end{proposition}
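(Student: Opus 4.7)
The strategy is to exhibit $S''_\sigma(\delta,\delta',\eps,\eps',R)$ as a clopen subset of $U_{\phi,\delta,\delta',\eps,\eps',R}$, from which the conclusion follows immediately: every connected component of a clopen subset of $U$ is a connected component of $U$. The key idea is to decompose the ambient set $U_{\phi,\delta,\delta',\eps,\eps',R}$ into $3^n$ pairwise disjoint clopen regions, one of which is precisely $S''_\sigma$.

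First I would establish the containment $S''_\sigma \subset U_{\phi,\delta,\delta',\eps,\eps',R}$. Directly from the definitions in Notation~\ref{not:tubes}, the assumptions $1 < \delta < \eps$ and $0 < \delta' < \eps'$ imply the monotonicity inclusions
\[
\Tube^{+,o}_{V,\phi(\cdot,w_i)}(\delta,\delta',R) \subset \Tube^{+,o}_{V,\phi(\cdot,w_i)}(\eps,\eps',R) \subset \Tube^{+,c}_{V,\phi(\cdot,w_i)}(\eps,\eps'),
\]
and likewise $\Tube^{+,c}(\delta,\delta') \subset \Tube^{+,c}(\eps,\eps')$. For $i$ with $\sigma(i) = 1$, points of $S''_\sigma$ lie in the open tube at level $(\delta,\delta')$, hence in the open tube at level $(\eps,\eps')$, and thus avoid $\CTube^{-,c}$ at both levels, a fortiori the boundaries at both levels. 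For $i$ with $\sigma(i) = 0$, points of $S''_\sigma$ avoid $\Tube^{+,c}(\eps,\eps') \supset \Tube^{+,c}(\delta,\delta')$, and therefore avoid the boundaries at both levels.

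Next I would decompose, for each $i$, the set $U_i := U_{i,\eps,\eps',R} \cap U_{i,\delta,\delta',R}$. Using the identity
\[
\Cube_V(R) \setminus \Boundary^{0,c}_{V,\phi(\cdot,w_i)}(t,t',R) = \Tube^{+,o}_{V,\phi(\cdot,w_i)}(t,t',R) \,\sqcup\, \bigl(\Cube_V(R) \setminus \Tube^{+,c}_{V,\phi(\cdot,w_i)}(t,t')\bigr),
\]
which is a disjoint union of open subsets of $\Cube_V(R)$ (disjoint because $\Tube^{+,o} \subset \Tube^{+,c}$), combined with the above monotonicity, a short case analysis yields
\[
U_i \;=\; A_i \sqcup B_i \sqcup C_i,
\]
where
\begin{eqnarray*}
A_i &=& \Tube^{+,o}_{V,\phi(\cdot,w_i)}(\delta,\delta',R), \\
B_i &=& \Tube^{+,o}_{V,\phi(\cdot,w_i)}(\eps,\eps',R) \setminus \Tube^{+,c}_{V,\phi(\cdot,w_i)}(\delta,\delta'), \\
C_i &=& \Cube_V(R) \setminus \Tube^{+,c}_{V,\phi(\cdot,w_i)}(\eps,\eps').
\end{eqnarray*}
Each of $A_i, B_i, C_i$ is open in $\Cube_V(R)$; being pairwise disjoint with union $U_i$, each is also closed in $U_i$.

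Intersecting over $i$, the set $U_{\phi,\delta,\delta',\eps,\eps',R} = \bigcap_i U_i$ decomposes as a disjoint union of the $3^n$ clopen pieces $\bigcap_i f(i)$, indexed by functions $f : \{1,\ldots,n\} \to \{A,B,C\}$. Inspecting the definition of $S''_\sigma$ and using the monotonicity (so that membership in $A_i$ automatically lies in the $(\eps,\eps')$-open tube, and membership in $C_i$ automatically lies outside the $(\delta,\delta')$-closed tube), one sees that
\[
S''_\sigma(\delta,\delta',\eps,\eps',R) \;=\; \bigcap_{i,\, \sigma(i) = 1} A_i \;\cap\; \bigcap_{i,\, \sigma(i) = 0} C_i,
\]
i.e., $S''_\sigma$ is precisely the piece corresponding to the function $f_\sigma(i) = A$ if $\sigma(i) = 1$ and $f_\sigma(i) = C$ if $\sigma(i) = 0$. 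In particular $S''_\sigma$ is clopen in $U_{\phi,\delta,\delta',\eps,\eps',R}$, so every connected component of $S''_\sigma$ is a connected component of $U_{\phi,\delta,\delta',\eps,\eps',R}$, as required. The main bookkeeping subtlety is verifying that the ``annular'' piece $B_i$ is genuinely disjoint from $A_i$ and that $A_i$ is unaffected by further imposing the $(\eps,\eps')$ constraint (and symmetrically for $C_i$); both reduce to the monotonicity inclusions above, which in turn are immediate from the definitions of $\phi^{+,o}$ and $\phi^{+,c}$ in Notation~\ref{not:tubes}.
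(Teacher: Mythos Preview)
Your argument is correct and takes a genuinely different route from the paper's. The paper proceeds by first constructing, for each $i$ and each pair $(t,t')$, an explicit continuous function $\theta_{i,t,t'}:B_{\bF}(V)\to\bbR$ (built out of $\max$/$\min$ combinations mirroring the Boolean structure of $\phi^{+,o}$) whose sign detects membership in the open/closed tubes and whose zero set is exactly $\Boundary^{0,c}_{V,\phi(\cdot,w_i)}(t,t',R)$. After verifying $S''_\sigma\subset U_{\phi,\delta,\delta',\eps,\eps',R}$, it takes a connected component $C$ of $S''_\sigma$, lets $D$ be the component of $U$ containing it, and shows $D=C$ by a path argument: any path in $D$ starting in $C$ cannot cross a boundary (since $D$ avoids all boundaries), so the relevant $\theta_{i,\cdot,\cdot}\circ\gamma$ never vanish and hence keep their initial sign, forcing the path to remain inside $S''_\sigma$.

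Your approach sidesteps the explicit functions and the path/intermediate-value argument entirely: you observe that each $U_{i,t,t',R}$ is already a disjoint union of two open pieces (the open tube and the complement of the closed tube), and that the monotonicity $\Tube^{+,o}(\delta,\delta')\subset\Tube^{+,c}(\delta,\delta')\subset\Tube^{+,o}(\eps,\eps')\subset\Tube^{+,c}(\eps,\eps')$\,---\,wait, actually only $\Tube^{+,*}(\delta,\delta')\subset\Tube^{+,*}(\eps,\eps')$ and $\Tube^{+,o}\subset\Tube^{+,c}$ are needed, and these suffice\,---\,collapses the $2\times 2$ intersection to the three-piece decomposition $U_i=A_i\sqcup B_i\sqcup C_i$. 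This yields that $S''_\sigma$ is one of the $3^n$ clopen cells of $U$, and you are done. Your argument is shorter and more transparent for this statement; the paper's version has the minor advantage that the functions $\theta_{i,t,t'}$ it introduces make the relation between $\Boundary^{0,c}$ and the tubes explicit in a way that echoes constructions used elsewhere in the proof of the main proposition, but nothing in the present proposition actually requires them.
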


Before proving Proposition~\ref{prop:connected-comp}, we note that Proposition~\ref{prop:connected-comp} and Proposition ~\ref{prop:main} implies:

\begin{proposition}
	\label{prop:reduce-to-open}
	For each 
	$\bar{w} \in W(K)^n 
	$, 
	there exists $\delta > 1,\delta' > 0,\eps >1, \eps'>0$, and $R >0$ such that
	for each
	$\sigma \in \{0,1\}^n$ and $0\leq i < k$,
	one has 
	\begin{equation}
	\label{eqn:prop:reduce-to-open}
	\sum_{\sigma \in \{0,1\}^n} b_i(\widetilde{\Reali}(\sigma,\bar{w})) \leq b_i( U_{\phi,\delta,\delta',\eps,\eps',R}).
	\end{equation}
\end{proposition}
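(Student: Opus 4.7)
The strategy is to combine Propositions~\ref{prop:main} and \ref{prop:connected-comp} via a sandwich argument that transfers cohomological information from the tubular set $S_\sigma$ to its open relative $S''_\sigma$, and then to exploit the pairwise disjointness of the $S''_\sigma$ together with their realization as unions of connected components of $U_{\phi,\delta,\delta',\eps,\eps',R}$ to sum the individual Betti numbers.

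First, I would fix one quadruple $(\delta,\delta',\eps,\eps')$ and one $R>0$ satisfying $0<\delta-1 \ll \delta' \ll \eps-1 \ll \eps' \ll 1$, chosen to lie inside the admissible open region of Proposition~\ref{prop:main} for every $\sigma \in \{0,1\}^n$ simultaneously (possible because $\{0,1\}^n$ is finite and the region is open and nonempty). I would then pick two further quadruples $(\delta_-,\delta'_-,\eps_-,\eps'_-)$ and $(\delta_+,\delta'_+,\eps_+,\eps'_+)$ in the same region with $\delta_- < \delta < \delta_+$, $\delta'_- < \delta' < \delta'_+$, $\eps_+ < \eps < \eps_-$, $\eps'_+ < \eps' < \eps'_-$. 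Direct manipulation of the tube definitions---using that $\Tube^{+,c}_{V,\phi(\cdot,w_i)}(t,t',R) \subseteq \Tube^{+,o}_{V,\phi(\cdot,w_i)}(t_1,t'_1,R)$ whenever $t < t_1$ and $t' < t'_1$---then yields the inclusion chain
\[
S_\sigma(\delta_-,\delta'_-,\eps_-,\eps'_-,R) \;\subseteq\; S''_\sigma(\delta,\delta',\eps,\eps',R) \;\subseteq\; S_\sigma(\delta_+,\delta'_+,\eps_+,\eps'_+,R).
\]
Iterated application of Lemma~\ref{main:lem} (each single parameter move stays inside the admissible region) shows that the composed inclusion of the two outer sets is a homotopy equivalence. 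Factoring through $S''_\sigma$, the cohomology sequence $\HH^*(S_\sigma(\delta_+,\ldots)) \to \HH^*(S''_\sigma) \to \HH^*(S_\sigma(\delta_-,\ldots))$ has an isomorphism as composite, so the second arrow is surjective. Using Proposition~\ref{prop:main} to identify both outer cohomologies with $\HH^*(\widetilde{\Reali}(\sigma,\bar{w}))$, I obtain
\[
b_i(S''_\sigma) \;\geq\; b_i(\widetilde{\Reali}(\sigma,\bar{w})) \quad \text{for every } i \geq 0.
\]

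Next, I would show that for distinct $\sigma,\tau \in \{0,1\}^n$ the sets $S''_\sigma$ and $S''_\tau$ are disjoint: at any index $i$ with, say, $\sigma(i)=1$ and $\tau(i)=0$, the conditions $\delta < \eps$ and $\delta' < \eps'$ give $\Tube^{+,o}_{V,\phi(\cdot,w_i)}(\delta,\delta',R) \subseteq \Tube^{+,c}_{V,\phi(\cdot,w_i)}(\eps,\eps')$, so the defining conditions of $S''_\sigma$ and $S''_\tau$ conflict at that coordinate. Proposition~\ref{prop:connected-comp} then ensures that every connected component of each $S''_\sigma$ is a connected component of $U_{\phi,\delta,\delta',\eps,\eps',R}$, so by disjointness $\bigsqcup_\sigma S''_\sigma$ is a disjoint union of distinct connected components of $U_{\phi,\delta,\delta',\eps,\eps',R}$. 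Additivity of singular cohomology over connected components yields
\[
\sum_{\sigma \in \{0,1\}^n} b_i(S''_\sigma) \;=\; b_i\!\left(\bigsqcup_\sigma S''_\sigma\right) \;\leq\; b_i(U_{\phi,\delta,\delta',\eps,\eps',R}),
\]
and combining with the previous inequality gives \eqref{eqn:prop:reduce-to-open}.

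The main obstacle is the sandwich step: one must verify that the cohomological identification of Proposition~\ref{prop:main} is natural with respect to monotone inclusions of parameters, so that the inclusion $S_\sigma(\delta_-,\ldots) \hookrightarrow S_\sigma(\delta_+,\ldots)$ actually induces the expected isomorphism on cohomology (rather than merely that both sides are abstractly isomorphic to $\HH^*(\widetilde{\Reali}(\sigma,\bar{w}))$). This naturality is implicit in Lemma~\ref{main:lem} because the homotopy equivalences there are realized by the inclusion maps themselves, but it must be tracked explicitly through the nested direct and inverse limits appearing in the proof of Proposition~\ref{prop:main}.
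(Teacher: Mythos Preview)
Your approach is essentially the same as the paper's: sandwich $S''_\sigma$ between two $S_\sigma$'s whose cohomologies are both identified with $\HH^*(\widetilde{\Reali}(\sigma,\bar{w}))$ via Proposition~\ref{prop:main}, deduce $b_i(\widetilde{\Reali}(\sigma,\bar{w})) \leq b_i(S''_\sigma)$, then use Proposition~\ref{prop:connected-comp} and disjointness of the $S''_\sigma$'s to sum inside $b_i(U_{\phi,\delta,\delta',\eps,\eps',R})$. The paper simplifies slightly by keeping $(\delta,\delta')$ fixed and varying only $(\eps,\eps')$: with $\eps_1<\eps_2$, $\eps_1'<\eps_2'$ one has $S_\sigma(\delta,\delta',\eps_2,\eps_2',R)\subset S''_\sigma(\delta,\delta',\eps_1,\eps_1',R)\subset S_\sigma(\delta,\delta',\eps_1,\eps_1',R)$, so only two admissible $(\eps,\eps')$-pairs are needed rather than three full quadruples.

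One caution: your appeal to ``iterated application of Lemma~\ref{main:lem}'' to conclude that the composite inclusion is a homotopy equivalence is not supported by that lemma as stated. Part~\eqref{itemlabel:main:lem:1} only handles variation of the first argument $\delta$ in $S_\sigma$; variation of $\delta'$ (Part~\eqref{itemlabel:main:lem:2}) and of $\eps,\eps'$ (Parts~\eqref{itemlabel:main:lem:3}--\eqref{itemlabel:main:lem:4}) is stated only for the limit sets $\bigcap_{t>1}S_\sigma$ and $S'_\sigma$, not for $S_\sigma$ itself. This is harmless for your argument, because what you actually need is exactly the naturality of Proposition~\ref{prop:main} under parameter-monotone inclusions, which you correctly isolate as the crux at the end; the paper relies on the same naturality (implicitly, since every isomorphism in the proof of Proposition~\ref{prop:main} is induced by an inclusion). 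Your explicit verification of pairwise disjointness of the $S''_\sigma$ is a point the paper leaves implicit.
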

\begin{proof}
By Proposition \ref{prop:main} and using the same notation as in the proof of Proposition \ref{prop:main}, we have that there exist an $R>0$, an $\eps'(R) > 0$ (depending on $R$), and for each $0 < \eps' < \eps_0'(R)$, there exists an $\eps_0(\eps') >$ such that 
\begin{equation}
\HH^*(\widetilde{\Reali}(\sigma,\bar{w}))  \cap \Cube_V(R))) \cong \HH^*(S'_{\sigma}(\eps,\eps',R)).
\end{equation}
 for all $1 < \eps \leq \eps_0(\eps')$. Fix $\eps_i'$ and $\eps_i$ ($i =1,2$), satisfying  $0 < \eps'_1 < \eps_2' \leq \eps'_0(R)$, and $1 < \eps_1 < \eps_2 \leq \eps_0(\eps')$. Now recall that it follows from \eqref{eqn:prop:main:claim:3} that there exists $\delta'_0(\eps_i,\eps_i',R) > 0$ and for each $0< \delta' \leq \delta'_0(\eps_i,\eps_i',R)$, there exists $\delta^{(i)}_0(\delta') > 1$ (depending on $\delta'$ and $\delta'_0(\eps_i,\eps_i',R)$)  such that 
for all $1 < \delta \leq \delta_0^{(i)}(\delta')$,
\begin{equation*}
\HH^*(S_{\sigma}'(\eps_i,\eps_i',R)) \cong \HH^*(S_{\sigma}(\delta,\delta',\eps_i,\eps_i',R)).
\end{equation*}
Let $\delta'$ be such that 
\[
0< \delta' \leq \min(\delta'_0(\eps_1,\eps_1',R),\delta'_0(\eps_1,\eps_1',R))
\] 
and 
\[
1 < \delta \leq \min(\delta^{(1)}_0(\delta'),\delta^{(2)}_0(\delta')).
\]
With the above choices of $R,\eps_i',\eps_i,\delta',\delta$, we have
\[
\HH^*(\widetilde{\Reali}(\sigma,\bar{w})) \cong \HH^*(S_{\sigma}(\delta,\delta',\eps_i,\eps_i',R)).
\]
On the other hand, let $T_i = S_{\sigma}(\delta,\delta',\eps_i,\eps_i',R)$ and $T_i'' = S''_{\sigma}(\delta,\delta',\eps_i,\eps_i',R)$ . Then $T_2 \subset T_1'' \subset T_1$, and the by the previous remarks the natural map
\[
\rH^{i}(T_1) \rightarrow \rH^{i}(T_2)
\]
is an isomorphism. On the other hand, this map factors through $\rH^{i}(T_1'')$ and therefore the natural map
\[
\rH^{i}(T_1'') \rightarrow \rH^{i}(T_1)
\]
is surjective. It follows that $b_i(T_1) \leq b_i(T_1'')$. Since the connected components of the $T_1''$ (as $\sigma$ varies) are connected components of $U_{\phi,\delta,\delta',\eps,\eps',R}$ (by Proposition \ref{prop:connected-comp}), the inequality 
\eqref{eqn:prop:reduce-to-open} follows immediately.
\end{proof}

\begin{proof}[Proof of Proposition \ref{prop:connected-comp}]
Without any loss of generality 
we will assume that
$\phi$ is a 
disjunction of the formulas $\phi_h, h \in H$, where $H$ is a finite set, and each $\phi_h$ is a 
conjunction of  weak inequalities $|F_{hj}| \leq \lambda_{hj} |G_{hj}|, j \in J_h$, 
where 
$J_h$ is a finite set. As before for each $i$ we let $F_{ihj} : = F_{hj}(\cdot,w_i)$, $G_{ihj}:= G_{hj}(\cdot,w_i)$.  \\

We first observe that $S''_{\sigma}(\delta,\delta',\eps,\eps',R) \subset U_{\phi,\delta,\delta',\eps,\eps',R}$.
To see this, for $t'>0, t > 1$,  and $i \in [1,n]$, let $\theta_{i,t,t'}: B_{\bF}(V) \rightarrow \bbR$ be the continuous function defined by 

\begin{eqnarray}
\label{eqn:proof:prop:connected-comp:theta:0}
\theta_{i,t,t'}(x) &=& \max_{h \in H} \min_{j \in J_h} 
\mu_{i,h,j,t,t'}(x),
\end{eqnarray}
where 
\begin{eqnarray*}
\mu_{i,h,j,t,t'}(x)& = & t' - |F_{ihj}(x)|, \mbox{ if $\lambda_{hj} = 0$}, \\
&=& \max(\lambda_j \cdot t \cdot  |G_{ihj}(x)| - |F_{ihj}(x)|,  \\
&&  \hspace{1.2in} \min(t' - |F_{ihj}(x)|, t' - |G_{ihj}(x)|)), \mbox{ if $\lambda_{h j} > 0$}.
\end{eqnarray*}
The formula defining $\theta_{i,t,t'}$ might seem a little formidable at first glance, but becomes
easier to understand with the observation that 
each occurrence of $\max$ and $\min$ in  \eqref{eqn:proof:prop:connected-comp:theta:0} corresponds to 
an occurrence of respectively $\bigvee$ and $\bigwedge$ in the formula $\phi^{+,o}(\cdot;T,T')$ (cf. Notation~\ref{not:tubes}). With this observation,
and the obvious facts that for any  $A \subset \bbR$, 
\begin{eqnarray*}
\bigvee_{a \in A} (a > 0)  &\Leftrightarrow& \max_{a \in A} a  > 0 , \\
\bigwedge_{a \in A} (a > 0)  &\Leftrightarrow&  \min_{a \in A} a  > 0, 
\end{eqnarray*} 
it is easy to verify that
\begin{eqnarray*}
x  \in \Tube^{+,o}_{V,\phi(\cdot,w_i)}(\delta,\delta') \Leftrightarrow
\theta_{i,\delta,\delta'}(x) > 0, \\
x  \in \Tube^{+,c}_{V,\phi(\cdot,w_i)}(\delta,\delta') \Leftrightarrow
\theta_{i,\delta,\delta'}(x) \geq 0,
\end{eqnarray*}
and finally that for any $R > 0$,
\begin{eqnarray}
\label{eqn:proof:prop:connected-comp:theta:1}
&&\\
\nonumber
x  \in \Boundary^{+,c}_{V,\phi(\cdot,w_i)}(\delta,\delta',R) \Leftrightarrow
x \in \Cube_V(R) \wedge (\theta_{i,\delta,\delta'}(x) = 0).
\end{eqnarray}

Now let $x \in S''_{\sigma}(\delta,\delta',\eps,\eps',R)$.
Then, for each $i$ with $\sigma(i) = 1$, 
$x \in \Tube^{+,o}_{V,\phi(\cdot,w_i)}(\delta,\delta',R)$, and hence 
$x \not\in \Boundary^{0,c}_{V,\phi(\cdot,w_i)}(\delta,\delta',R)$. \\

One can also check, using the fact that 
$\delta' < \eps'$ and $\delta < \eps$, that 
$\theta_{i,\delta,\delta'}(x) > 0$ implies that $\theta_{i,\eps,\eps'}(x)  > 0$ as well.
This in turn implies that 
\[
x \in \Tube^{+,o}_{V,\phi(\cdot,w_i)}(\delta,\delta',R) \implies
x \not\in \Boundary^{0,c}_{V,\phi(\cdot,w_i)}(\eps,\eps',R).
\]
Hence, we have that  
\[
x \not\in \Boundary^{0,c}_{V,\phi(\cdot,w_i)}(\delta,\delta',R) \cup \Boundary^{0,c}_{V,\phi(\cdot,w_i)}(\eps,\eps',R)
\]
for all $i$ with $\sigma(i) = 1$. In particular, $x \in U_{i,\eps,\eps',R} \cap U_{i,\delta,\delta',R}$. \\

We now consider the case of all $i$ such that $\sigma(i) = 0$. Suppose that $\sigma(i) = 0$.
Then, 
$x \in \Cube_V(R) - \Tube^{+,c}_{V,\phi(\cdot,w_i)}(\eps,\eps',R)$, and hence 
$x \not\in \Boundary^{0,c}_{V,\phi(\cdot,w_i)}(\eps,\eps',R)$.

Also, if 
$x \not\in \Tube^{+,c}_{V,\phi(\cdot,w_i)}(\eps,\eps',R)$, 
then 
$x \not\in \Boundary^{0,c}_{V,\phi(\cdot,w_i)}(\delta,\delta',R)$, 
since clearly 
\[
\Boundary^{0,c}_{V,\phi(\cdot,w_i)}(\delta,\delta',R) \subset \Tube^{+,c}_{V,\phi(\cdot,w_i)}(\eps,\eps',R),
\]
and hence 
$x \not\in \Boundary^{0,c}_{V,\phi(\cdot,w_i)}(\delta,\delta',R)$ 
either.
Hence, we have that  
\[
x \not\in \Boundary^{0,c}_{V,\phi(\cdot,w_i)}(\delta,\delta',R) \cup \Boundary^{0,c}_{V,\phi(\cdot,w_i)}(\eps,\eps',R)
\]
for all $i$ with $\sigma(i) = 0$. Combining everything, we have $x \in U_{\phi,\delta,\delta',\eps,\eps',R}$.\\

Now let $C$ be a connected component of $S''_{\sigma}(\delta,\delta',\eps,\eps',R)$, and $D$ be the connected component of $U_{\phi,\delta,\delta',\eps,\eps',R}$ containing $C$. We claim that $D = C$. Let $x \in D$, and let $y$ be any point of $C$. Then, since $y \in D$ and $D$ is path connected, there exists a path $\gamma:[0,1] \rightarrow D$, with
$\gamma(0) = y$ and $\gamma(1) = x$, and $\gamma([0,1]) \subset D$. We claim that 
$\gamma([0,1]) \subset S''_{\sigma}(\delta,\delta',\eps,\eps',R)$, which immediately implies that
$D = C$. \\

We first show that for each $i$ with $\sigma(i) = 1$, 
$\gamma([0,1]) \subset  \Tube^{+,o}_{V,\phi(\cdot,w_i)}(\delta,\delta',R)$. 
Consider for each $i$ with $\sigma(i) = 1$,  the continuous function
$\theta_{i}: [0,1] \rightarrow \bbR$ defined by 
\[
\theta_{i}(t) = 
\theta_{i,\delta,\delta'}(\gamma(t)).
\]

Notice that it follows from \eqref{eqn:proof:prop:connected-comp:theta:1} that
$\theta_{i}(t) = 0$ implies that 
\[
\gamma(t) \in  \Boundary^{0,c}_{V,\phi(\cdot,w_i)}(\delta,\delta',R).
\]

Moreover, since 
\[
\gamma([0,1]) \subset \Cube_V(R) \setminus \Boundary^{0,c}_{V,\phi(\cdot,w_i)}(\delta,\delta',R)
\] for each $i$, 
$\theta_i$
cannot vanish anywhere on $[0,1]$. Also notice that $\theta_i(t) > 0$ if and only if 
$\gamma(t) \in  \Tube^{+,o}_{V,\phi(\cdot,w_i)}(\delta,\delta',R)$. 
Since, $\gamma(0) = y  \in S''_{\sigma,\delta,\delta',\eps,\eps',R}$,
this implies that $\theta_i(0) > 0$, and hence $\theta_i(t) > 0$, for each $t \in [0,1]$, and hence
\[
\gamma([0,1]) \subset \bigcap_{i,\sigma(i) = 1} \Tube^{+,o}_{V,\phi}(\delta,\delta',R).
\]

Finally, we show that 
\[
\gamma([0,1]) \subset \bigcap_{i,\sigma(i) = 0} \left(\Cube_V(R) \setminus \Tube^{+,c}_{V,\phi(\cdot,w_i)}(\eps,\eps',R) \right).
\]

Consider for each $i$ with $\sigma(i) = 1$,  the continuous function
$\mu_{i}: [0,1] \rightarrow \bbR$ defined by 
\[
\mu_{i}(t) = 
- \theta_{i,\eps,\eps'}(\gamma(t)).
\]

Notice that $\mu_{i}(t) = 0$ implies that 
$\gamma(t) \in  \Boundary^{0,c}_{V,\phi(\cdot,w_i)}(\eps,\eps',R)$, 
and hence 
since 
$\gamma([0,1]) \subset \Cube_V(R) \setminus \Boundary^{0,c}_{V,\phi(\cdot,w_i)}(\eps,\eps',R)$ 
for each $i$, 
$\theta_i$
cannot vanish anywhere on $[0,1]$. Moreover, also notice that $\mu_i(t) > 0$ if and only if 
$\gamma(t) \in  \Tube^{+,o}_{V,\phi(\cdot,w_i)}(\eps,\eps',R)$. 
Since, $\gamma(0) = y  \in S''_{\sigma}(\delta,\delta',\eps,\eps',R)$,
this implies that $\mu_i(0) > 0$, and hence $\mu_i(t) > 0$, for each $t \in [0,1]$, and hence
\[
\gamma([0,1]) \subset \bigcap_{i,\sigma(i) = 0} \left(\Cube_V(R)  -  \Tube^{o,+}_{V,\phi(\cdot,w_i)}(\eps,\eps',R) \right).
\]
This proves that $D= C$. 
\end{proof}

Let $ X \subset V$ be a definable subset where $V$ is an affine variety of dimension 
$k$, and $U_1,\ldots,U_n$ open semi-algebraic subsets 
of $B_{\bF}(X)$.
For $J \subset [1,n]$, we denote by $U^J := \bigcup_{j \in J} U_j$ and $U_J := \bigcap_{j \in J} U_j$.
We have the following proposition,
which is very similar to \cite[Proposition 7.33, Part (ii)]{BPRbook2}. 

\begin{proposition}
\label{prop:spectral-inequality}
With notation as above, for each $i$, $0 \leq i \leq k$, 
\[
b_i(U_{[1,n]}) \leq \sum_{j=1}^{k-i}\sum_{J \subset [1,n],\card(J) = j} b_{i+j-1}(U^J) + \binom{n}{k-i}b_k(B_{\bF}(V)).
\]
\end{proposition}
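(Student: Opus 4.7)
The plan is to prove the inequality by strong induction on $n$, combining the Mayer--Vietoris long exact sequence for pairs of open sets with the cohomological dimension bound supplied by Hrushovski--Loeser (cf.\ Appendix~\ref{subsec:HL}): every semi-algebraic subset $Z \subset B_\bF(V)$ is homotopy equivalent to a finite simplicial complex of dimension at most $k = \dim V$, so $H^q(Z) = 0$ for $q > k$. The strategy parallels that of \cite[Proposition 7.33(ii)]{BPRbook2} in the real semi-algebraic setting, and the summand $\binom{n}{k-i} b_k(B_\bF(V))$ will emerge as the contribution of the top cohomological degree where the naive Mayer--Vietoris recursion bottoms out.

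The key auxiliary estimate is that $b_k(Z) \leq b_k(B_\bF(V))$ for every open semi-algebraic subset $Z \subset B_\bF(V)$. I will deduce this from the long exact sequence of the pair $(B_\bF(V), Z)$,
\[
\cdots \to H^k(B_\bF(V)) \to H^k(Z) \to H^{k+1}(B_\bF(V), Z) \to H^{k+1}(B_\bF(V)) = 0,
\]
by showing that $H^{k+1}(B_\bF(V), Z)$ vanishes: identifying this with the compactly-supported cohomology of the closed semi-algebraic complement $B_\bF(V) \setminus Z$ in degree $k+1$ and invoking the cohomological dimension bound (after, if necessary, first passing to a large compact cube $\Cube_V(R)$) forces it to vanish, yielding surjectivity of $H^k(B_\bF(V)) \to H^k(Z)$. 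The case $i = k$ of the proposition is then immediate by applying this estimate to $Z = U_{[1,n]}$.

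For $i < k$, I induct on $n$; the case $n = 1$ is trivial. For the inductive step, the Mayer--Vietoris long exact sequence applied to $A = U_{[1,n-1]}$ and $B = U_n$ yields
\[
b_i(U_{[1,n]}) \leq b_i(U_{[1,n-1]}) + b_i(U_n) + b_{i+1}(U_{[1,n-1]} \cup U_n).
\]
Distributivity gives $U_{[1,n-1]} \cup U_n = \bigcap_{j=1}^{n-1}(U_j \cup U_n)$, recasting the last term as $b_{i+1}$ of an intersection of $n-1$ open sets, to which the inductive hypothesis at degree $i+1$ applies (along with the inductive hypothesis for $b_i(U_{[1,n-1]})$ at degree $i$). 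Using $\bigcup_{l \in J'}(U_l \cup U_n) = U^{J' \cup \{n\}}$ and re-indexing $J = J' \cup \{n\}$, the Betti number contributions match precisely: subsets $J \subset [1,n]$ with $n \notin J$ come from $b_i(U_{[1,n-1]})$, those with $n \in J$ and $|J| \geq 2$ come from the rewritten intersection, and $J = \{n\}$ corresponds to $b_i(U_n)$. Pascal's identity $\binom{n-1}{k-i} + \binom{n-1}{k-i-1} = \binom{n}{k-i}$ combines the two boundary coefficients to yield exactly the claimed bound. The main obstacle lies in the auxiliary estimate: verifying the vanishing of $H^{k+1}(B_\bF(V), Z)$ requires handling the identification with compactly-supported cohomology carefully, most cleanly by reducing to a compact cube where the Hrushovski--Loeser cohomological dimension bound yields the vanishing directly; once this is secured, the remainder is a clean formal induction via Mayer--Vietoris, distributivity, and Pascal's identity.
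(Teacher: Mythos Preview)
Your proposal is correct and follows essentially the same approach as the paper: Mayer--Vietoris applied to $U_{[1,n-1]}$ and $U_n$, the distributivity identity $U_{[1,n-1]} \cup U_n = \bigcap_{j<n}(U_j \cup U_n)$, induction on $n$, and Pascal's identity to combine the binomial terms. Your auxiliary estimate $b_k(Z) \leq b_k(B_\bF(V))$ is exactly what the paper records as Corollary~\ref{cor:finite-simplicial}\eqref{itemlabel:cor:finite-simplicial:d}, proved there by the same mechanism you describe (reduce to a compact cube, then use that the cohomological dimension of the complement is $\leq k$ to kill $\HH^{k+1}$ of the pair); the only organizational difference is that the paper inducts on $n$ for all $i$ simultaneously, whereas you dispose of $i=k$ first and then induct for $i<k$, which is equivalent.
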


\begin{proof}
We first prove the claim when $n = 1$. If $0 \leq i \leq k - 1$, the claim
is
\[ b_i (U_1) \leq 
   b_i (U_1) + b_{k} (B_{\bF}(V)), 
 \]
which is clear. If $i = k$, 
the claim is $b_{k} (U_1) \leq
b_{k} (B_{\bF}(V))$, which is true using Part \eqref{itemlabel:cor:finite-simplicial:d} of 
Corollary \ref{cor:finite-simplicial}.\\

The claim is now proved by induction on $n$. Assume that the induction
hypothesis holds for all $n - 1$ open semi-algebraic subsets of $B_{\bF}(V)$,
and for all $0 \le i \le k$.

It follows from the standard Mayer-Vietoris sequence that
\begin{equation}
  \label{7:eq:propb1} 
b_i (U_{[1,n]}) \leq b_i (U_{[1,n-1]}) + b_i (U_n) + b_{i + 1} (U_{[1,n-1]} \cup U_n) .
\end{equation}

Applying the induction hypothesis to the set $U_{[1,n-1]}$, we deduce that
\begin{eqnarray}
\label{7:eq:propb2}
  b_i (U_{[1,n-1]}) & \leq & \sum_{j = 1}^{k - i}
  \sum_{J \subset [1,n-1], \card(J) = j}  b_{i + j - 1} (U^J)\\
  \nonumber
  & + & \binom{n - 1}{k - i}  b_{k} (B_{\bF}(V)) .
\end{eqnarray}
Next, applying the induction hypothesis to the set,
\[ U_{[1,n-1]} \cup U_n = \bigcap_{1 \leq j \leq n - 1} (U_j \cup U_n), \]
we get that
\begin{eqnarray}
  \label{7:eq:propb3} b_{i + 1} (U_{[1,n-1]} \cup U_n) & \leq &
  \sum_{j = 1}^{k - i - 1} 
  \sum_{
    J \subset [1,n-1], \card(J) = j}
b_{i + j} (U^{J \cup \{n\}})
  \nonumber\\
  & + & \binom{n - 1}{k - i - 1}  b_{k} (B_{\bF}(V)). 
\end{eqnarray}

We obtain from inequalities \eqref{7:eq:propb1}, \eqref{7:eq:propb2}, and \eqref{7:eq:propb3} that

\[ b_i (U_{[1,n]}) \leq \sum_{j = 1}^{k - i}
   \sum_{
     J \subset [1,n], \card(J) = j}
   b_{i + j - 1} (U^J) + \binom{n}{k - i} 
   b_{k} (B_{\bF}(V)),
\]
which finishes the induction.
\end{proof}

\begin{proof}[Proof of Theorem \ref{thm:main}]
Using Proposition \ref{prop:reduce-to-open} we obtain that
for each $i,0 \leq i \leq k$,
\begin{equation}
\label{eqn:proof:thm:main:1}
\sum_{\sigma \in \{0,1\}^n} b_i(\widetilde{\Reali}(\sigma,\bar{w}))  \leq 
b_i( U_{\phi,\delta,\delta',\eps,\eps',R}).
\end{equation}
From the definition of $U_{\phi,\delta,\delta',\eps,\eps',R}$ in \eqref{eqn:U}, we have that 
$U_{\phi,\delta,\delta',\eps,\eps',R}$ is an intersection
of  the sets 
\[
\Cube_V(R) \setminus \Boundary^{0,c}_{V,\phi(\cdot,w_i)}(\eps,\eps',R), 
\]
\[
\Cube_V(R) \setminus \Boundary^{0,c}_{V,\phi(\cdot,w_i)}(\delta,\delta',R), 
\]
for $1 \leq i \leq n$.

Now for each $j \geq 1$ and $J', J'' \subset [1,j]$ with $J' \cup J'' = [1,j], J' \cap J'' = \emptyset$, 
let
\[
\Phi_{J',J''}(\overline{X}, \overline{Y}^{(1)},\ldots,\overline{Y}^{(j)}; s,s',t,t',R)
\] 
denote the conjunction of the following formulas: 
$$\displaylines{
 \bigvee_{j \in J'} \neg (\phi^{+,c}(\overline{X}, \overline{Y}^{(j)};s,s') \vee  \phi^{+,o}(\overline{X}, \overline{Y}^{(j)};s,s')), \cr
\bigvee_{j \in J''} (\phi^{+,c}(\overline{X}, \overline{Y}^{(j)};t,t') \vee \phi^{+,o}(\overline{X}, \overline{Y}^{(j)};t,t')), \cr
\Phi_{V}(\overline{X};R), \cr
\bigvee_{j \in [1,j]}\Phi_{W}(\overline{Y}^{(j)}).
}
$$
where $\Phi_{V,R}(\overline{X};R)$ is a formula such that 
$\Cube_V(R) = \widetilde{\Reali}(\Phi_{V,R})$, and
$\Phi_{W}(\overline{Y})$ is a formula such that 
$B_\bF(W) = \widetilde{\Reali}(\Phi_{W})$.\\

Note that it follows from Notation~\ref{not:tubes}, that for each $j, 1 \leq j \leq n$,
the semi-algebraic set 
\[
\widetilde{\Reali}(
\neg (\phi^{+,c}(\overline{X}, \overline{Y}^{(j)};\cdot,\cdot) \vee  \phi^{+,o}(\overline{X}, w_j;\cdot,\cdot)),V) \cap \Cube_V(R)
\]
is equal to the set 
\[
\Boundary^{0,c}_{V,\phi(\cdot,w_j)}(\cdot,\cdot,R).
\]
It follows that for any  
\[
J' = (j_1',\ldots,j'_{\card(J')}), J'' = (j_1'',\ldots,j''_{\card(J'')}) \subset [1,n]\] 
with $J' \cup J'' = [1,n], J' \cap J'' = \emptyset$,
the semi-algebraic set
\[
\widetilde{\Reali}(\Phi_{J',J''}(\cdot,w_{j_1'},\cdots,w_{j'_{\card(J')}}, w_{j''_1},\cdots, w_{j''_{\card(J'')}}; \eps,\eps',\delta,\delta',R)
\] 
is equal to the
union of the two sets
\[
\bigcup_{j \in J'} (\Cube_V(R) \setminus \Boundary^{0,c}_{V,\phi(\cdot,w_i)}(\eps,\eps',R))
\]
and
\[ 
\bigcup_{j \in J''} (\Cube_V(R) \setminus \Boundary^{0,c}_{V,\phi(\cdot,w_i)}(\delta,\delta',R)).
\]

Denote by $X_{J',J''}$ the definable subset of $V \times W^j \times \bbR^5$ defined by the
formula 
\[
\Phi_{J',J''}(\overline{X}, \overline{Y}^{(1)},\ldots,\overline{Y}^{(j)}; s,s',t,t',R),
\]
and let 
\[
\pi_{j,J',J''}:  X_{J',J''}
\rightarrow W^{J' \cup J''}  \times \bbR^5
\]
  denote the projection. 
It follows from 
Theorem~\ref{prop:finite-homotopy-types} (with  $Y = W^{j}$, $V$ viewed as a quasi-projective variety in $\bbP^{N}$ and $X_{J',J''}$ as above)
that the number of homotopy types amongst
the semi-algebraic sets 
$$B_\bF(\pi_{j,J',J''}^{-1}(w'_1,\ldots,w'_j,s,s',t,t',R))$$ is finite, and moreover since each such fiber is homotopy equivalent to a finite simplicial complex by Theorem~\ref{prop:finite-simplicial},
there exists a finite bound
$C_{i,j,J',J''} \in \bbZ_{\geq 0}$, such that
\[
 b_i(B_\bF(\pi_{j,J',J''}^{-1}(w'_1,\ldots,w'_j,s,s',t,t',R)) \leq C_{i,j,J',J''},
\]
for all  $(w'_1,\ldots,w'_j) \in W(K)^j, s,s',t,t',R \in \bbR$.

Let
\[
C_{i,j} = \max_{J',J'', J'\cup J'' = [1,j],  J'\cap J'' = \emptyset} C_{i,j,J',J''}.
\]
Note that $C_{i,j}$ depend only on $V$ and $\phi$.
Also, since 
each $j$-ary union amongst the 
the semi-algebraic sets 
\[
\Cube_V(R) \setminus \Boundary^{0,c}_{V,\phi(\cdot,w_i)}(\eps,\eps',R),
\]
\[
\Cube_V(R) \setminus \Boundary^{0,c}_{V,\phi(\cdot,w_i)}(\delta,\delta',R), 
\]
is clearly homeomorphic to one of the sets
$B_\bF(\pi_{j,J',J''}^{-1}(w'_1,\ldots,w'_j,s,s',t,t',R))$
the $i$-th Betti number
of every such union is  bounded by $C_{i,j}$.

It now follows from \eqref{eqn:proof:thm:main:1} and Proposition~\ref{prop:spectral-inequality}
that
\[
\sum_{\sigma \in \{0,1\}^n} b_i(\widetilde{\Reali}(\sigma,\bar{w})) \leq 
\sum_{j=1}^{k-i} \binom{2 n}{j} C_{i+j-1,j} + \binom{n}{k-i}b_k(B_{\bF}(V)).
\]
The theorem follows.

\end{proof}

\subsection{Proof of Theorem~\ref{thm:0-1}}
\label{subsec:proof-of-0-1}
We need a couple of preliminary results of a set-theoretic nature starting with the following observation.

\begin{observation}
\label{obs:extension}
Let $Y,Y',V,V',W,W'$ be sets such that $Y \subset V \times W$, $Y' \subset V' \times W'$, $V \subset V'$,
$W \subset W'$,  and $Y' \cap (V\times W)  =Y$. Then,  
for every $n > 0$,
\[
\bigchi_{Y,V,W}(n) \leq  \bigchi_{Y',V',W}(n).
\]
\end{observation}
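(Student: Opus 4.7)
The proof is essentially a matter of unpacking the definitions of the function $\bigchi$, and no substantial obstacle is expected; the one point that deserves care is to verify that the hypothesis $Y' \cap (V \times W) = Y$ exactly makes the ``slice functions'' $Y_w$ and $Y'_w$ agree on the overlap.

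First I would fix a tuple $\bar{w} = (w_1,\ldots,w_n) \in W^n$, which, via $W \subset W'$, is also an element of $W'^n$. The next step is the pointwise comparison: for every $v \in V$ (hence $v \in V'$) and every index $i$, I would check that
\[
v \in Y_{w_i} \iff (v,w_i) \in Y \iff (v,w_i) \in Y' \iff v \in Y'_{w_i},
\]
where the middle equivalence uses precisely that $(v,w_i) \in V \times W$ and $Y' \cap (V \times W) = Y$. This gives the coordinate-wise identity
\[
\bigchi_{Y,V,W;n}(v,\bar{w}) = \bigchi_{Y',V',W;n}(v,\bar{w})
\]
for every $v \in V$.

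From this identity and the inclusion $V \subset V'$ I would conclude
\[
\bigchi_{Y,V,W;n}(V,\bar{w}) = \bigchi_{Y',V',W;n}(V,\bar{w}) \subset \bigchi_{Y',V',W;n}(V',\bar{w}),
\]
and therefore
\[
\card\bigl(\bigchi_{Y,V,W;n}(V,\bar{w})\bigr) \le \card\bigl(\bigchi_{Y',V',W;n}(V',\bar{w})\bigr).
\]
Finally, taking the maximum of both sides over $\bar{w} \in W^n$ yields
\[
\bigchi_{Y,V,W}(n) \le \max_{\bar{w} \in W^n} \card\bigl(\bigchi_{Y',V',W;n}(V',\bar{w})\bigr) = \bigchi_{Y',V',W}(n),
\]
which is the desired inequality. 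The hardest (really the only) step is the coordinate-wise agreement in the second paragraph, and it is immediate from the set-theoretic hypothesis on $Y, Y', V, W$.
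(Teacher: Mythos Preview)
Your proof is correct and is essentially the paper's own argument spelled out in more detail: the paper simply observes that $Y'_w \cap V = Y_w$ for all $w \in W$ (your coordinate-wise agreement), so any $0/1$ pattern realized by $(Y_{w_1},\ldots,Y_{w_n})$ in $V$ is also realized by $(Y'_{w_1},\ldots,Y'_{w_n})$ in $V'$.
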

\begin{proof}
To see this note that a $0/1$ pattern is realized by the tuple $(Y_{w_1},\ldots,Y_{w_n})$ in $V$, only if it is realized by 
the tuple $(Y_{w_1}',\ldots,Y_{w_n}')$ in $V'$. This follows from the fact that $Y' \cap (V\times W)  =Y$, and therefore for all $w \in W$, 
$Y'_w \cap V = Y_w$.
\end{proof}

Let $V,W$ be sets,
$I$ a finite set,  and for each $\alpha \in I$, let $X_\alpha$ be a subset of $V \times W$.
Let $i_\alpha:X_\alpha \hookrightarrow V \times W$ denote the inclusion map.
Suppose that $X$ is a subset of $V \times W$  obtained as a Boolean combination of the $X_\alpha$'s. 
Let $W' = \coprod_{\alpha \in I} W$, and for $\alpha \in I$ we $j_\alpha: W \hookrightarrow W'$ denote the canonical inclusion.
Let
$X' = \bigcup_{\alpha \in I} \mathrm{Im}((1_V \times j_\alpha )\circ i_\alpha) \subset V \times W'$. 
With this notation we have the following proposition.

\begin{proposition}
\label{prop:vcd'}
\[
\bigchi_{X,V,W}(n)  \leq \bigchi_{X',V,W'}( \card(I) \cdot n).
\] 
\end{proposition}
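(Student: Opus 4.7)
The plan is to show that any 0/1 pattern realized on a tuple $\bar{w} \in W^n$ by the family $\{X_w\}_{w \in W}$ is determined by a 0/1 pattern realized on an associated tuple of length $\card(I)\cdot n$ in $W'$ by the family $\{X'_{w'}\}_{w' \in W'}$. The cardinality inequality will then follow by taking a max over $\bar w$.

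First I would fix the Boolean combination. Since $X$ is a Boolean combination of the $X_\alpha$'s, there exists a Boolean function $\beta: \{0,1\}^I \to \{0,1\}$ such that for every $(v,w) \in V\times W$,
\[
(v,w) \in X \iff \beta\bigl( ([(v,w) \in X_\alpha])_{\alpha \in I}\bigr) = 1,
\]
and in particular $v \in X_w \iff \beta\bigl(([v \in (X_\alpha)_w])_{\alpha \in I}\bigr) = 1$. Enumerate $I = \{\alpha_1,\ldots,\alpha_m\}$ with $m = \card(I)$.

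Next, given any $\bar{w} = (w_1,\ldots,w_n) \in W^n$, I would associate the tuple $\bar{w}' \in (W')^{mn}$ whose $(i,k)$-entry (for some fixed ordering) is $j_{\alpha_k}(w_i)$. By the definition of $X'$, for each $v \in V$ we have $v \in X'_{j_{\alpha_k}(w_i)}$ iff $v \in (X_{\alpha_k})_{w_i}$. Hence
\[
\bigl(\bigchi_{X',V,W';mn}(v,\bar{w}')\bigr)_{(i,k)} = [v \in (X_{\alpha_k})_{w_i}],
\]
and consequently
\[
\bigl(\bigchi_{X,V,W;n}(v,\bar{w})\bigr)_i = \beta\bigl(([v\in (X_{\alpha_k})_{w_i}])_{k=1}^m\bigr),
\]
so there is a function $F:\{0,1\}^{mn}\to \{0,1\}^n$ (applying $\beta$ block by block) with
\[
\bigchi_{X,V,W;n}(v,\bar{w}) = F\bigl(\bigchi_{X',V,W';mn}(v,\bar{w}')\bigr)\quad\text{for every } v\in V.
\]

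Finally, this identity gives a surjection from the set of patterns realized on $\bar{w}'$ onto the set of patterns realized on $\bar{w}$, so
\[
\card\bigl(\bigchi_{X,V,W;n}(V,\bar{w})\bigr) \leq \card\bigl(\bigchi_{X',V,W';mn}(V,\bar{w}')\bigr) \leq \bigchi_{X',V,W'}(mn).
\]
Taking the maximum over $\bar w \in W^n$ yields $\bigchi_{X,V,W}(n) \leq \bigchi_{X',V,W'}(mn)$, as claimed. I do not foresee a real obstacle here; the only care required is in making the correspondence between $\bar w$ and $\bar w'$ explicit and noting that the Boolean combination is \emph{uniform} in $w$, so that a single Boolean function $\beta$ suffices and the factorization through $F$ is legitimate.
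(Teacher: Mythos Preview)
Your proof is correct and follows essentially the same approach as the paper: both construct the same tuple $\bar{w}' = (j_{\alpha}(w_i))_{\alpha,i}$ in $(W')^{\card(I)\cdot n}$ and argue that the $X$-pattern at $v$ on $\bar{w}$ is determined by the $X'$-pattern at $v$ on $\bar{w}'$. The only stylistic difference is that you package this determination as an explicit factorization through a Boolean function $F$, whereas the paper phrases it contrapositively (distinct $X$-patterns force distinct $X'$-patterns); these are equivalent, and your formulation is arguably cleaner.
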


\begin{proof}
For $v \in V$, and $S \subset W$  (resp. $S' \subset W'$)  we set $S_v  :=  S \cap X_v$ (resp. $S'_v := S' \cap X_{v}'$). 
Let $\bar{w} \in W^n$.
We claim that for $v,v' \in V$,  
\begin{align*}
\bigchi_{X,V,W;n}(v,\bar{w}) \neq \bigchi_{X,V,W;n}(v',\bar{w})  \implies &&\\
\bigchi_{X',V,W';\card(I)\cdot n}(v, j_n(\bar{w})) \neq \bigchi_{X',V,W';\card(I)\cdot n}(v', j_n(\bar{w})),&&
\end{align*}
where 
$j_n : W^{[1,n]} \rightarrow W'^{I \times [1,n]}$ is defined by 
\[
j_n(w_1,\ldots,w_n)_{(\alpha,i)} = j_\alpha(w_i).
\] 
To prove the claim first observe that since $\bigchi_{X,V,W;n}(v,\bar{w}) \neq \bigchi_{X,V,W;n}(v',\bar{w})$, there exists $i \in [1,n]$
such that 
$v \in X_{w_i}  \Leftrightarrow  v' \not\in X_{w_i}$.  \\

Since $X$ is a Boolean combination of the 
$X_\alpha, \alpha \in I$, there must exist $\alpha \in I$ such that 
$v  \in (X_\alpha)_{w_i} \Leftrightarrow  v' \not\in (X_\alpha)_{w_i}$.
It now follows from the definition of $X',W'$ that 
$\bigchi_{X',V,W';\card(I) \cdot n}(v,j_n(\bar{w})) \neq \bigchi_{X',V,W';\card(I) \cdot n}(v',j_n(\bar{w}))$.
This implies that 
\[
\card(\bigchi_{X,V,W;n}(V,\bar{w})) \leq \card(\bigchi_{X',V,W';\card(I) \cdot n}(V,j_n(\bar{w}))).
\]
It follows immediately that
\[
\bigchi_{X,V,W}(n)  \leq \bigchi_{X',V,W'}( \card(I) \cdot n).
\] 
\end{proof}

\begin{proof}[Proof of Theorem~\ref{thm:0-1}]
We make two reductions. We first claim that it suffices to prove the theorem in the case of an algebraically closed complete valued field of rank one  
i.e. the value group  subgroup of the multiplicative group $\mathbb{R}_+$. Secondly,
we claim that we can assume without loss of generality that the formula $\phi$ is in disjunctive normal form without negations and
with atoms of the form $|F| \leq \lambda \cdot |G|$.\\

\noindent {\em Reduction to complete algebraically closed field of rank one:}
The theory of algebraically closed valued fields in the two sorted language $\cL$ becomes complete once we fix the
characteristic of the field and that of the residue field. Moreover, for each such characteristic pair $(0,0)$, $(0,p)$, or
$(p,p)$ ($p$ a prime) there exists a model $(K;\Gamma)$ of the theory of algebraically closed valued field such that the value group is a multiplicative subgroup of
$\mathbb{R}_+$ (i.e. of rank one) and $K$ is complete. It follows by a standard transfer argument it suffices to prove the theorem for such a model.
 \\

\noindent
{\em Reduction to the case of disjunctive normal form without negations and with atoms of the form $|F| \leq \lambda \cdot |G|$:}
We now observe that it suffices to prove the theorem in the case when the formula $\phi$ is
equivalent to a formula in disjunctive normal form without negations with atoms of the form $|F| \leq \lambda  \cdot |G|$. 
Furthermore, 
using the first reduction, we may assume that the value group is $\bbR_+$ and $K$ is an algebraically closed complete valued field. In particular,
we assume that the atoms of $\phi$ are of the form $|F| \leq \lambda  \cdot |G|$, with $\lambda \in \bbR_+$, and 
$F,G,\in K[\overline{X},\overline{Y}]$. 
Let $(\phi_\alpha)_{\alpha \in I}$ be the finite tuple of (closed) atomic formulas appearing in $\phi$. Denote by
\[
\phi'' = \left(\bigvee_{\alpha \in I} \left( \phi_\alpha(\overline{X},\overline{Y}^{(\alpha)}) \wedge (|Z_\alpha -1| = 0)\right)\right) \wedge
\bigvee_{\alpha \in I} \theta_\alpha ((Z_\alpha)_{\alpha \in I}),
\]
where $\theta_\alpha((Z_\alpha)_{\alpha \in I})$ is the  closed formula 
\[
(|Z_\alpha - 1| = 0) \wedge \bigwedge_{\beta \neq \alpha}(|Z_\beta| = 0).
\]

Note that  $\phi''$ is
equivalent to a formula in disjunctive normal form without negations and with atoms of the form $|F| \leq \lambda \cdot |G|$. \\

Let $X_{\alpha} :=\Reali(\phi_{\alpha},V\times W)(K)$ and $X = \Reali(\phi,V\times W)(K)$. Then $X$ is a Boolean combination of the $X_{\alpha}$'s and we can define $X' \subset V(K) \times W(K)'$ where 
$X'$ and $W(K)'$ are defined as in Proposition \ref{prop:vcd'}. In particular, we let $\pi_1: X' \rightarrow V(K)$ and $\pi_1': X' \rightarrow W(K)'$ denote the natural projection maps. Similarly, we let
\[
\pi_2'': \Reali(\phi'',V\times W \times \bbA^{|I|})(K) \rightarrow W(K) \times \bbA^{|I|}(K)
\]
and 
\[
\pi_1'': \Reali(\phi'',V\times W \times \bbA^{|I|})(K) \rightarrow V(K) 
\]
denote the natural projection maps. Note that the diagram
\[
\xymatrix{
&\Reali(\phi'',V\times W \times \bbA^{|I|})(K)  \ar[ld]^{\pi_1''} \ar[rd]^{\pi_2''}& \\
V(K) &&  \mathrm{Im}(\pi_2'') 
}
\]
is isomorphic to the diagram
\[
\xymatrix{
&X' \ar[ld]^{\pi_1'} \ar[rd]^{\pi_2'}& \\
V(K) &&  \mathrm{Im}(\pi_2') 
}
\]  
By isomorphism, we mean that there are natural bijections $\Reali(\phi'',V\times W \times \bbA^{|I|})(K) \rightarrow X'$ and $\mathrm{Im}(\pi_2'') \rightarrow \mathrm{Im}(\pi_2')$ making the resulting morphism of diagrams above commute(with identity as the map on $V(K)$). \\

Using Proposition~\ref{prop:vcd'}, we get that
\begin{eqnarray*}
\bigchi_{\Reali(\phi, (V \times W))(K),V(K),W(K)}(n) & \leq&
\bigchi_{X',V(K),(W(K))'}(\card(I) \cdot n),
\end{eqnarray*}
and the right hand side of the above inequality clearly equals
\[
\bigchi_{\Reali(\phi'', (V \times W \times \bbA^{|I|}))(K),V(K),W(K) \times \bbA^{|I|}(K)}(\card(I) \cdot n).
\]
So it suffices to prove that there exists a constant $C$ (depending only on $V$ and $\phi$) such that for all $n$,
\[
\bigchi_{\Reali(\phi'', (V \times W \times \bbA^{|I|}))(K),V(K),W(K) \times \bbA^{|I|}(K)}(n) \leq C \cdot  n^{\dim(V)}.
\]
This shows that we can assume that  $\phi$ is
equivalent to a formula in disjunctive normal form  without negations and with atoms of the form $|F| \leq \lambda \cdot |G|$. \\

We now use the special case of Theorem \ref{thm:main} obtained by setting $i=0$. 
In that case, $b_0(\widetilde{\Reali}(\sigma,\bar{w}))$ is the number of connected components, which is at least one as soon as  $\widetilde{\Reali}(\sigma,\bar{w})$ is non-empty. 
Now use Observation \ref{obs:extension} with $V' = B_{\bF}(V)$, $Y' = \bigcup_{w \in W(K)} \widetilde{\Reali}(\phi(\cdot,w), V \times W)$ and 
$Y = \Reali(\phi,(V \times W))(K)$, noting that there exists a canonical injective map 
$\iota: V(K) \hookrightarrow B_{\bF}(V)$
such that for each $w \in W(K)$ the following diagram of injective maps commutes:
\[
\xymatrix{
V(K)\ar[r]^{\iota_V}   & B_{\bF}(V) \\
\Reali(\phi(\cdot,w),V)(K)\ar[r] \ar[u] & \widetilde{\Reali}(\phi(\cdot,w),V)\ar[u]
}
\]
This finishes the proof.
\end{proof}

\subsection{Proof of Corollary~\ref{cor:vcd-acvf}}
\label{subsec:proof-of-vcd-acvf}
\begin{proof}[Proof of Corollary~\ref{cor:vcd-acvf}]
Corollary~\ref{cor:vcd-acvf} follows immediately from Theorem~\ref{thm:0-1} and the  following
proposition  (Proposition~\ref{prop:vcd}) which is well known, 
but whose proof we include for the sake of completeness.
\end{proof}

\begin{proposition}
\label{prop:vcd}
Suppose that there exists a constant $C>0$ such that for all $n > 0$, 
$ \bigchi_{X,V,W}(n)  \leq C \cdot n^{k}$.
Then,
$\vcd(X,V,W) \leq k$.
\end{proposition}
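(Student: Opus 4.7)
The plan is to extract, from the polynomial bound on $\bigchi_{X,V,W}$, a matching polynomial bound on the shatter function $\pi_{\mathcal{S}}$ of the family $\mathcal{S} = \{X_v \mid v \in V\} \subset 2^W$, and then apply the definition of $\vcd_{\mathcal{S}}$ directly. The whole argument is a matter of reconciling the tuple-indexed quantity $\bigchi_{X,V,W}$ with the set-indexed quantity $\pi_{\mathcal{S}}$.

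First I would unwind the definitions to compare $\pi_{\mathcal{S}}(n)$ and $\bigchi_{X,V,W}(n)$. For $\bar{w} = (w_1,\ldots,w_n) \in W^n$ and $v \in V$, the vector $\bigchi_{X,V,W;n}(v,\bar{w}) \in \{0,1\}^n$ has $i$-th coordinate equal to $1$ exactly when $v \in X_{w_i}$, which by the definitions $X_{w_i} = \pi_V(\pi_W^{-1}(w_i))$ and $X_v = \pi_W(\pi_V^{-1}(v))$ is equivalent to $w_i \in X_v$. Consequently, when the $w_i$ are pairwise distinct and $A = \{w_1,\ldots,w_n\}$, the correspondence $\bigchi_{X,V,W;n}(v,\bar{w}) \leftrightarrow \{w_i : \bigchi_{X,V,W;n}(v,\bar{w})_i = 1\} = A \cap X_v$ is a bijection between $\bigchi_{X,V,W;n}(V,\bar{w}) \subset \{0,1\}^n$ and $\{A \cap X_v : v \in V\}$.

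Next, given any $A \subset W$ with $\card(A) = n$ realizing the max in the definition of $\pi_{\mathcal{S}}(n)$, enumerating its elements as a tuple $\bar{w}$ yields $\pi_{\mathcal{S}}(n) = \card(\{A \cap X_v : v \in V\}) = \card(\bigchi_{X,V,W;n}(V,\bar{w})) \leq \bigchi_{X,V,W}(n)$. (The degenerate case $\card(W) < n$, if it ever occurs, makes $\pi_{\mathcal{S}}(n)$ eventually constant and hence the conclusion trivial.) Combined with the hypothesis $\bigchi_{X,V,W}(n) \leq C \cdot n^k$, this gives $\pi_{\mathcal{S}}(n) \leq C \cdot n^k$, from which
\[
\vcd_{\mathcal{S}} = \limsup_{n \to \infty} \frac{\log \pi_{\mathcal{S}}(n)}{\log n} \leq \limsup_{n \to \infty} \left( \frac{\log C}{\log n} + k \right) = k
\]
follows immediately, and by definition $\vcd(X,V,W) = \vcd_{\mathcal{S}} \leq k$.

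I do not expect any genuine obstacle: the content of the proposition is essentially tautological once the relationship between $\bigchi_{X,V,W}$ and the shatter function is made explicit. The only minor care needed is in handling tuples with repeated coordinates versus sets of cardinality $n$; since repeating a coordinate in $\bar{w}$ only forces matching bits in the pattern and so can only decrease the number of realized patterns relative to the distinct case, the inequality $\pi_{\mathcal{S}}(n) \leq \bigchi_{X,V,W}(n)$ goes in the direction we need.
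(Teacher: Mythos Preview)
Your proof is correct and follows essentially the same route as the paper's: both arguments unwind the equivalence $w \in X_v \Leftrightarrow v \in X_w$ to identify the shatter function with $\card(\bigchi_{X,V,W;n}(V,\bar{w}))$, and then apply the definition of $\vcd$. Your version is in fact slightly more careful, making explicit the bijection between $0/1$ patterns and traces $A\cap X_v$ and commenting on the harmless discrepancy between tuples and sets.
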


\begin{proof}
Notice that for $v \in V$ and $w \in W$, $w \in X_v \Leftrightarrow v \in X_w$.
Let $\mathcal{S} = \{X_v \mid v \in V\}$, and
$A = \{w_1,\ldots,w_n\} \subset W$, and $I \subset [1,n]$. For $v \in V$,
$w_i \in X_v$ for all $i \in I$, and $w_i \not\in X_v$ for all $i \in [1,n]\setminus I$ if and only if
$v \in X_{w_i}$ for all $i \in I$, and $v \not\in X_{w_i}$ for all $i \in [1,n]\setminus I$.
This implies that 
\[
\card(\{A \cap Y \mid Y \in \mathcal{S}\}) = \bigchi_{X,V,W;n}(V,\bar{w}) \leq C \cdot  n^k.
\]
The proposition now follows from Definition \ref{def:vcd}.
\end{proof}

\appendix
\section{}
\label{sec:appendix}

\subsection{Review of Singular Cohomology}\label{subsec:cohomology}
In this section we recall some basic statements about singular cohomology groups which are used throughout this article. These facts are all standard and we refer  the reader to \cite{Spanier} for 
their proofs.\\

Given any topological space $X$, one can associate to $X$ the singular cohomology groups $\rH^{i}(X,\bbQ)$ (for $i \geq 0$) which satisfy the following general properties (see for example \cite[page 238-240]{Spanier}):\\

\begin{properties}\ \\
\begin{enumerate}[1.]
\item The $\rH^{i}(X,\bbQ)$ are $\bbQ$-vector spaces. If $X$ is a finite dimensional simplicial complex of dimension $n$, then each $\rH^{i}(X,\bbQ)$ is finite dimensional, and moreover $\rH^{i}(X,\bbQ) = 0$ for all $i > n$. 
\item The singular cohomology groups are contravariant and homotopy invariant i.e. a continuous morphism $f: X \rightarrow Y$ induces a linear map $f^*: \rH^{i}(Y,\bbQ) \rightarrow \rH^{i}(X,\bbQ)$, and if $f$ is a homotopy equivalence, then the induced map $f^{*}$ is an isomorphism. 
\item (Connected components) The dimension of $\rH^{0}(X,\bbQ)$ equals the number of connected components of $X$.
\item For any subspace $Y \subset X$, one can define relative cohomology groups 
\[
\rH^{i}(X,Y; \bbQ)\]
 which fit into a long exact sequence:
\begin{equation*}
\cdots \rightarrow  \rH^{i}(X,Y; \bbQ) \rightarrow \rH^i(X,\bbQ) \rightarrow \rH^i(Y, \bbQ) \rightarrow \rH^{i+1}(X,Y;\bbQ) \rightarrow \cdots
\end{equation*}
\item If $U, V \subset X$ are open subsets such that $U \cup V = X$, then there is a long exact sequence of cohomology groups:
\begin{equation*}
\cdots \rightarrow  \rH^{i}(X, \bbQ) \rightarrow \rH^i(U,\bbQ) \oplus \rH^i(V,\bbQ) \rightarrow \rH^i(U \cap V, \bbQ) \rightarrow \rH^{i+1}(X,\bbQ) \rightarrow \cdots
\end{equation*}
\end{enumerate}
\end{properties}

Finally, we recall some properties of singular cohomology with regards to projective and injective limits. These properties are used in the proof of Proposition \ref{prop:main}. Below, we drop the coefficients $\bbQ$ from the notation of singular cohomology groups. \\

Let $I$ be a directed set, $(U_i)_{i \in I}$ be a directed system of topological spaces, and 
\[
U= \varinjlim_{i} U_i
\] 
denote the corresponding direct limit. In particular, for all $i \leq j $ ($i,j \in I$), we have continuous maps $f_{ij}; U_i \rightarrow U_j$ which induce morphisms $f^*_{ij}: \rH^k(U_j) \rightarrow \rH^k(U_i)$. 
The latter cohomology groups form an inverse system, and the natural continuous maps $U_i \rightarrow U$ induce a morphism 
$$\rH^{k}(U) \rightarrow \varprojlim_i \rH^{k}(U_i).$$ 

Similarly, an inverse system $(U_i)_{i \in I}$ of topological spaces gives rise to a direct system of corresponding cohomology groups and natural morphism 
$$\varinjlim_{i} \rH^{k}(U_i) \rightarrow \rH^{k}(U),$$ 
where 
\[
U = \varprojlim_i U_i.
\]\\

In this article, we only consider direct systems $U_i$ given by an increasing sequences of subspaces of a space $X$ or inverse systems $U_i$ given by a decreasing sequence of subspaces.
In the former case, the direct limit $U$ is given by the union of these spaces, and in the latter case the inverse limit is given by the intersection of these subspaces. The following lemma is our main tool for understanding the corresponding cohomology groups.

\begin{lemma}
\label{lem:continuity}
Let $X$ be a paracompact Hausdorff space
having the homotopy type of a finite simplicial complex, and $I$ a directed set.
\begin{enumerate}[1.]
\item 
\label{itemlabel:lem:continuity:1}
Let $\{U_{i}\}_{i \in I }$ be  
a decreasing
sequence of open subspaces of $X$, and $S : = \bigcap_i U_{i}$. Suppose that the family $U_i$ is cofinal in the family of open neighborhoods of $S$ in $X$. Then the natural map $$\varinjlim_i \rH^{k}(U_i) \rightarrow \rH^k(S)$$ is an isomorphism.

\item 
\label{itemlabel:lem:continuity:3}
Let $\{C_{i}\}_{i \in I}$ be an increasing sequence of 
compact 
subspaces of $S$, and $S : = \bigcup_i C_{i}$. Suppose that the family $C_i$ is cofinal in the family of compact subspaces of $S$. Then the natural map $$\rH^{k}(S) \rightarrow \varprojlim_i \rH^{k}(C_i)$$ is an isomorphism.
\end{enumerate}
\end{lemma}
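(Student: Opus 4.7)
The plan is to prove both parts by reducing to standard continuity properties of singular cohomology: Part (1) via \v{C}ech cohomology, and Part (2) via a cochain-level identification combined with a Milnor $\varprojlim^{1}$ exact sequence.

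For Part (1), I would pass from singular to \v{C}ech cohomology, where continuity under cofinal systems of open neighborhoods is almost tautological: \v{C}ech cohomology is by construction a direct limit over open covers, and cofinality of the $U_{i}$ among open neighborhoods of $S$ allows every open cover of $S$ to be realized as the restriction of a cover of some $U_{i}$. This yields
\begin{equation*}
\check{\rH}^{k}(S) \;\cong\; \varinjlim_{i}\, \check{\rH}^{k}(U_{i}).
\end{equation*}
The hypothesis that $X$ has the homotopy type of a finite simplicial complex makes $X$ an ANR and forces the relevant subspaces to be paracompact Hausdorff and sufficiently tame, so singular and \v{C}ech cohomology with $\bbQ$-coefficients agree on both sides, converting the \v{C}ech statement into the desired singular one.

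For Part (2), I would work at the cochain level. Any singular $k$-simplex $\sigma \colon \Delta^{k} \to S$ has compact image in $S$, so by cofinality of the $C_{i}$ in the family of compact subspaces of $S$, $\sigma$ factors through some $C_{i}$. This produces a natural isomorphism of cochain complexes
\begin{equation*}
C^{*}_{\mathrm{sing}}(S) \;\cong\; \varprojlim_{i}\, C^{*}_{\mathrm{sing}}(C_{i}),
\end{equation*}
and Milnor's $\varprojlim^{1}$ exact sequence for inverse limits of cochain complexes then yields
\begin{equation*}
0 \to \varprojlim\nolimits^{1}_{i}\, \rH^{k-1}(C_{i}) \to \rH^{k}(S) \to \varprojlim_{i}\, \rH^{k}(C_{i}) \to 0.
\end{equation*}

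The hard part is the final step, namely the vanishing of the $\varprojlim^{1}$ term, which amounts to Mittag--Leffler for the inverse system $\{\rH^{k-1}(C_{i})\}$. I would verify this via finite-dimensionality: each $C_{i}$ is a compact subspace of $X$, and under the standing hypotheses (finite simplicial homotopy type of $X$, together with the concrete tameness of the $C_{i}$ in the intended applications to semi-algebraic subsets of Berkovich spaces), each $\rH^{k-1}(C_{i})$ is a finite-dimensional $\bbQ$-vector space. The decreasing chain of images $\mathrm{Im}(\rH^{k-1}(C_{j}) \to \rH^{k-1}(C_{i}))$ for $j \geq i$ then stabilizes by dimension considerations, giving Mittag--Leffler and the desired vanishing. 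A minor loose end is that Milnor's sequence is typically stated for countable inverse systems, so for a general directed $I$ one first reduces to a cofinal $\bbN$-indexed subsystem.
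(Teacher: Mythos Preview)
Your Part~(2) is essentially correct but takes an unnecessarily hard road, and the paper's argument both is simpler and sidesteps the gap you yourself flag. Rather than working at the cochain level and invoking a Milnor $\varprojlim^{1}$ sequence, the paper dualizes from singular \emph{homology}: one has $\HH_k(S,\bbQ) \cong \varinjlim_i \HH_k(C_i,\bbQ)$ because singular chains are compactly supported (this is the standard fact from Spanier), and over the field $\bbQ$ universal coefficients give $\HH^k(-,\bbQ) \cong \Hom_\bbQ(\HH_k(-,\bbQ),\bbQ)$ naturally. Since $\Hom_\bbQ(-,\bbQ)$ converts colimits into limits, one obtains
\[
\HH^k(S,\bbQ) \;\cong\; \Hom_\bbQ\bigl(\varinjlim_i \HH_k(C_i,\bbQ),\,\bbQ\bigr) \;\cong\; \varprojlim_i \Hom_\bbQ(\HH_k(C_i,\bbQ),\bbQ) \;\cong\; \varprojlim_i \HH^k(C_i,\bbQ)
\]
directly, and one checks this is the natural map. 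No $\varprojlim^{1}$ term ever appears, no countability of $I$ is needed, and---crucially---no finite-dimensionality of $\HH^{k-1}(C_i)$ is required. This matters, because nothing in the stated hypotheses guarantees that a compact subspace $C_i$ of $X$ has finite Betti numbers: a compact subset of a space merely homotopy equivalent to a finite complex can be quite wild. Your appeal to ``the intended applications'' is honest but is not a proof of the lemma as stated.

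Your Part~(1) has a more serious gap. The assertion ``$X$ has the homotopy type of a finite simplicial complex, hence $X$ is an ANR'' is false: being an ANR is invariant under homeomorphism, not homotopy equivalence. Without an ANR-type hypothesis you have no control over the local structure of $X$, of the $U_i$, or of $S = \bigcap_i U_i$, and the identification of \v{C}ech with singular cohomology on $S$ is precisely the delicate point---continuity for \v{C}ech cohomology along a cofinal system of neighborhoods is indeed tautological, but transferring it back to singular cohomology on a possibly pathological intersection $S$ is exactly what needs to be proved. The paper does not attempt to argue this and simply cites an external tautness result for singular cohomology.
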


\begin{proof}[Proof of Part \eqref{itemlabel:lem:continuity:1}]
This is Theorem 5 in \cite{LR}.
\end{proof}
\begin{proof}[Proof of Part \eqref{itemlabel:lem:continuity:3}]
The statement follows from the fact that singular homology of any space is isomorphic to the direct limit of 
the singular homology of its compact subspaces \cite[Theorem 4.4.6]{Spanier}, the fact that 
the singular cohomology group $\HH^*(S,\bbQ)$ is canonically isomorphic to $\Hom(\HH_*(S,\bbQ),\bbQ)$ since $\bbQ$ is a field, and that the dual of a direct limit of finite dimensional vector spaces is the inverse limit of the duals of those vector spaces. 
\end{proof}

\subsection{Recollections from Hrushovski-Loeser}
\label{subsec:HL}
In this section we recall some results from the theory of non-archimedean tame topology due  to Hrushovski and Loeser \cite{HL}. The main reference for this section is Chapter 14 of \cite{HL}, but we refer the reader to \cite{Ducros} for an excellent survey. In particular, we will deal with the model theory of valued fields.  We denote by $K$ a complete valued field with values in the ordered multiplicative group 
 of the positive real numbers. \\
 
We consider a two sorted language with the two sorts corresponding to valued fields and the value group. The signature of this two sorted language will be
\[
(0,1,+_K, \times_K, | \cdot |:K \rightarrow \bbR_+, \leq_{\bbR_+}, \times_{\bbR}),
\]
where the subscript $K$ denotes constants, functions, relations etc., of the field sort and the subscript $\bbR_+$ denotes the same for the value group sort. When the context is clear we will drop the subscripts.\\

We denote by $|\cdot |$ the  valuation written multiplicatively. The valuation $|\cdot|$ satisfies:
\begin{eqnarray*}
|x+y| &\leq& \max\{|x|,|y|\}, \\
|x \cdot y| &=&  |x| |y|, \\
|0| &=& 0.
\end{eqnarray*}

\begin{remark}
Note that we follow Berkovich's convention and write our valuations multiplicatively. In particular, the terminology `valuation' is somewhat abusive, and here we really mean a non-archimedean absolute value. In \cite{HL}, all valuations are written additively. 
\end{remark}

Following \cite[\S 14.1]{HL}, we will denote by $\bF$ the two sorted structure $(K;\bbR_+)$ viewed as a substructure of a model of ACVF. Given a quasi-projective variety $V$ defined over $K$ and an $\bF$-definable subset $X$ of $V \times \bbR^n_+$, Hrushovski and Loeser  \cite{HL} 
associate to $X$ (functorially) a topological space $B_\bF(X)$. By definition, this is the space of types, in $X$, defined over $\bF$ which are almost orthogonal to the definable set $\bbR_+$. Given a variety $V$ as above, we say that subset $Z \subset B_\bF(V)$ is {\it semi-algebraic} if it is of the form $B_\bF(X)$ for an $\bF$-definable subset $X \subset V$. We note that $X$ itself can be identified in $B_\bF(X)$ as the set of simple types, and hence there is a canonically defined injection 
$X \hookrightarrow B_{\bF}(X)$.\\

We now recall a description of the spaces $B_\bF(X)$ in some special cases and some of their properties; these are the only properties which are used in this article.

\begin{properties}\label{enum:BF_prop} \ \\
\begin{enumerate}[1.]
\item\label{enum:BF_prop1}(\cite{HL}, 14.4.1) 
For every $\bF$-definable set $X$, 
$B_\bF(X)$ is a Hausdorff topological space which is locally path connected. This construction is functorial in definable maps i.e. a definable map $f: X \rightarrow Y$ induces a continuous map of the corresponding topological spaces. 

\item\label{enum:BF_prop2}(\cite{HL}, 14.1, pg. 194) If $V$ is an affine variety and $X \subset V$ a definable subset, then $B_\bF(X)$ is a subspace of $B_\bF(V)$.
In fact, it is a semi-algebraic subset (in the sense of Berkovich spaces, see Property \ref{enum:BF_prop3} below).

\item\label{enum:BF_prop3}(\cite{HL}, 14.1, pg. 194)  Suppose $X$ is an affine variety $\Spec(A)$. In this case, $B_\bF(X)$ can be identified with the Berkovich analytic space associated to $X$. Its points can be described in terms of multiplicative semi-norms as follows. A point of $B_\bF(X)$ is a multiplicative map $\phi: A \rightarrow \bbR_+$ such that $\phi(a+b) \leq \max(\phi(a),\phi(b))$. 

\item\label{enum:BF_prop4} With $X= \Spec(A)$, the topology on $B_\bF(X)$ is the one inherited from viewing it as a natural subset of $\bbR^{A}_+$. If $f \in A$, then $f$ gives rise to a {\it continuous} function
	$$|f|: B_\bF(X) \rightarrow \bbR_+$$ defined as follows:
	$$|f|(\phi) = \phi(f) \in \bbR_+.$$ This follows from the previous observation and the definition of the topology on Berkovich analytic spaces. 

\item\label{enum:BF_prop7}((\cite{HL}, 14.1, pg. 194) Let $V = \Spec(A)$. Then any formula $\phi$ of the form $f \bowtie g$, where $f,g \in A$, $\lambda \in \bbR_+$ and $\bowtie \in \{\leq, <,\geq, >\}$ gives a definable subset $X$ of $V$, and therefore a semi-algebraic subset $B_\bF(X)$ of $B_\bF(V)$. It can be described in the language of valuations as the set $\{ x \in B_\bF(V) | f(x) \bowtie \lambda g(x)\}$. In general, the semi-algebraic subset associated to a Boolean combination of such formulas is the corresponding Boolean combination of the semi-algebraic subsets associated to each formula. Moreover, a subset of $B_\bF(V)$ is semi-algebraic if an only if it is a Boolean combination of subsets of the form $\{ x \in B_\bF(X) | f(x) \bowtie \lambda g(x)\}$, where $f,g \in A$, $\lambda \in \bbR_+$ and $\bowtie \in \{\leq, <,\geq, >\}$.

\item\label{enum:BF_prop8}(\cite{HL}, 14.1.2)  If $X$ is an $\bF$-definable subset of an algebraic variety $V$, then $B_\bF(X)$ is compact if and only if $B_\bF(X)$ is closed in $B_F(V')$ where $V'$ is a complete algebraic variety. 

\item\label{enum:BF_prop9} Suppose $V = \Spec(A) \subset \bbA_K^N$ is an affine subvariety, and 
$\phi(X;T)$ (with $X =(X_1,\ldots,X_N)$)
a formula with parameters in $(K;\bbR_+)$. Here $X$ are free variable of the field sort and $T$ is a free variable of the value sort. Suppose $a \in \bbR_+$ such that
for all  $t,t'$ satisfying, $a <t < t'$, $(K;\bbR_+) \models \phi(X;t') \rightarrow \phi(X,t)$. Let
$\psi(X)$ be the formula 
\[
\exists T  (T > a) \wedge \phi(X,T).
\]
Then, 
\[
\widetilde{\Reali}(\psi,V) = \bigcup_{a < t} \widetilde{\Reali}(\phi(\cdot;t),V).
\]

\begin{proof}[Proof of Property~\ref{enum:BF_prop9}]
The inclusion $\bigcup_{a < t} \widetilde{R}(\phi(\cdot;t),V) \subset \widetilde{\Reali}(\psi,V)$ is obvious,
since for each $t > a$, $(K;\mathbb{R}_+) \models \phi(X,t) \rightarrow \psi(X)$, which implies that 
$\widetilde{R}(\phi(\cdot;t),V) \subset \widetilde{R}(\psi(\cdot),V)$.

To prove the reverse inclusion, let $p \in \widetilde{\Reali}(\psi,V)$. Then, by definition $p$ is a type which is 
almost orthogonal to the value group, and  moreover,  there exists $x \in \Reali(\psi,V)(K')$, such that $x \models p$ 
and $(K',\bbR_+)$ is an elementary extension of $(K;\mathbb{R})$. Hence, there exists $t_0 >a$, such that
$(K',\bbR_+) \models \phi(x,t_0)$, and so $p \in \widetilde{\Reali}(\phi(\cdot,t_0),V)$.
This proves that  
\[
\widetilde{\Reali}(\psi,V) \subset \bigcup_{a < t} \widetilde{\Reali}(\phi(\cdot;t),V).
\]
\end{proof}

\end{enumerate}
\end{properties}

Given an $\bF$-definable 
map $f:X \rightarrow \bbR_+$, we will denote by $B_\bF(f): B_\bF(X)\rightarrow B_\bF(\bbR_+) = \bbR_+$ the induced map. We will say that $B_\bF(f)$ is a \emph{semi-algebraic} map.\\

The following theorems which are easily deduced from the main theorems in \cite[Chapter 14]{HL} will play a key role in the results of this paper. We will use the same notation as above.

\begin{theorem-appendix}\cite[Theorem 14.4.4]{HL}
\label{prop:sublevel}
Let $V$ be a quasi-projective variety over $K$,  $X \subset V$ be an $\bF$-definable subset and $f:X \rightarrow \bbR_+$ be an $\bF$-definable map. 
For $t \in \bbR_+$, let $B_\bF(X)_{\geq t}$ denote the semi-algebraic subset 
$B_\bF(X \cap (f \geq t)) = B_\bF(X) \cap (B_\bF(f) \geq t)$ of $B_\bF(V)$.
Then, there exists a finite partition $\cP$ of $\bbR_+$ into intervals, such that for each $I \in \cP$ and 
for all $\eps \leq \eps' \in I$,
the inclusion $B_\bF(X)_{\geq \eps'} \hookrightarrow B_\bF(X)_{\geq \eps}$ is a homotopy equivalence.
\end{theorem-appendix}

\begin{theorem-appendix}\cite[Theorem 14.3.1, Part (1)]{HL}
\label{prop:finite-homotopy-types}
Let $Y$ be a variety and $X \subset Y \times \bbR_+^r \times \bbP^m $ be an $\bF$-definable set. Let $\pi: X \rightarrow Y \times \bbR_+^r $ be the projection map. Then there are finitely many homotopy types amongst the fibers $(B_\bF(\pi^{-1}(y;t)))_{(y;t) \in Y \times \bbR_+^r}$.
\end{theorem-appendix}

\begin{theorem-appendix}\cite[Theorem 14.2.4]{HL}
\label{prop:finite-simplicial}
Let $V$ be a quasi-projective variety defined over $K$, and $X$ an $\bF$-definable subset of $V$. Then there exists a sequence of 
finite simplicial complexes $(X_i)_{i \in \bbN}$ embedded in $B_\bF(X)$ of dimension $\leq \dim(V)$, deformation retractions  $\pi_{i,j}: X_i \rightarrow X_j, j < i$, and deformation retractions $\pi_i: B_\bF(X) \rightarrow X_i$, such that
$\pi_{i,j} \circ \pi_i = \pi_j$ and the canonical map $B_\bF(X) \rightarrow \varprojlim_{i} X_i$ is a homeomorphism.
\end{theorem-appendix}

As an immediate consequence of Theorem \ref{prop:finite-simplicial} we have using the same notation:
\begin{corollary-appendix}
\label{cor:finite-simplicial}
Let $V \subset \bbA_K^N$ be a closed affine subvariety, and let $B_\bF(X)$ be a semi-algebraic subset of $V$.
\begin{enumerate}[(a)]
\item 
\label{itemlabel:cor:finite-simplicial:a}
Every connected component of  $B_\bF(X)$ is path connected.
\item
\label{itemlabel:cor:finite-simplicial:b}
$\HH^i(B_{\bF}(X)) = 0$ for $i > \dim(V)$.
\item
\label{itemlabel:cor:finite-simplicial:c}
$\dim \HH^*(B_\bF(X)) < \infty$.
\item
\label{itemlabel:cor:finite-simplicial:d}
The restriction homomorphism $\HH^{\dim(V)}(B_\bF(V)) \rightarrow \HH^{\dim(V)}(B_\bF(X)) $ is surjective.
\end{enumerate}
\end{corollary-appendix}
Note that Parts \eqref{itemlabel:cor:finite-simplicial:a}, \eqref{itemlabel:cor:finite-simplicial:b} and 
\eqref{itemlabel:cor:finite-simplicial:c}
 follow directly from Theorem~\ref{prop:finite-simplicial}.
 
\begin{proof}[Proof of Part \eqref{itemlabel:cor:finite-simplicial:d}]
Recall the definition of $\Cube_V(R)$ (cf. Notation~\ref{not-closed-cube}) and that 
$\Cube_V(R)$  is a compact topological space. Similar remarks apply to $\Cube_V(R) \cap X$. Moreover, arguing as in Part \eqref{itemlabel:main:lem:6} of Lemma  \ref{main:lem}, for sufficiently large $R$ the natural inclusions $\Cube_V(R) \cap X \hookrightarrow B_{\bF}(X)$ and $\Cube_V(0,R) \hookrightarrow B_{\bF}(V)$ induce homotopy equivalences.
Therefore, it is sufficient to prove that for all sufficiently large $R > 0$ the natural induced morphism
\[
\HH^{\dim(V)}(\Cube_V(R)) \rightarrow \HH^{\dim(V)}(\Cube_V(R) \cap X)
\]
is surjective.\\

By Theorem \ref{prop:finite-simplicial}, $B_\bF(V)$ and hence $\Cube_V(R)$ 
for every $R >0$,
has the homotopy type of a finite simplicial polyhedron of dimension at most $\dim(V)$.
Since $\Cube_V(R)$ is compact, it follows that the cohomological dimension (in the sense of \cite[page 196, Definition 9.4]{Iversen}) of $\Cube_V(R)$ is $\leq \dim(V)$.\\

It follows again from Theorem \ref{prop:finite-simplicial} that there exists a compact polyhedron $Z \subset \Cube_V(R) \cap X$ (for sufficiently large $R>0$) such that $Z$ is a deformation retract of $B_\bF(X)$. Let $\iota: Z \hookrightarrow \Cube_V(R) \cap X$ be the inclusion map. Note that $\iota$ induces isomorphisms in cohomology.
Since the inclusion of $Z$  in $\Cube_V(R)$ factors through $\iota$, 
and $\iota$ induces isomorphisms in cohomology,
it follows (using the long exact sequence of cohomology for pairs) that
\[
\HH^*(\Cube_V(R),\Cube_V(R) \cap X) \cong \HH^*(\Cube_V(R),Z).
\]

We now prove that 
\[
\HH^{\dim(V)+1}(\Cube_V(R),\Cube_V(R) \cap X) \cong \HH^{\dim(V)+1}(\Cube_V(R),Z) = 0.
\]
This gives the desired result by an application of the long exact sequence in cohomology associated to the pair
$(\Cube_V(R),\Cube_V(R) \cap X)$.\\

Recall that $\Cube_V(R)$ is a compact space, and consequently  that $Z$ is a closed
subspace of $\Cube_V(R)$.
It follows now \cite[page 198, Proposition 9.7]{Iversen} that the cohomological dimension of $U := (B_\bF(V) \cap \Cube_V(R)) \setminus Z$ is also  $\leq \dim(V)$.
This implies that $\HH^{\dim(V)+1}_c(U) \cong \HH^{\dim(V)+1}(\Cube_V(R),Z) =0$, which finishes the proof.
\end{proof}

\section*{Acknowledgments} 
S.B.  thanks Institut Henri Poincar\'{e}, Paris,  for hosting him during the trimester
on ``Model Theory, Combinatorics and Valued fields''  where  part of this work was done. Both authors thank the anonymous referees for very detailed reports which helped to substantially improve the paper.

\bibliographystyle{amsalpha}

\bibliography{master}

\end{document}